\let\cite\citep
\definecolor{highlight}{HTML}{4F7CB0}
\definecolor{pass-through}{HTML}{00346F}
\definecolor{inverse-pass-through}{HTML}{4F7CB0}
\definecolor{self-loop}{HTML}{C95100}
\definecolor{inverse-self-loop}{HTML}{FC9432}
\theoremstyle{definition}
\newtheorem{definition}{Definition}[section]
\theoremstyle{plain}
\newtheorem{theorem}[definition]{Theorem}
\newtheorem*{theoremNoNb}{Theorem}
\newtheorem{lemma}[definition]{Lemma}
\newtheorem*{lemmaNoNb}{Lemma}
\newtheorem{corollary}[definition]{Corollary}
\newtheorem*{corollaryNoNb}{Corollary}
\theoremstyle{remark}
\newtheorem{remark}[definition]{Remark}
\theoremstyle{plain}
\newglossaryentry{formula}{name=formula,
                           description={A mathematical expression}}
\newacronym{wrt}{w.r.t.}{with respect to}
\newacronym{EU}{EU}{European Union}
\newacronym{iid}{iid}{independent and identically distributed}
\newacronym{iff}{iff}{if and only if}
\newacronym{wlog}{w.l.o.g.}{without loss of generality}
\newacronym{Wlog}{W.l.o.g.}{Without loss of generality}
\newacronym{SLE}{SLE}{system of linear equations}
\newacronym{SNLE}{SNLE}{system of non-linear equations}
\newacronym{CDF}{CDF}{cumulative distribution function}
\newacronym{OP}{OP}{optimization problem}
\newacronym{AI}{AI}{artificial intelligence}
\newacronym{XAI}{XAI}{explainable artificial intelligence}
\newacronym{ML}{ML}{machine learning}
\newacronym{DL}{DL}{deep learning}
\newacronym{GDL}{GDL}{geometric deep learning}
\newacronym{SOTA}{SOTA}{state of the art}
\newacronym{TP}{TP}{true positives}
\newacronym{FP}{FP}{false positives}
\newacronym{FN}{FN}{false negatives}
\newacronym{TN}{TN}{true negatives}
\newacronym{ACC}{ACC}{accuracy}                     
\newacronym{TPR}{TPR}{true positive rate}           
\newacronym{FPR}{FPR}{false positive rate}          
\newacronym{FNR}{FNR}{false negative rate}          
\newacronym{TNR}{TNR}{true negative rate}           
\newacronym{PPV}{PPV}{positive predictive value}    
\newacronym{FDR}{FDR}{false discorvery rate}        
\newacronym{FOR}{FOR}{false omission rate}          
\newacronym{NPV}{NPV}{negative predictive value}    
\newacronym{ROC}{ROC-curve}{receiver operating characteristic curve}
\newacronym{AUC}{AUC}{area under the (ROC) curve}
\newacronym{IG}{IG}{information gain}
\newacronym{MSE}{MSE}{mean squared error}
\newacronym{AE}{AE}{absolute error}
\newacronym{MAE}{MAE}{mean absolute error}
\newacronym{MRAE}{MRAE}{mean relative absolute error}
\newacronym{AC}{AC}{absolute cost}
\newacronym{MAC}{MAC}{mean absolute cost}
\newacronym{DI}{DI}{disparate impact}
\newacronym{DP}{DP}{demographic parity}
\newacronym{EOs}{EOs}{equalized odds}
\newacronym{EO}{EO}{equal opportunity}
\newacronym{SVM}{SVM}{support vector machine}
\newacronym{MLP}{MLP}{multi layer perceptron}
\newacronym{LLM}{LLM}{large language models}
\newacronym{NN}{NN}{neural network}
\newacronym{PINN}{PINN}{physics-informed neural network}
\newacronym{GNN}{GNN}{graph neural network}
\newacronym{MPNN}{MPNN}{message passing neural network}
\newacronym{GCN}{GCN}{graph convolutional network}
\newacronym{PI-GNN}{PI-GNN}{physics-informed graph neural network}
\newacronym{PI-GCN}{PI-GCN}{physics-informed graph convolutional network}
\newacronym{ADR-GNN}{ADR-GNN}{advection-diffusion-reaction graph neural network}
\newacronym{QAMP}{QAMP}{quantum advection message passing}
\newacronym{MeGA-MP}{MeGA-MP}{metric graph advection message passing}
\newacronym{WL}{WL}{Weisfeiler-Lehmann}
\newacronym{WDN}{WDN}{water distribution network}
\newacronym{WDS}{WDS}{water distribution system}
\newacronym{ERC}{ERC}{European Research Council}
\newacronym{PI-Algo}{PI-algorithm for hydraulic state reconstruction}{physics-informed algorithm for hydraulic state reconstruction}
\newacronym{ODE}{ODE}{ordinary differential equation}
\newacronym{PDE}{PDE}{partial differential equation}
\newacronym{ADR}{ADR}{advection-diffusion-reaction}
\newacronym{ADR-PDE}{ADR-PDE}{advection-diffusion-reaction partial differential equation}
\newacronym{MoC}{MoC}{Method of Characteristics}
\newcommand{\n}[1]{{n_{\text{#1}}}}                     
\newcommand{\N}{\mathbb{N}}                             
\newcommand{\R}{\mathbb{R}}                             
\newcommand{\1}{\mathbb{1}}                             
\newcommand{\Idm}[1]{\mathbf{1}_{#1 \times #1}}         
\newcommand{\Zerom}[2]{\mathbf{0}_{#1 \times #2}}       
\newcommand{\Dm}{\mathbf{D}}                            
\newcommand{\zerom}[1]{{\mathbf{0}_{#1}}}               
\newcommand{\B}{\mathcal{B}}                            
\newcommand{\xm}{\mathbf{x}}                            
\newcommand{\hm}{\mathbf{h}}                            
\newcommand{\bm}{\mathbf{b}}                            
\newcommand{\G}{\mathcal{G}}                            
\newcommand{\Nbh}{\mathcal{N}}                          
\newcommand{\Nbhin}{{\Nbh_{\text{in}}}}                 
\newcommand{\Nbhout}{{\Nbh_{\text{out}}}}               
\newcommand{\Bm}{\mathbf{B}}                            
\newcommand{\Bmtilde}{\tilde{\Bm}}                      
\newcommand{\dia}{\rho}                                 
\newcommand{\dm}{\mathbf{d}}                            
\newcommand{\qm}{\mathbf{q}}                            
\newcommand{\Vr}{{V_{\text{r}}}}                        
\newcommand{\Vc}{{V_{\text{c}}}}                        
\newcommand{\Ei}{{E_{\text{i}}}}                        
\newcommand{\Ed}{{E_{\text{d}}}}                        
\newcommand{\Vh}{{V_{\text{h}}}}                        
\newcommand{\Vd}{{V_{\text{d}}}}                        
\newcommand{\Eq}{{E_{\text{q}}}}                        
\DeclareMathOperator*{\argmax}{argmax}                  
\DeclareMathOperator{\sgn}{sgn}                         
\DeclareMathOperator{\im}{im}                           
\DeclareMathOperator{\rk}{rk}                           
\DeclareMathOperator{\id}{id}                           
\title{
    \Large 
    \textbf{Existence and Uniqueness of Physically Correct Hydraulic States in Water Distribution Systems}
    \\
    \small
    \textbf{A theoretical analysis on the solvability of non-linear systems of equations in the context of water distribution systems}
}
\author{
    \small
    Janine Strotherm, 
    Julian Rolfes 
    and
    Barbara Hammer
}
\affil{
    \small
    Bielefeld University
    \\
    \texttt{\{jstrotherm,bhammer\}@techfak.uni-bielefeld.de}
    \\
    \texttt{julian.rolfes@uni-bielefeld.de}
}
\date{
    \tiny 
}
\begin{document}

\maketitle


\begin{abstract}
Planning and extension of water distribution systems (WDSs) plays a key role in the development of smart cities, driven by challenges such as urbanization and climate change. 
In this context, the correct estimation of physically correct hydraulic states, i.e., pressure heads, water demands and water flows, is of high interest. 
Hydraulic simulators such as EPANET or more recently, physic-informed surrogate models are used to solve this task.
They require a subset of observed states, such as heads at reservoirs and water demands, as inputs to estimate the whole hydraulic state.
In order to obtain reliable results of such simulators, but also to be able to give theoretical guarantees of their estimations, an important question is whether theoretically, the subset of observed states that the simulator requires as an input suffices to derive the whole state, purely based on the physical properties, also called hydraulic principles, it obeys. 
This questions translates to solving linear and non-linear systems of equations.
Previous articles mainly investigate on the existence question under the term \textit{observability analysis}, however, they rely on the approximation of the non-linear principles using Taylor approximation and on network-dependent numerical or algebraic algorithms that need to be solved in order to give answer to the question whether a subset of observed states determines the whole state. 
In this work, we provide purely theoretical guarantees on the existence and uniqueness of solutions to the non-linear hydraulic principles, and by this, the existence and uniqueness of physically correct states, given a variety of common subsets of them -- a result that seems to be common-sense in the water community but has never been rigorously proven. We show that previous existence results are special cases of our more general findings, and therefore lay the foundation for further analysis and theoretical guarantees of the before-mentioned hydraulic simulators.
\end{abstract}

\noindent \textbf{Keywords:} Water Distribution Systems \textbf{$\cdot$} Hydraulic Principles \textbf{$\cdot$} Hydraulic State Estimation \textbf{$\cdot$} Observability Analysis \textbf{$\cdot$} Hydraulic Simulator \textbf{$\cdot$} Surrogate Model \textbf{$\cdot$} EPANET 



\setcounter{section}{0} 

\setcounter{page}{1} 
\pagenumbering{arabic}



\section{Introduction}
\label{section_Introduction}

The correct estimation of hydraulic states in a \gls{WDS} is a crucial requirement in several tasks and challenges related to such network \cite{Tshehla2017WaterDomain_WaterHydraulics_Overviews}. 
For example, for planning and extension of a \gls{WDS}, \textit{water demands} are of interests. These are connected to \textit{water flows} through every pipe and \textit{hydraulic heads} at every node in the network. The relation between theses hydraulic states are governed by hydraulic principles \cite{Todini2013WaterDomain_StateSimulation_UnifiedFramework,Rossman2020WaterDomain_StateSimulation_Epanet2.2}. 

Since \glspl{WDS} underlie a high security standard and are hardly accessible, experimenting with the system directly is not possible. Instead, scientist and engineers rely on hydraulic simulators, which input a subset of the hydraulic states at a given time and output an estimate of the remaining hydraulic states.
The most prominent hydraulic simulator is EPANET \cite{Rossman2020WaterDomain_StateSimulation_Epanet2.2}, a numeric solver that estimates heads and flows in the whole \gls{WDS} based on heads at reservoirs and demands at all (consumer) nodes.
A more recent emulator is based on the advances of physics-informed \gls{ML}: \citet{Ashraf2024WaterDomain_WaterHydraulics_StateEstimationArchitechtures_StateEstimation} propose a \gls{PI-GCN} as a replacement for EPANET.

When analyzing such simulators from a theoretical perspective, a crucial question is whether theoretically, the input to the simulator, i.e., the subset of observed states, suffices to determine all states of the system in a meaningful way. 
Ideally, in order to derive reliable conclusions from such states, they should be uniquely determined by the subset of known states required as an input. Moreover, the outputted states should obey the before-mentioned underlying physics of the system. 
In \glspl{WDS}, these hydraulic principles translate to non-linear equations between heads and flows and to linear equations between flows and demands \cite{Todini2013WaterDomain_StateSimulation_UnifiedFramework,Rossman2020WaterDomain_StateSimulation_Epanet2.2}. In dependence of what subset of states is known, they translate to a \gls{SLE} or a \gls{SNLE}. The above  questions then go hand in hand with the question whether a solution to such system exists and if so, whether it is unique. 

In the realm of proposing the gradient-based algorithm that lies the foundation of EPANET, \citet{Todini1988WaterDomain_WaterHydraulics_Theory_GradientMethod} discuss this question in the setting where reservoir heads and consumer demands are known by transforming the hydraulic equations to a dual Lagrangian \gls{OP} and analyzing the dual problem.
Moreover,
related questions have so far been investigated on under the name of \textit{observability analysis} \cite{Nagar2012WaterDomain_WaterHydraulics_Theory_ObservabilityAnalysis,Diaz2015WaterDomain_WaterHydraulics_Theory_AlgebraicObservabilityAnalysis,Diaz2017WaterDomain_WaterHydraulics_Theory_TopologicalObservabilityAnalysis},
aiming for strategies \enquote{that evaluate[s] if the measurement set available is sufficient to compute the current state of the network} \cite{Diaz2017WaterDomain_WaterHydraulics_Theory_TopologicalObservabilityAnalysis}.
All these works rely on the linearization of the non-linear hydraulic principles using Taylor approximation. The resulting approximated system of equations is a \gls{SLE} determined by the Jacobian of the hydraulic principles expressed as a function of the heads.

While \citet{Nagar2012WaterDomain_WaterHydraulics_Theory_ObservabilityAnalysis} mainly present a numerical method that outputs ellipsoids which can be interpreted as estimated confidence bounds on the heads under measurement and parametric uncertainty, they shortly discuss requirements on the Jacobian appearing from the Taylor approximation for the solvability of the linearized systems of equations. 

\citet{Diaz2015WaterDomain_WaterHydraulics_Theory_AlgebraicObservabilityAnalysis} investigate on identifying \textit{state variables}, i.e., subsets of heads, demand and/or flows, that -- once observed -- allow to determine the remaining states from that subset based on the hydraulic principles.\footnote{
    Note that \citet{Diaz2015WaterDomain_WaterHydraulics_Theory_AlgebraicObservabilityAnalysis} define such a subset as \textit{state variable} -- a naming that we avoid since we in general call all magnitudes such as heads, demands and flows \textit{states} or \textit{hydraulic states}.
}
They present an algebraic algorithm based on Gauss elimination applied to the before-mentioned Jacobian to answer this question.
The Jacobian and therefore, the whole algorithm, depends on the network topology, i.e., the \gls{WDS}, as well as the subset of known states. Moreover, in order to be able to compute the Jacobian, a reasonable guess for all heads is required, which usually requires domain knowledge. 
\citet{Diaz2017WaterDomain_WaterHydraulics_Theory_TopologicalObservabilityAnalysis} extend the methodology of \citet{Diaz2015WaterDomain_WaterHydraulics_Theory_AlgebraicObservabilityAnalysis} by integrating knowledge of pumps and valves in the \gls{WDS}.

In this work, we present a purely theoretical analysis of the non-linear hydraulic principles which is independent on the specific network topology and gives strong guarantees on the existence and uniqueness of physically correct hydraulic states given a variety of common subsets of them. 
Our analysis neither relies on the linearization of the non-linearity nor requires the execution of any (numerical or algebraic) algorithm.
While there exists work on the setting where reservoir heads and consumer demands are known (comparable to the results of our Theorem \ref{theorem_ExistenceAndUniquenessOfHydraulicStates_ReservoirHeadsAndConsumerDemands}) based on the Lagrangian dual of the primal problem, we provide a more general analysis of multiple settings while avoiding a dual formalization.
Therefore, our work depicts a theoretically founded answer to questions which -- as we noticed during our literature review -- the water community had already considered settled. Moreover, it constitutes a possibility to further theoretical guarantees of hydraulic simulators that are affected directly by the question of the existence of a unique solution to the task they try to solve numerically.

Our work is structured as follows:
In Section \ref{section_WaterDistributionSystemsAsSymmetricDirectedGraphs}, we introduce \glspl{WDS} formally. 
Consequently, in Section \ref{section_WaterHydraulics}, we investigate on water hydraulics, i.e., the hydraulic principles which define physically correct hydraulic states (Section \ref{subsection_WaterHydraulics_HydraulicPrinciples}), and on the existence and uniqueness of such, given a subset of them (Section \ref{subsection_ExistenceAndUniquenessOfHydraulicStates}).
Finally, in Section \ref{section_Conclusion}, we summarize our findings.

\section{Water Distribution Systems as Symmetric Directed Graphs}
\label{section_WaterDistributionSystemsAsSymmetricDirectedGraphs}

A \gls{WDS} is a network that distributes water through \textit{pipes} from water sources (\textit{reservoirs}) to \textit{consumers}. 
This can be modeled as a heterogeneous graph.

\begin{definition}[Water distribution system]
\label{definition_WaterDistributionSystem_SymmetricDirected}
    A \gls{WDS} is a finite, connected and symmetric directed\footnote{
        That is, if $e_{vu} \in E$ holds, then $e_{uv} \in E$ holds as well.
    } 
    graph $\G = (V,E)$ consisting of nodes $V = \{v_1,\dots,v_\n{n}\} \neq \emptyset$ and edges $E = \{e_1,\dots,e_{\n{e}}\} \neq \emptyset$, in which the nodes $V = \Vr \sqcup \Vc$ can be separated into $\n{r} \in \N$ reservoir nodes $\Vr \neq \emptyset$ that supply water to the $\n{c} \in \N$ consumer nodes $\Vc = V \setminus \Vr \neq \emptyset$ through the pipes $E$.
\end{definition}

\noindent 
Reservoirs correspond to the water sources and can be thought as a storage able to provide an infinite amount of water to the system, such as a lake, while consumer junctions may draw water from the network, determined by its demand. The demand not only allows a formal distinction between reservoir nodes $\Vr$ and consumer nodes $\Vc$, but is one of the magnitudes that determines the \textit{hydraulic state} of a \gls{WDS}. 
Next to the \textit{water quality}, the \textit{water hydraulics} is one of the two main dynamics in a \gls{WDS} and the one we will analyze theoretically in the next section.

\section{Water Hydraulics}
\label{section_WaterHydraulics}

Next to its nodes and egdes, a \gls{WDS} is characterized by its \textit{hydraulic state}.

\begin{definition}[Hydraulic states]
\label{definition_HydraulicStates}
    The hydraulic states of a \gls{WDS} $\G = (V,E)$ at time $t \in \R$ corresponds to a triple $(\hm(t),\qm(t),\dm(t))$ consisting of 

    \begin{itemize}
        \item hydraulic heads, or simply \textit{heads}, $\mathbf{h}(t) = (h_v(t))_{v \in V} \in \R_{\geq 0}^{\n{n}}$ at every node,

        \item water flows, or simply \textit{flows}, $\mathbf{q}(t) = (q_e(t))_{e \in E} \in \R^\n{e}$ through every edge and
        
        \item water demands, or simply \textit{demands}, $\mathbf{d}(t) = (d_v(t))_{v \in \Vc} \in \R^{\n{c}}$ at every consumer node.
    \end{itemize}    

    \noindent While we also call single values and subsets of heads, demands and/or flows \textit{hydraulic states}, we sometimes refer to \textit{all} hydraulic states, i.e., the triple $(\hm(t),\qm(t),\dm(t))$, as the \textit{whole hydraulic state}, or in short, just the \textit{hydraulic state}.
\end{definition}

\begin{remark}[Convention of flow signs]
\label{remark_ConventionOfFlowSigns}
    For an edge $e_{vu} \in E$ from $v \in V$ to $u \in \Nbh(v)$ and a time $t \in \R$,
    we use the convention that 
    $q_{vu}(t) := q_{e_{vu}}(t) > 0$ or equivalently, $\sgn(q_{vu}(t)) = 1$ holds 
    \gls{iff} 
    the water flow direction aligns with the edge direction, 
    that is, \gls{iff}
    there is a physical flow from $v$ to $u$.
    Accordingly,
    we use the convention that
    $q_{vu}(t) := q_{e_{vu}}(t) < 0$ or equivalently, $\sgn(q_{vu}(t)) = -1$ holds 
    \gls{iff} 
    the water flow direction is in opposite direction to the edge direction, 
    that is, \gls{iff}
    there is a physical flow from $u$ to $v$.
    If there is no flow between $v \in V$ and $u \in \Nbh(v)$, 
    then $q_{vu}(t) = 0$ or equivalently, $\sgn(q_{vu}(t)) = 0$ holds.
\end{remark}

\noindent 
The introduction of the hydraulic states allows the formal definition
of different kinds of neighborhoods:

    


\begin{definition}[Neighborhoods]
\label{definition_Neighborhoods}

    Let $\G = (V,E)$ be a \gls{WDS} with hydraulic state $(\hm(t),\qm(t),\dm(t))$ at time $t \in \R$.
    Let $v \in V$.
    We denote the inflow, outflow and no-flow neighborhood by
    \begin{align*}
        \Nbh_-(v,t) 
        :=&
        \{ u \in \Nbh(v) ~|~ \sgn(q_{vu}(t)) = - 1 
        \}
        \\
        =& 
        \{ u \in \Nbh(v) ~|~ \text{water flows out from $u$ \textit{in}to $v$ at time $t$} \}
        ,
        \\
        \Nbh_+(v,t) 
        :=&
        \{ u \in \Nbh(v) ~|~ \sgn(q_{vu}(t)) = + 1 
        \}
        \\
        =& 
        \{ u \in \Nbh(v) ~|~ \text{water flows \textit{out} from $v$ into $u$ at time $t$} \}
        \text{ and}
        \\
        \Nbh_0(v,t) 
        :=&
        \{ u \in \Nbh(v) ~|~ \sgn(q_{vu}(t)) = 0 
        \\
        =& 
        \{ u \in \Nbh(v) ~|~ \text{no water flows between $v$ and $u$ at time $t$} \},
    \end{align*}

    \noindent 
    respectively.
\end{definition}


\noindent 
In this section, we investigate on the understanding (Section \ref{subsection_WaterHydraulics_HydraulicPrinciples}) as well as existence and uniqueness (Section \ref{subsection_ExistenceAndUniquenessOfHydraulicStates}) of physically correct hydraulic states, which are governed by hydraulics principles.
Intuitively, the hydraulic state of a \gls{WDS} changes in response to changing demands $\dm(t)$ at consumer nodes $\Vc$ and over time $t \in \R$. 
This demand is provided by the reservoirs $\Vr$ through the change of flows $\qm(t)$ along the edges $E$ and over time $t \in \R$. 
The change of flows, in turn, are a response to a change of the heads $\hm_\Vc(t) = (h_v(t))_{v \in \Vc}$ at consumer nodes $\Vc$ and over time $t \in \R$ due to the change of demand.
In contrast, by the huge size and depth of a reservoir (e.g., a lake), heads $\hm_\Vr(t) = (h_v)_{v \in \Vr}$ at reservoir nodes $\Vr$ can be considered as constant over time $t \in \R$.
In either way, the hydraulic principles describe the \textit{steady state system} of these dynamics. This state is time-independent in the sense that as long as none of the hydraulic states above change, none of the others do. Therefore, when only considering the hydraulics of the system, we omit the time dependency for simplicity.


\subsection{Hydraulic Principles}
\label{subsection_WaterHydraulics_HydraulicPrinciples}

In this subsection, we introduce the three hydraulic principles, which are
the conservation of energy (Definition \ref{definition_HydraulicPrinciples_ConservationOfEnergy}),
the conservation of mass (Definition \ref{definition_HydraulicPrinciples_ConservationOfMass}) and
the conservation of flows (Definition \ref{definition_HydraulicPrinciples_ConservationOfFlows}),
formally. 
All together, they connect all hydraulic states of the whole hydraulic state (cf. Definition \ref{definition_HydraulicStates}).

The \textit{conservation of energy} links the heads to the flows. 
Intuitively, water flows along decreasing heads, that is, if two neighboring nodes have two different heads, water flows from the node with the larger head to the node with the lower head. 
If two neighboring nodes have the same head, there is no flow in between them.
As a consequence of friction along pipes in between two nodes, the overall energy which can be inferred from the heads can not be fully transferred from one node to the other. 
The \textit{Hazen-Williams equation} estimates the head loss caused by this phenomenon:

\begin{definition}[Conservation of energy \cite{Todini2013WaterDomain_StateSimulation_UnifiedFramework}]
\label{definition_HydraulicPrinciples_ConservationOfEnergy}
    The change in between hydraulic heads $h_v$ and $h_u$ between two neighboring nodes $v \in V$ and $u \in \Nbh(v)$, also called \textit{head loss}, is linked to the flow $q_{vu} := q_{e_{vu}}$ along the edge $e_{vu} \in E$ between them by
    \begin{align}
    \label{align_HydraulicPrinciples_ConservationOfEnergy}
        h_v - h_u 
        = 
        r_{vu} \sgn(q_{vu}) |q_{vu}|^x
        =
        r_{vu} q_{vu} |q_{vu}|^{x-1},
    \end{align}
    
    \noindent where $x = 1.852$ is the Hazen-Williams constant and
    $
    r_{vu} = 
    10.67 ~ l_{vu} ~ \dia_{vu}^{-4.8704} ~ \eta_{vu}^{-1.852}
    > 0
    $
    %
    is the pipe-dependent resistance coefficient with symmetric pipe features length $l_{vu} = l_{uv} > 0$, diameter $\dia_{vu} = \dia_{uv} > 0$ and roughness coefficient $\eta_{vu} = \eta_{uv} > 0$.    
\end{definition}

\begin{remark}[Convention of flow signs aligns with physics]
\label{remark_ConventionOfFlowSignsAlignsWithPhysics}
    Since from a physical perspective, water flows along decreasing heads, 
    water \text{outflows} (\text{inflows}) from (to) a node $v \in V$ to (from) a neighboring node $u \in \Nbh(v)$ \gls{iff} $h_v > h_u$ ($h_v < h_u$) holds. 
    Consequently, since by Equation \eqref{align_HydraulicPrinciples_ConservationOfEnergy}, 
    $\sgn(h_v - h_u) = \sgn(q_{vu})$ holds (cf. proof of Theorem \ref{theorem_ExistenceAndUniquenessOfHydraulicStates_ReservoirHeadsAndConsumerHeads}), 
    $q_{vu}$ is an outflow 
    \gls{iff}
    $q_{vu}$ has a positive sign $\sgn(q_{vu}) = \sgn(h_v - h_u) = 1$.
    Analogously, 
    $q_{vu}$ is an inflow
    \gls{iff}
    $q_{vu}$ has a negative sign $\sgn(q_{vu}) = \sgn(h_v - h_u) = - 1$.
    Finally, 
    if there is no flow, that is, if $h_v = h_u$ holds, $q_{vu}$ must be zero with $\sgn(q_{vu}) = \sgn(h_v - h_u) = 0$.  
    Therefore, the convention of flow signs introduced in Remark \ref{remark_ConventionOfFlowSigns} align with the physics.
\end{remark}

\noindent 
The \textit{conservation of mass} links the flows to the demands. 
Intuitively, the amount of the net inflow to a node (that is, the inflow minus its outflow) needs to compensate for the demand at that node:

\begin{definition}[Conservation of mass \cite{Todini2013WaterDomain_StateSimulation_UnifiedFramework}]
\label{definition_HydraulicPrinciples_ConservationOfMass}
    The sum of absolute inflows to a node $v \in \Vc$ 
    minus
    the sum of the outflows from that node
    is equal to
    the demand $d_v$ at that node, 
    or, equivalently,
    the sum of flows at a node $v \in \Vc$ 
    is -- up to the sign -- equal to
    the demand $d_v$ at that node:
    \begin{align}
    \label{align_HydraulicPrinciples_ConservationOfMass}
         \sum_{u \in \Nbh_-(v)} 
         \underbrace{(-q_{vu})}_{> 0} 
         -
         \sum_{u \in \Nbh_+(v)} 
         \underbrace{q_{vu}}_{> 0} 
         = 
         d_v
        \quad \iff \quad
        \sum_{u \in \mathcal{N}(v)} q_{vu} 
        = 
        - d_v.
    \end{align}
\end{definition}

\noindent The \textit{conservation of flows} links the two flows of the two edges that connect the same two nodes, but which show in opposite directions (and which exist since we model the \gls{WDS} as a symmetric directed graph):

\begin{definition}[Conservation of flows \cite{Todini2013WaterDomain_StateSimulation_UnifiedFramework}]
\label{definition_HydraulicPrinciples_ConservationOfFlows}
    For two neighboring nodes $v \in V$ and $u \in \Nbh(v)$, the flow $q_{uv}$ from node $u$ to $v$ is equal to the negative of the flow $q_{vu}$ from $v$ to $u$:
    \begin{align}
    \label{align_HydraulicPrinciples_ConservationOfFlows}
        q_{uv} = - q_{vu}.
    \end{align}
\end{definition}

\noindent
The conservation of flows allows to rewrite the inflow, outflow and no-flow neighborhoods.

\begin{corollary}(Neighborhoods)
\label{corollary_Neighborhoods}
    Let $\G = (V,E)$ be a \gls{WDS} with hydraulic state $(\hm,\qm,\dm)$ that satisfies the conservation of flows (Definition \ref{definition_HydraulicPrinciples_ConservationOfFlows}).
    Let $v \in V$.
    Then the inflow, outflow and no-flow neighborhood by are given by
    \begin{align*}
        \Nbh_\pm(v) 
        :=&
        \{ u \in \Nbh(v) ~|~ \sgn(q_{vu}) = \pm 1 \}
        =
        \{ u \in \Nbh(v) ~|~ \sgn(q_{uv}) = \mp 1 \}
        \text{ and}
        \\
        \Nbh_0(v) 
        :=&
        \{ u \in \Nbh(v) ~|~ \sgn(q_{vu}) = {\color{white} \pm} 0 \}
        =
        \{ u \in \Nbh(v) ~|~ \sgn(q_{uv}) = {\color{white} \pm} 0 \}
        .
    \end{align*}
\end{corollary}

\begin{remark}[Convention of demand signs]
    The conservation of mass (Definition \ref{definition_HydraulicPrinciples_ConservationOfMass}) emphasizes that 
    for a node $v \in V$, the demand $d_v$ is nothing else but another flow drawn from or injected to the system. 
    Together with the conservation of flows (Definition \ref{definition_HydraulicPrinciples_ConservationOfFlows}), 
    Equation \eqref{align_HydraulicPrinciples_ConservationOfMass} becomes
    \begin{align*}
        \underbrace{
        \sum_{u \in \Nbh_-(v)} 
        \underbrace{q_{uv}}_{> 0} 
        }_{
        \text{(in)flow to $v$}
        }
        -
        \underbrace{
        \sum_{u \in \Nbh_+(v)} 
        \underbrace{q_{vu}}_{> 0} 
        }_{
        \text{(out)flow from $v$}
        }
        = 
        d_v.
    \end{align*}

    \noindent
    It emphasizes that 
    $d_v > 0$ holds \gls{iff} the (pipe) inflow to $v$ is larger than the (pipe) outflow from $v$.
    In this case, the node $v$ draws water from the system and is another external outflow from $v$.
    Analogously,
    $d_v < 0$ holds \gls{iff} the (pipe) inflow to $v$ is smaller than the (pipe) outflow from $v$.
    In this case, the node $v$ supplies water to the system and is another external inflow to $v$.
    Both cases align with both the convention of flow signs (Remark \ref{remark_ConventionOfFlowSigns}) and the physics (Remark \ref{remark_ConventionOfFlowSignsAlignsWithPhysics}).
\end{remark}

\noindent 
Finally, given the definition of the three hydraulic principles, we want to call a triple $(\hm,\qm,\dm)$ of hydraulic states physically correct if they obey these principles.

\begin{definition}[Physically correct hydraulic states]
\label{definition_PhysicallyCorrectHydraulicStates}
    The hydraulic states $(\hm,\qm,\dm)$ of a \gls{WDS} $\G = (V,E)$ are called \textit{physically correct} \gls{iff} they satisfy 
    the conservation of energy (Definition \ref{definition_HydraulicPrinciples_ConservationOfEnergy}), 
    the conservation of mass (Definition \ref{definition_HydraulicPrinciples_ConservationOfMass}) and
    the conservation of flows (Definition \ref{definition_HydraulicPrinciples_ConservationOfFlows}),
    i.e.,
    the properties \eqref{align_HydraulicPrinciples_ConservationOfEnergy} to \eqref{align_HydraulicPrinciples_ConservationOfFlows}.
\end{definition}

\noindent In preparation of the analysis of the existence and uniqueness of physically correct hydraulic states given a subset of them, we will now collect some useful observations.

\begin{lemma}[Redundancy of conservation of flows]
\label{lemma_RedundancyOfHydraulicPrinciples_ConservationOfFlows}
    The hydraulic states $(\hm,\qm,\dm)$ of a \gls{WDS} $\G = (V,E)$ are \textit{physically correct} \gls{iff} they satisfy 
    the conservation of energy (Definition \ref{definition_HydraulicPrinciples_ConservationOfEnergy}) and
    the conservation of mass (Definition \ref{definition_HydraulicPrinciples_ConservationOfMass}),
    i.e.,
    the properties \eqref{align_HydraulicPrinciples_ConservationOfEnergy} and \eqref{align_HydraulicPrinciples_ConservationOfMass}.
\end{lemma}

\begin{remark}[Reduncancy of conservation of flows]
\label{remark_RedundancyOfHydraulicPrinciples_ConservationOfFlows}
    A view in the proof of Lemma \ref{lemma_RedundancyOfHydraulicPrinciples_ConservationOfFlows} shows that it only holds if we model the \gls{WDS} as a \textit{symmetric directed} graph.
    In some scenarios, it helps to model the \gls{WDS} as an \textit{oriented} graph. In this case, the conservation of flows in not redundant anymore.
\end{remark}

\noindent In the next theorem, we derive the matrix formulation of the hydraulic principles. 
Hereby, we will stick to the convention of sub-scripting a set to a vector or a matrix if the vector or matrix is limited to that set.

\begin{theorem}[Matrix formulation of hydraulic principles]
\label{theorem_HydraulicPrinciples_MatrixFormulation}
    The hydraulic states $(\hm,\qm,\dm)$ of a \gls{WDS} $\G = (V,E)$ are physically correct \gls{iff}
    \begin{align}
    \label{align_HydraulicPrinciples_ConservationOfEnergy_MatrixForm}
        \Bm^T \hm &= \Dm \qm, \\
    \label{align_HydraulicPrinciples_ConservationOfMass_MatrixForm}
        \Bm_{\Vc \cdot} \qm &= - 2\dm
    \end{align}

    \noindent holds, where $\Bm$ is the incidence matrix
    \begin{align*}
        \Bm 
        :=
        \left(
        \1_{\{ \exists u \in V: ~ e = e_{vu} \}} - \1_{\{ \exists u \in V: ~ e = e_{uv} \}}
        \right)_{ \substack{v \in V \\ e \in E} }
        \in 
        \R^{\n{n} \times \n{e}}
        \text{ of } \G,
    \end{align*}

    \noindent $\Bm_{\Vc \cdot}$ is the incidence matrix $\Bm$ limited to only the rows corresponding to the consumer nodes $\Vc$
    and $\Dm = \Dm(\qm)$ is the flow-dependent diagonal matrix of nonlinearities
    \begin{align*}
        \Dm
        =
        \left(
        \delta_{e_1 e_2} \cdot r_{e_1} |q_{e_1}|^{x-1}
        \right)_{ \substack{e_1 \in E \\ e_2 \in E} }
    \end{align*}

    \noindent 
    with $\delta$ the Kronecker delta, $x$ the Hazen-Williams constant and $r_e$ and $q_e$ the resistance coefficient of and the flow along the edge $e \in E$, respectively (cf. Definition \ref{definition_HydraulicPrinciples_ConservationOfEnergy}).
\end{theorem}

\noindent The following theorem does not apply to \glspl{WDS} only, but to general finite, connected and directed graphs. 
Although we were expecting that the result must have been a well-establish (and proven) fact, to the best of our knowledge, we have not found any literature that indeed proves the following statement. 

\begin{theorem}[Rank of submatrices of the incidence matrix]
\label{theorem_RankOfSubmatricesOfTheIncidenceMatrix}
    Let $\G = (V,E)$ be a finite, connected and directed graph.
    Let $S \subsetneq V$ be a non-empty subset of nodes. 
    Let $\Bm \in \R^{\n{n} \times \n{e}}$ be the incidence matrix (as defined in Theorem \ref{theorem_HydraulicPrinciples_MatrixFormulation}).
    Then $\rk(\Bm_{S \cdot}) = |S|$ holds.
\end{theorem}

\noindent In view of Theorem \ref{theorem_HydraulicPrinciples_MatrixFormulation}, one can already guess that later on, we will be interested into applying Theorem \ref{theorem_RankOfSubmatricesOfTheIncidenceMatrix} for $S = \Vc$.

\begin{remark}[Decomposition of the incidence matrix]
\label{remark_DecompositionOfTheIncidenceMatrix}
    Theorem \ref{theorem_RankOfSubmatricesOfTheIncidenceMatrix} allows a formal proof to the fact that the submatrix $\Bm_{\Vc \cdot}$ ($-A_{21}$ in \citet{Todini2013WaterDomain_StateSimulation_UnifiedFramework}) has full rank of $\rk(\Bm_{\Vc \cdot}) = \n{c}$ ($n$ in \citet{Todini2013WaterDomain_StateSimulation_UnifiedFramework}) and that therefore, it is decomposable into two matrices $\Bm_{\Vc \cdot} = [\Bm_{\Vc \Ei}, \Bm_{\Vc \Ed}] \in \R^{\n{c} \times \n{e}}$ where $\Bm_{\Vc \Ei} \in \R^{\n{c} \times \n{c}}$ is an invertible matrix and $\Bm_{\Vc \Ed} \in \R^{\n{c} \times (\n{e} - \n{c})}$ ($-A_{2n}$ and $-A_{2l}$ in \cite{Todini2013WaterDomain_StateSimulation_UnifiedFramework}, respectively). This legitimates the existence of the invertible matrix $A_{2n}$ in the work of \citet{Todini2013WaterDomain_StateSimulation_UnifiedFramework}.
\end{remark}

\noindent 
We can summarize the findings in Remark \ref{remark_DecompositionOfTheIncidenceMatrix} formally into the following corollary.

\begin{corollary}[Decomposition of the incidence matrix]
\label{corollary_DecompositionOfTheInciddenceMatrix}
    Let $\G = (V,E)$ be a finite, connected and directed graph.
    Let $S \subsetneq V$ be a non-empty subset of nodes. 
    Let $\Bm \in \R^{\n{n} \times \n{e}}$ be the incidence matrix (as defined in Theorem \ref{theorem_HydraulicPrinciples_MatrixFormulation}).
    Then there exist two subsets $\Ei \subset E$ and $\Ed \subset E$ of edges, such that
    
    \begin{enumerate}
        \item $|\Ei| = |S| =: \n{s}$, $|\Ed| = |E| - |S| = \n{e} - \n{s}$ and $E = \Ei \sqcup \Ed$ hold,
        
        \item $\rk(\Bm_{S \cdot}) = \rk(\Bm_{S \Ei}) = \n{s}$ holds,
        
        \item $\Bm_{S \Ei} \in \R^{\n{s} \times \n{s}}$ is invertible and

        \item $\rk(\Bm_{\cdot \Ei}) = \n{s}$ holds.
    \end{enumerate} 

    \noindent 
    More precisely, 
    $\Ei$ is a subset of edges $E$ 
    whose corresponding columns $\Bm_{S \Ei}$ 
    in the limited incidence matrix $\Bm_{S \cdot}$ 
    are (a possible choice of) $\n{s} = |S|$ linearly \textbf{i}ndependent ones; 
    $\Ed = E \setminus \Ei$ is the subset of edges $E$
    whose corresponding columns $\Bm_{S \Ed}$ 
    in the incidence matrix $\Bm_{S \cdot}$ are the $\n{e} - \n{s} = |E| - |S|$ remaining linearly \textbf{d}ependent ones. 
    Even more, 
    $\Ei$ is also a subset of edges $E$
    whose corresponding columns $\Bm_{\cdot \Ei}$
    in the incidence matrix $\Bm$
    are $\n{s}$ of at most $\n{n} - 1$ linearly independent ones.
\end{corollary}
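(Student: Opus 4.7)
The plan is to derive the corollary as a direct consequence of Theorem \ref{theorem_RankOfSubmatricesOfIncidenceMatrix} together with elementary column selection. First I would apply that theorem to obtain $\rk(\Bm_{S \cdot}) = |S| =: \n{s}$. Since row rank equals column rank, the matrix $\Bm_{S \cdot}$ has $\n{s}$ linearly independent columns. I would let $\Ei \subset E$ be the set of edges corresponding to any such fixed choice of columns and set $\Ed := E \setminus \Ei$. This immediately gives $|\Ei| = \n{s}$, $|\Ed| = \n{e} - \n{s}$, and $E = \Ei \sqcup \Ed$, establishing item~(1).

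Next, by construction, $\Bm_{S \Ei}$ is the submatrix formed by the $\n{s}$ linearly independent columns of $\Bm_{S \cdot}$, so its column rank equals $\n{s}$; combined with $\rk(\Bm_{S \cdot}) = \n{s}$, this yields $\rk(\Bm_{S \cdot}) = \rk(\Bm_{S \Ei}) = \n{s}$, which is item~(2). Since $\Bm_{S \Ei}$ is a square $\n{s} \times \n{s}$ matrix of full rank, it is invertible, giving item~(3).

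For item~(4), I would observe that $\Bm_{S \Ei}$ is a $|S| \times \n{s}$ submatrix of the $\n{n} \times \n{s}$ matrix $\Bm_{\cdot \Ei}$ obtained by restricting to the rows indexed by $S$. Since deleting rows cannot increase the rank, $\rk(\Bm_{\cdot \Ei}) \geq \rk(\Bm_{S \Ei}) = \n{s}$; conversely, $\rk(\Bm_{\cdot \Ei}) \leq \n{s}$ because $\Bm_{\cdot \Ei}$ has only $\n{s}$ columns. Hence $\rk(\Bm_{\cdot \Ei}) = \n{s}$. The concluding remark that these $\n{s}$ independent columns sit inside a maximal set of at most $\n{n}-1$ independent columns of the full incidence matrix $\Bm$ is just the classical fact that, for a finite connected graph, the incidence matrix has rank $\n{n}-1$ (with the all-ones vector spanning the left kernel), which I would invoke rather than reprove.

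I do not expect a substantive obstacle: the statement is bookkeeping on top of the prior rank theorem, and the only place where a small amount of care is needed is to make sure that $\Bm_{S \Ei}$ is simultaneously viewed as the $\n{s}$ independent columns of $\Bm_{S \cdot}$ (for items (2) and (3)) and as the $|S|$-row restriction of $\Bm_{\cdot \Ei}$ (for item~(4)), so that the two chains of rank inequalities are about the same submatrix and thus compose cleanly.
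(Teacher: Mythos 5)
Your proposal is correct and follows essentially the same route as the paper: invoke Theorem \ref{theorem_RankOfSubmatricesOfIncidenceMatrix} to get $\rk(\Bm_{S\cdot})=\n{s}$, choose $\Ei$ as edges indexing a maximal independent set of columns of $\Bm_{S\cdot}$, and read off items (1)--(3). For item (4) you use the monotonicity of rank under row deletion, while the paper phrases the same fact as a contradiction (a dependency among the columns of $\Bm_{\cdot\Ei}$ would restrict to one among the columns of $\Bm_{S\Ei}$); these are logically the same argument, so there is nothing substantive to distinguish.
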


\noindent We have now collected some useful results for the imminent analysis: 
An important task in \glspl{WDS} is to estimate all physically correct state $(\hm,\qm,\dm)$ of a \gls{WDS} given a subset of them. This subset can also be considered as side constraints.

\begin{definition}[Physically correct hydraulic states under side constraints]
\label{definition_PhysicallyCorrectHydraulicStatesUnderSideConstraints}
    Let $\G = (V,E)$ be a \gls{WDS} and $\hm_\Vh^*, \qm_\Eq^*$ and $\dm_\Vd^*$ a subset of (known) hydraulic states on subsets $\Vh\subset V$, $\Eq \subset E$ and $\Vd \subset \Vc$, respectively.
    \\A set of hydraulic states $(\hm,\qm,\dm)$ is called \textit{physically correct \gls{wrt} these side constraints} \gls{iff} they are physically correct and satisfy the side constraints, i.e.,
    \begin{align*}
        h_v &= h_v^* \text{ for all } v \in \Vh \quad \text{and} \\
        q_e &= q_e^* \text{ for all } e \in \Eq \quad \text{and} \\
        d_v &= d_v^* \text{ for all } v \in \Vd \quad \text{hold}.\\
    \end{align*}
\end{definition}

\noindent For example, the well-known hydraulic simulator EPANET \cite{Rossman2020WaterDomain_StateSimulation_Epanet2.2}, but also more recent physics-informed and \gls{ML}-based surrogate models \cite{Ashraf2024WaterDomain_WaterHydraulics_StateEstimationArchitechtures_StateEstimation} assume to have knowledge of reservoir heads $\hm_\Vr = (h_v)_{v \in \Vr}$ and consumer demands $\dm_\Vc = \dm = (d_v)_{v \in \Vc}$ in order to estimate consumer heads $\hm_\Vc = (h_v)_{v \in \Vc}$ and flows $\qm_E = \qm = (q_e)_{e \in E}$.
If the overall estimation $(\hm,\qm,\dm) := ([\hm_\Vr^T,\hm_\Vc^T]^T,\qm,\dm)$ is physically correct, it is automatically physically correct \gls{wrt} the reservoir heads $\hm_\Vr^* = \hm_\Vr$ and the consumer demands $\dm_\Vc^* = \dm_\Vc$ (i.e., in terms of Definition \ref{definition_PhysicallyCorrectHydraulicStatesUnderSideConstraints}, we have $\Vh = \Vr$, $\Eq = \emptyset$ and $\Vd = \Vc$ in this example). 

More generally, a natural question is for which subset of known hydraulic states $\hm_\Vh, \qm_\Eq$ and $\dm_\Vd$, 
there exist remaining hydraulic states $\hm_{V \setminus \Vh}, \qm_{E \setminus \Eq}$ and $\dm_{\Vc \setminus \Vd}$, 
such that the triple $(\hm,\qm,\dm) := ([\hm_\Vh^T,\hm_{V \setminus \Vh}^T]^T,[\qm_\Eq^T,\qm_{E \setminus \Eq}^T]^T,[\dm_\Vd^T,\dm_{\Vc \setminus \Vd}^T]^T)$ is physically correct \gls{wrt} these side constraints (i.e., \gls{wrt} $\hm_\Vh^* = \hm_\Vh, \qm_\Eq^* = \qm_\Eq$ and $\dm_\Vd^* = \dm_\Vd$). Even more, it is of interest whether the remaining hydraulic states are uniquely determined by the known ones.
We will investigate on this question in the following Section \ref{subsection_ExistenceAndUniquenessOfHydraulicStates}.

\begin{remark}[Existence of physically correct hydraulic states under side constraints from the real world]
\label{remark_ExistenceOfPhysicallyCorrectHydraulicStatesUnderSideConstraintsFromTheRealWorld}
    If the subset of known hydraulic states corresponds to error-free observations from the real world, there trivially exists remaining hydraulic states as investigated above.
    However, in practice, observations from the real world are prone to errors, for example, due to measurement inaccuracies. Therefore, we are generally interested into the existence and uniqueness of physically correct hydraulic states \gls{wrt} \textit{general} side constraints, i.e., side constraints that might not correspond to error-free observations from the real world. 
\end{remark}

\subsection{Existence and Uniqueness of Hydraulic States}
\label{subsection_ExistenceAndUniquenessOfHydraulicStates}

By Theorem \ref{theorem_HydraulicPrinciples_MatrixFormulation}, physically correct hydraulic states $(\hm,\qm,\dm)$ are determined by the Equations \eqref{align_HydraulicPrinciples_ConservationOfEnergy_MatrixForm} and \eqref{align_HydraulicPrinciples_ConservationOfMass_MatrixForm}, which correspond to the conservation of energy (Definition \ref{definition_HydraulicPrinciples_ConservationOfEnergy}) and the conservation of mass (Definition \ref{definition_HydraulicPrinciples_ConservationOfMass}), respectively:
\begin{align*}
    \Bm^T \hm &= \Dm \qm, \\
    \Bm_{\Vc \cdot} \qm &= - 2\dm.
\end{align*}

\noindent In dependence of what subset of states is known, they translate to a \gls{SLE} or a \gls{SNLE}, which we can analyze \gls{wrt} the question of the existence and uniqueness of a solution.\footnote{
    Note that if not stated otherwise, we consider the system of equations as such over the vector space $\R^n$, where the dimension $n \in \N$ depends on the states known and the states unknown. 
    In the context of \glspl{WDS}, the additional assumption that the heads $\hm \in \R_{\geq 0}^\n{n}$ are non-negative (cf. Definition \ref{definition_HydraulicStates}) is only a consequence of the physical context. If we find a solution to such a system of equations which includes heads and which is based on known states from the real world, i.e., based on know states that obey these physics, the additional assumptions will automatically be satisfied.
    Also note that accordingly, the following theorems are purely mathematical results that also hold if the known states are \textit{not} observations from the real world (cf. Remark \ref{remark_ExistenceOfPhysicallyCorrectHydraulicStatesUnderSideConstraintsFromTheRealWorld}). We will see in Theorem \ref{theorem_ExistenceAndUniquenessOfHydraulicStates_ReservoirHeadsAndFlows} and Lemma \ref{lemma_ExistenceAndUniquenessOfHydraulicStates_OnReservoirHeadsAndFlowsCondition} that the \textit{additional} assumption that the known states are observations from the real world can cause the removal of another technical assumption required in the purely mathematical context.
}

\begin{theorem}[Reservoir heads and consumer heads determine the hydraulic state uniquely]
\label{theorem_ExistenceAndUniquenessOfHydraulicStates_ReservoirHeadsAndConsumerHeads}
    Let $\G = (V,E)$ be a \gls{WDS} and 
    $\hm = \hm_V = [\hm_\Vr^T,\hm_\Vc^T]^T$ 
    known 
    (reservoir and consumer) heads.
    \\There exists exactly one 
    tuple $(\qm,\dm)$ 
    of flows and demands
    such that the triple 
    $(\hm,\qm,\dm)$ 
    of hydraulic states is physically correct \gls{wrt} these side constraints.
\end{theorem}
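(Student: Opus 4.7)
The strategy is to exploit the matrix formulation from Theorem \ref{theorem_HydraulicPrinciples_MatrixFormulation} and solve the two governing equations sequentially: first the energy equation for $\qm$, then the mass equation for $\dm$. Physical correctness with the prescribed $\hm$ is equivalent to the pair
\begin{align*}
    \Bm^T \hm &= \Dm(\qm)\, \qm, \\
    \Bm_{\Vc \cdot}\, \qm &= - 2\, \dm,
\end{align*}
and because $\Dm(\qm)$ is diagonal, the first equation decouples across edges. For each $e = e_{vu} \in E$, its $e$-th row is the scalar equation $h_v - h_u = r_{vu}\, q_{vu}\, |q_{vu}|^{x-1}$ in the single unknown $q_{vu}$.

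Next I would show that each such scalar equation has exactly one solution. Define $\phi_e : \R \to \R$ by $\phi_e(q) = r_{vu}\, q\, |q|^{x-1}$. Since $r_{vu} > 0$ and the Hazen-Williams exponent satisfies $x > 1$, computing $\phi_e'$ on $\R \setminus \{0\}$ together with continuity at $0$ shows that $\phi_e$ is continuous, odd, strictly increasing, and surjective onto $\R$, hence a bijection. Therefore $q_{vu} := \phi_e^{-1}(h_v - h_u)$ is the unique scalar solution, and carrying this out for every edge defines $\qm$ uniquely from $\hm$. As a side benefit, the oddness of $\phi_e$ together with the symmetry $r_{vu} = r_{uv}$ automatically gives $q_{uv} = -q_{vu}$, so the conservation of flows that is implicitly built into the matrix formulation is satisfied without extra work.

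With $\qm$ fixed, the mass equation gives the unique $\dm = -\frac{1}{2}\, \Bm_{\Vc \cdot}\, \qm$; conversely, defining $\qm$ and $\dm$ by these explicit formulas produces a triple that satisfies both matrix equations by construction, so existence and uniqueness follow together. The hard part turns out not really to be hard: the argument is a direct computation once one recognises the edge-by-edge decoupling afforded by the diagonal structure of $\Dm(\qm)$, and the only genuine ingredient is the short check that $\phi_e$ is a bijection of $\R$. In particular, no information about the graph topology, and hence no appeal to Theorem \ref{theorem_RankOfSubmatricesOfIncidenceMatrix}, is needed here, since $\dm$ is determined by a plain matrix-vector product rather than by inverting a linear system.
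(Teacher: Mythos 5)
Your proposal is correct and follows essentially the same route as the paper's proof: both exploit the diagonal structure of $\Dm(\qm)$ to decouple the energy equation into scalar equations per edge, solve each uniquely for $q_{vu}$ (the paper writes the inverse explicitly as $q_{vu} = \sgn(h_v - h_u)\left(r_{vu}^{-1}|h_v - h_u|\right)^{1/x}$ where you invoke bijectivity of $\phi_e$ abstractly), and then read off $\dm = -\tfrac{1}{2}\Bm_{\Vc\cdot}\qm$. Your observation that no rank or topology argument is needed here also matches the paper.
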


\begin{remark}[Theorem \ref{theorem_ExistenceAndUniquenessOfHydraulicStates_ReservoirHeadsAndConsumerHeads} lies the foundation for the work of \citet{Nagar2012WaterDomain_WaterHydraulics_Theory_ObservabilityAnalysis} and \citet{Diaz2015WaterDomain_WaterHydraulics_Theory_AlgebraicObservabilityAnalysis}]
    As elaborated in the Section \ref{section_Introduction}, the work of \citet{Nagar2012WaterDomain_WaterHydraulics_Theory_ObservabilityAnalysis} and \citet{Diaz2015WaterDomain_WaterHydraulics_Theory_AlgebraicObservabilityAnalysis} rely on the linearization of the non-linear hydraulic principles using Taylor approximation. The resulting approximated system of equations is a \gls{SLE} determined by the Jacobian of the hydraulic principles expressed as a function of the heads $\hm$.
   This is based on the assumption that the heads $\hm$ display what the authors define as \textit{state variables}\footnote{
        We avoid the usage of the term \textit{state variable} since we in general call all magnitudes such as heads, demands and flows \textit{states} or \textit{hydraulic states}.
   }, i.e., \enquote{a minimal set of variables whose values are sufficient to compute, by using the network model, the value of any other network variable}\footnote{
        In this context, the network model corresponds to \enquote{the nonlinear relationship [...] derived from the network's hydraulic model}. 
        The authors refer to the explicit hydraulic principles as presented in Definition \ref{definition_HydraulicPrinciples_ConservationOfEnergy} and \ref{definition_HydraulicPrinciples_ConservationOfMass}.
    } \cite{Diaz2015WaterDomain_WaterHydraulics_Theory_AlgebraicObservabilityAnalysis}.
    In other words, the approaches are based on the assumption that by the hydraulic principles as introduced in Section \ref{subsection_WaterHydraulics_HydraulicPrinciples}, the whole hydraulic state $(\hm,\qm,\dm)$ can be inferred from the heads $\hm$. 
    This is exactly what Theorem \ref{theorem_ExistenceAndUniquenessOfHydraulicStates_ReservoirHeadsAndConsumerHeads} states.
    Even more, it states that the whole hydraulic state $(\hm,\qm,\dm)$ is uniquely determined by the heads $\hm$.

    Additionally, this work gives theoretically rigorous answers to the question which subsets of hydraulic states beyond the heads define what the works \citet{Nagar2012WaterDomain_WaterHydraulics_Theory_ObservabilityAnalysis,Diaz2015WaterDomain_WaterHydraulics_Theory_AlgebraicObservabilityAnalysis,Diaz2017WaterDomain_WaterHydraulics_Theory_TopologicalObservabilityAnalysis} define as a state variable.
    This lies the foundation to possible adaptations of their approaches.
\end{remark}

\begin{remark}[Heads imply flows and flows imply demands]
\label{remark_HeadsImplyFlowsAndFlowsImplyDemands}
    In the proof of Theorem \ref{theorem_ExistenceAndUniquenessOfHydraulicStates_ReservoirHeadsAndConsumerHeads}, we show that 
    heads $\hm$ uniquely determine flows $\qm$ 
    and 
    flows $\qm$ uniquely determine demands $\dm$.
    The opposite directions do not hold:
    Given flows $\qm$, there can be none or multiple heads $\hm$ that satisfy $\Bm^T \hm = \Dm \qm$, and given demands $\dm$, there can be multiple flows $\qm$ that satisfy $\Bm_{\Vc \cdot} \qm = - 2 \dm$.\footnote{
        A common result in graph theory is that $\rk(\Bm^T) = \rk(\Bm) = \n{n} - 1$ holds (cf. \citet{Bapat2014Graphtheory}).
        Moreover, since $\G$ is a connected graph, $\n{n} - 1 \leq \n{e}$ holds.
        Thus, by basics results from linear algebra, the matrix $\Bm^T$ induces a linear map $\R^\n{n} \rightarrow \R^\n{e}, \hm \mapsto \Bm^T \hm$ which is neither surjective nor injective.
        Similarly, by Theorem \ref{theorem_RankOfSubmatricesOfTheIncidenceMatrix}, $\rk(\Bm_{\Vc \cdot}) = \n{c}$ holds. 
        Therefore, as we discuss in Remark \ref{remark_ExistenceAndUniquenessOfHydraulicStates_ReservoirHeadsAndFlows} or \ref{remark_OnTheConservationOfMass}, 
        $\n{c} \leq \n{e}$ holds and the matrix $\Bm_{\Vc \cdot}$ induces a linear map $\R^\n{c} \rightarrow \R^\n{e}, \qm \mapsto \Bm_{\Vc \cdot} \qm$ which is bijective if $\n{c} = \n{e}$ holds, but only surjective and not injective in the more common case of $\n{c} < \n{e}$.
    }
\end{remark}

\noindent 
Usually, 
instead of considering all heads $\hm \in \R^\n{n} = \R^{\n{r} + \n{c}}$, 
one distinguishes between heads $\hm_\Vr \in \R^\n{r}$ at reservoirs and heads $\hm_\Vc \in \R^{\n{c}}$ at consumer nodes. 
Based on this distinction, similar to \citet{Todini2013WaterDomain_StateSimulation_UnifiedFramework} and after eventually resorting, we can rewrite the corresponding vectors, matrices and multiplications as
\begin{align*}
    \hm 
    =
    \begin{bmatrix}
        \hm_\Vr \\
        \hm_\Vc
    \end{bmatrix}
    ,~
    \Bm 
    =
    \begin{bmatrix}
        \Bm_{\Vr \cdot} \\
        \Bm_{\Vc \cdot}
    \end{bmatrix}
    ,~
    \Bm^T \hm
    =
    \begin{bmatrix}
        \Bm_{\Vr \cdot}^T & \Bm_{\Vc \cdot}^T
    \end{bmatrix}
    \cdot 
    \begin{bmatrix}
        \hm_\Vr \\
        \hm_\Vc
    \end{bmatrix}
    =
    \Bm_{\Vr \cdot}^T \hm_\Vr + \Bm_{\Vc \cdot}^T \hm_\Vc
\end{align*}

\noindent 
and 
by this, Equation \eqref{align_HydraulicPrinciples_ConservationOfEnergy_MatrixForm} and \eqref{align_HydraulicPrinciples_ConservationOfMass_MatrixForm} translate to 
\begin{align}
\label{align_HydraulicPrinciples_ConservationOfEnergy_MatrixForm_separatedHeads}
    \Bm_{\Vr \cdot}^T \hm_\Vr + \Bm_{\Vc \cdot}^T \hm_\Vc &= \Dm \qm, \\
\label{align_HydraulicPrinciples_ConservationOfMass_MatrixForm_separatedHeads}
    \Bm_{\Vc \cdot} \qm &= - 2\dm.
\end{align}

\begin{theorem}[Reservoir heads and flows determine the hydraulic state uniquely]
\label{theorem_ExistenceAndUniquenessOfHydraulicStates_ReservoirHeadsAndFlows}
    Let $\G = (V,E)$ be a \gls{WDS} and 
    $\hm_\Vr$ and $\qm = \qm_E$ 
    known 
    reservoir heads and flows, respectively.
    \\
    If $\Dm \qm - \Bm_{\Vr \cdot}^T \hm_\Vr \in \im(\Bm_{\Vc \cdot}^T) \subset \R^{\n{e}}$ holds,
    there exists exactly one 
    tuple $(\hm_\Vc,\dm)$ 
    of consumer heads and demands 
    such that the triple 
    $(\hm,\qm,\dm) = ([\hm_\Vr^T,\hm_\Vc^T]^T,\qm,\dm)$ 
    of hydraulic states is physically correct \gls{wrt} these side constraints.
\end{theorem}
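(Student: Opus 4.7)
The plan is to split the two hydraulic principles in their matrix form \eqref{align_HydraulicPrinciple_ConservationOfEnergy_MatrixForm_separatedHeads} and \eqref{align_HydraulicPrinciple_ConservationOfMass_MatrixForm_separatedHeads} and handle the demands $\dm$ and the consumer heads $\hm_\Vc$ one after the other, since, given the flows $\qm$, the two equations decouple completely.

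First, I would observe that the conservation of mass \eqref{align_HydraulicPrinciple_ConservationOfMass_MatrixForm_separatedHeads} is already solved explicitly for $\dm$: since $\qm$ is known, the relation $\Bm_{\Vc \cdot} \qm = -2\dm$ fixes $\dm$ as the unique value $\dm = -\tfrac{1}{2} \Bm_{\Vc \cdot} \qm$. In particular, existence and uniqueness of $\dm$ are immediate and do not require any further structural assumption.

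Next, I would turn to the conservation of energy \eqref{align_HydraulicPrinciple_ConservationOfEnergy_MatrixForm_separatedHeads}, rewritten as
\begin{align*}
    \Bm_{\Vc \cdot}^T \hm_\Vc = \Dm \qm - \Bm_{\Vr \cdot}^T \hm_\Vr,
\end{align*}
where, by assumption, $\qm$, $\hm_\Vr$ and hence also the right-hand side are known. Existence of $\hm_\Vc$ solving this linear system is exactly the hypothesis that $\Dm \qm - \Bm_{\Vr \cdot}^T \hm_\Vr \in \im(\Bm_{\Vc \cdot}^T)$, so nothing more needs to be shown there. Uniqueness reduces to the injectivity of $\Bm_{\Vc \cdot}^T$, or equivalently to $\Bm_{\Vc \cdot}$ having full row rank $\n{c}$. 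Since $\Vr \neq \emptyset$, we have $\Vc \subsetneq V$, and $\Vc \neq \emptyset$ by definition \ref{definition_WaterDistributionSystem_SymmetricDirected}, so Theorem \ref{theorem_RankOfSubmatricesOfIncidenceMatrix} applied with $S = \Vc$ yields $\rk(\Bm_{\Vc \cdot}) = |\Vc| = \n{c}$, which is the full row rank and therefore gives injectivity of $\Bm_{\Vc \cdot}^T$.

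Combining the two steps, the triple $(\hm, \dm, \qm) = ([\hm_\Vr^T, \hm_\Vc^T]^T, \dm, \qm)$ is physically correct with respect to the given side constraints, and the new components $\hm_\Vc$ and $\dm$ are uniquely determined. The only non-trivial ingredient is the full-rank statement for $\Bm_{\Vc \cdot}$, which is why Theorem \ref{theorem_RankOfSubmatricesOfIncidenceMatrix} was singled out earlier; I expect this to be the conceptual heart of the argument, while everything else amounts to rewriting and invoking the hypothesis on the image of $\Bm_{\Vc \cdot}^T$.
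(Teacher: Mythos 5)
Your proposal is correct and follows essentially the same route as the paper: existence of $\hm_\Vc$ from the image hypothesis, the explicit formula $\dm = -\tfrac{1}{2}\Bm_{\Vc\cdot}\qm$, and uniqueness via $\rk(\Bm_{\Vc\cdot}) = \n{c}$ from Theorem \ref{theorem_RankOfSubmatricesOfIncidenceMatrix} with $S = \Vc$. The only cosmetic difference is that the paper assembles both equations into one block \acrshort{SLE} and argues uniqueness by showing the block matrix has full column rank $2\n{c}$, whereas you treat the two decoupled equations separately, which is equivalent.
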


\noindent 
While the following Remark \ref{remark_ExistenceAndUniquenessOfHydraulicStates_ReservoirHeadsAndFlows} investigates on the necessity of the condition $\Dm \qm - \Bm_{\Vr \cdot}^T \hm_\Vr \in \im(\Bm_{\Vc \cdot}^T) \subset \R^{\n{e}}$ in Theorem \ref{theorem_ExistenceAndUniquenessOfHydraulicStates_ReservoirHeadsAndFlows}, 
Lemma \ref{lemma_ExistenceAndUniquenessOfHydraulicStates_OnReservoirHeadsAndFlowsCondition} shows that it is naturally satisfied if the known reservoir heads $\hm_\Vr$ and flows $\qm$ are part of a physically correct hydraulic state.
This is, for example, the case if they are error-free observations from the real world (also cf. Remark \ref{remark_ExistenceOfPhysicallyCorrectHydraulicStatesUnderSideConstraintsFromTheRealWorld}).

\begin{remark}[Reservoir heads and flows determine the hydraulic state uniquely]
\label{remark_ExistenceAndUniquenessOfHydraulicStates_ReservoirHeadsAndFlows}
    Theorem \ref{theorem_ExistenceAndUniquenessOfHydraulicStates_ReservoirHeadsAndFlows} requires the condition that $\Dm \qm - \Bm_{\Vr \cdot}^T \hm_\Vr \in \im(\Bm_{\Vc \cdot}^T) \subset \R^{\n{e}}$ holds, 
    which guarantees the existence of a solution $\hm_\Vc \in \R^\n{c}$ to the \gls{SLE} %
    \begin{align*}
        \Bm_{\Vc \cdot}^T \hm_\Vc = \Dm \qm - \Bm_{\Vr \cdot}^T \hm_\Vr
    \end{align*}

    \noindent and therefore, to Equation \eqref{align_HydraulicPrinciples_ConservationOfEnergy_MatrixForm_separatedHeads}.
    The matrix $\Bm_{\Vc \cdot}^T \in \R^{\n{e} \times \n{c}}$ induces a linear map $\R^\n{c} \longrightarrow \R^\n{e}, \hm_\Vc \longmapsto \Bm_{\Vc \cdot}^T \hm_\Vc$. Since by Theorem \ref{theorem_RankOfSubmatricesOfTheIncidenceMatrix}, $\rk(\Bm_{\Vc \cdot}^T) = \rk(\Bm_{\Vc \cdot}) = \n{c}$ holds, the map is injective, which -- together with the assumption that $\Dm \qm - \Bm_{\Vr \cdot}^T \hm_\Vr \in \im(\Bm_{\Vc \cdot}^T) \subset \R^{\n{e}}$ holds -- makes this solution $\hm_\Vc$ unique. 
    
    Moreover, note that 
    by $\n{c} = \rk(\Bm_{\Vc \cdot}) \leq \min\{\n{c},\n{e}\} \leq \n{c}$, $\min\{\n{c},\n{e}\} = \n{c}$ and thus, $\n{e} \geq \n{c}$ follows. 
    If $\n{e} = \n{c}$ holds, the linear map is not only injective, but bijective, and $\Dm \qm - \Bm_{\Vr \cdot}^T \hm_\Vr \in \im(\Bm_{\Vc \cdot}^T)$ trivially holds.
    However, in the usual case where $\n{e} > \n{c}$ holds, the linear map can not be surjective, making $\Dm \qm - \Bm_{\Vr \cdot}^T \hm_\Vr \in \im(\Bm_{\Vc \cdot}^T)$ a necessary assumption:
    Since by $\rk(\Bm_{\Vc \cdot}^T) = \n{c}$, the solution $\hm_\Vc$ is already determined by $\n{c}$ of the $\n{e}$ equations of the \gls{SLE} above, it can happen that one or multiple of the $\n{e} - \n{c}$ remaining equations contradict the previous $\n{c}$ ones. In this case, $\Dm \qm - \Bm_{\Vr \cdot}^T \hm_\Vr \not \in \im(\Bm_{\Vc \cdot}^T)$ holds.
\end{remark}

\begin{lemma}[Reservoir heads and flows determine the hydraulic state uniquely]
\label{lemma_ExistenceAndUniquenessOfHydraulicStates_OnReservoirHeadsAndFlowsCondition}
    Let $\G = (V,E)$ be a \gls{WDS} and 
    $\hm_\Vr$ and $\qm = \qm_E$ 
    known 
    reservoir heads and flows, respectively.
    If there exist unobserved consumer heads $\hm_\Vc$ and demands $\dm$ such that
    the triple $(\hm,\qm,\dm) = ([\hm_\Vr^T,\hm_\Vc^T]^T,\qm,\dm)$ 
    of hydraulic states is physically correct \gls{wrt} these side constraints,
    then $\Dm \qm - \Bm_{\Vr \cdot}^T \hm_\Vr \in \im(\Bm_{\Vr \cdot}^T)$ holds.
\end{lemma}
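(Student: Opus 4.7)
The plan is to read the conclusion directly off the matrix formulation of the conservation of energy and to verify that the hypothesis already supplies the witness for the claimed image-membership. I would start by assuming the hypothesis: there exist consumer heads $\hm_\Vc \in \R^{\n{c}}$ and demands $\dm \in \R^{\n{c}}$ such that the triple $(\hm,\dm,\qm) = ([\hm_\Vr^T,\hm_\Vc^T]^T,\dm,\qm)$ is physically correct with respect to the given side constraints. By theorem \ref{theorem_HydraulicPrinciples_MatrixFormulation}, physical correctness is equivalent to the system of equations \eqref{align_HydraulicPrinciple_ConservationOfEnergy_MatrixForm}–\eqref{align_HydraulicPrinciple_ConservationOfMass_MatrixForm}, and in the split form \eqref{align_HydraulicPrinciple_ConservationOfEnergy_MatrixForm_separatedHeads} the conservation of energy reads $\Bm_{\Vr \cdot}^T \hm_\Vr + \Bm_{\Vc \cdot}^T \hm_\Vc = \Dm \qm$.

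Next, I would isolate the quantity $\Dm \qm - \Bm_{\Vr \cdot}^T \hm_\Vr$ by subtracting $\Bm_{\Vr \cdot}^T \hm_\Vr$ from both sides, obtaining the identity $\Dm \qm - \Bm_{\Vr \cdot}^T \hm_\Vr = \Bm_{\Vc \cdot}^T \hm_\Vc$. The right-hand side is, by construction, a column combination of $\Bm_{\Vc \cdot}^T$ with coefficient vector $\hm_\Vc$, so it lies in $\im(\Bm_{\Vc \cdot}^T)$. This exhibits a concrete preimage under $\Bm_{\Vc \cdot}^T$, namely $\hm_\Vc$ itself, so the image-membership is witnessed and does not rely on any rank computation, on theorem \ref{theorem_RankOfSubmatricesOfIncidenceMatrix}, or on any structural property of $g$ beyond what is already packaged into theorem \ref{theorem_HydraulicPrinciples_MatrixFormulation}.

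The hard part, and essentially the only obstacle in executing this plan, lies in the conclusion as it is actually typeset in the lemma: the image appearing on the right is $\im(\Bm_{\Vr \cdot}^T)$, whereas the derivation above naturally lands in $\im(\Bm_{\Vc \cdot}^T)$. In general $\im(\Bm_{\Vr \cdot}^T) \neq \im(\Bm_{\Vc \cdot}^T)$, so no matching coefficient vector in reservoir-indexed coordinates can be produced from the energy equation; the two images play structurally distinct roles, and the former is exactly the image needed to feed into theorem \ref{theorem_ExistenceAndUniquenessOfHydraulicStates_ReservoirHeadsAndFlows} as its solvability condition. I would therefore flag the conclusion as an apparent typographical swap of the subscripts $\Vr \leftrightarrow \Vc$ and present the proof as establishing $\Dm \qm - \Bm_{\Vr \cdot}^T \hm_\Vr \in \im(\Bm_{\Vc \cdot}^T)$, which is the version that is both true and consistent with the hypothesis of theorem \ref{theorem_ExistenceAndUniquenessOfHydraulicStates_ReservoirHeadsAndFlows} that the lemma is meant to discharge.
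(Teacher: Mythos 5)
Your proof is correct and is essentially the paper's own argument run forwards: the paper establishes the same one-line fact by contradiction (assuming $\Dm \qm - \Bm_{\Vr \cdot}^T \hm_\Vr$ is not in the image and deriving $\Bm^T \hm \neq \Dm \qm$, contradicting the conservation of energy guaranteed by theorem \ref{theorem_HydraulicPrinciples_MatrixFormulation}), whereas you read the witness $\hm_\Vc$ directly off equation \eqref{align_HydraulicPrinciple_ConservationOfEnergy_MatrixForm_separatedHeads}, which is the same content stated positively. Your diagnosis of the subscript is also correct: the conclusion should read $\im(\Bm_{\Vc \cdot}^T)$, and the $\Vr \leftrightarrow \Vc$ slip propagates into the paper's own proof, where the expression $\Bm_{\Vr \cdot}^T \hm_\Vc$ is not even dimensionally consistent; the corrected version is exactly the hypothesis consumed by theorem \ref{theorem_ExistenceAndUniquenessOfHydraulicStates_ReservoirHeadsAndFlows}.
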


\noindent As investigated in Remark \ref{remark_ExistenceAndUniquenessOfHydraulicStates_ReservoirHeadsAndFlows}, the knowledge of reservoir heads $\hm_\Vr \in \R^\n{r}$ and \textit{all} flows $\qm \in \R^\n{e}$ is not a minimal assumption in order to obtain existence and uniqueness of physically correct hydraulic states.
In order to get rid of the additional assumption in Theorem \ref{theorem_ExistenceAndUniquenessOfHydraulicStates_ReservoirHeadsAndFlows}, we can make use of Corallary \ref{corollary_DecompositionOfTheInciddenceMatrix} with $S = \Vc$ to decompose the set of edges $E$ into the subsets $\Ei$ and $\Ed$, corresponding to the $\n{c}$ linearly \textbf{i}ndependent and $\n{e} - \n{c}$ linearly \textbf{d}ependent columns of the limited incidence matrix $\Bm_{\Vc \cdot} \in \R^{\n{c} \times \n{e}}$,  respectively.
Note that according to Corallary \ref{corollary_DecompositionOfTheInciddenceMatrix}.4, $\Ei$ also corresponds to $\n{c}$ linearly \textbf{i}ndependent columns of the incidence matrix $\Bm \in \R^{\n{n} \times \n{e}}$.

Consequently and similar to as we have done with the heads earlier in this subsection,
instead of considering all flows $\qm \in \R^\n{e} = \R^{\n{c} + (\n{e} - \n{c})}$, 
we can distinguish between the flows $\qm_\Ei \in \R^\n{c}$ and the flows $\qm_\Ed \in \R^{\n{e} - \n{c}}$. 
Based on this distinction and after eventually resorting, we can rewrite the corresponding vectors, matrices and multiplications as detailed in the proof of Theorem \ref{theorem_ExistenceAndUniquenessOfHydraulicStates_ReservoirHeadsAndSomeFlows}.
By this, Equation \eqref{align_HydraulicPrinciples_ConservationOfEnergy_MatrixForm_separatedHeads} and \eqref{align_HydraulicPrinciples_ConservationOfMass_MatrixForm_separatedHeads} translate to
\begin{align}
\label{align_HydraulicPrinciples_ConservationOfEnergy_MatrixForm_separatedHeads_separatedFlows_independentFlows}
    \Bm_{\cdot \Ei}^T \hm
    =&~
    \Bm_{\Vr \Ei}^T \hm_\Vr + \Bm_{\Vc \Ei}^T \hm_\Vc 
    ~~=
    \Dm_{\Ei \Ei} ~ \qm_\Ei, \\    
\label{align_HydraulicPrinciples_ConservationOfEnergy_MatrixForm_separatedHeads_separatedFlows_dependentFlows}
    \Bm_{\cdot \Ed}^T \hm
    =&~
    \Bm_{\Vr \Ed}^T \hm_\Vr + \Bm_{\Vc \Ed}^T \hm_\Vc
    ~= 
    \Dm_{\Ed \Ed} \qm_\Ed, \\
\label{align_HydraulicPrinciples_ConservationOfMass_MatrixForm_separatedHeads_separatedFlows}
    \Bm_{\Vc \cdot} \qm
    =&~
    \Bm_{\Vc \Ei} \qm_\Ei + \Bm_{\Vc \Ed} ~ \qm_\Ed 
    = 
    - 2\dm,
\end{align}

\noindent with the peculiarity that according to Corallary \ref{corollary_DecompositionOfTheInciddenceMatrix}.3, $\Bm_{\Vc \Ei} \in \R^{\n{c} \times \n{c}}$, and therefore, also $\Bm_{\Vc \Ei}^T \in \R^{\n{c} \times \n{c}}$ is an invertible matrix.

\begin{theorem}[Reservoir heads and some flows determine the hydraulic state uniquely]
\label{theorem_ExistenceAndUniquenessOfHydraulicStates_ReservoirHeadsAndSomeFlows}
    Let $\G = (V,E)$ be a \gls{WDS} and 
    $\hm_\Vr$ and $\qm_\Ei$ 
    known 
    reservoir heads and flows, respectively,
    where -- as introduced above -- $\Ei$ is any subset of the edges $E$ corresponding to $\n{c}$ linearly independent columns of the limited incidence matrix $\Bm_{\Vc \cdot}$ of $\G$.
    \\There exists exactly one 
    triple $(\hm_\Vc,\qm_\Ed,\dm)$  
    of consumer heads, remaining flows and demands
    such that the triple 
    $(\hm,\qm,\dm) = ([\hm_\Vr^T,\hm_\Vc^T]^T,[\qm_\Ei^T,\qm_\Ed^T]^T,\dm)$ 
    of hydraulic states is physically correct \gls{wrt} these side constraints.
\end{theorem}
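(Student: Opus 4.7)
The plan is to solve the three equations \eqref{align_HydraulicPrinciple_ConservationOfEnergy_MatrixForm_separatedHeads_separatedFlows_independentFlows}, \eqref{align_HydraulicPrinciple_ConservationOfEnergy_MatrixForm_separatedHeads_separatedFlows_dependentFlows} and \eqref{align_HydraulicPrinciple_ConservationOfMass_MatrixForm_separatedHeads_separatedFlows} sequentially for the unknowns $\hm_\Vc$, $\qm_\Ed$ and $\dm$, in that order, showing at each step that the relevant equation admits exactly one solution once the preceding unknowns have been pinned down. Since theorem \ref{theorem_HydraulicPrinciples_MatrixFormulation} and lemma \ref{lemma_RedundancyOfHydraulicPrinciple_ConservationOfFlows} reduce physical correctness to exactly these three matrix equations, such a chained existence-and-uniqueness argument yields the claim.

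\textbf{Step 1: Determine $\hm_\Vc$ from \eqref{align_HydraulicPrinciple_ConservationOfEnergy_MatrixForm_separatedHeads_separatedFlows_independentFlows}.} The diagonal block $\Dm_{\Ei\Ei}$ depends only on $\qm_\Ei$, which is given, so the right-hand side $\Dm_{\Ei\Ei}\qm_\Ei - \Bm_{\Vr\Ei}^T \hm_\Vr \in \R^{\n{c}}$ is fully known. By corollary \ref{corollary_DecompositionOfTheInciddenceMatrix}.3, $\Bm_{\Vc\Ei} \in \R^{\n{c}\times \n{c}}$ is invertible, hence so is $\Bm_{\Vc\Ei}^T$. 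Equation \eqref{align_HydraulicPrinciple_ConservationOfEnergy_MatrixForm_separatedHeads_separatedFlows_independentFlows} therefore admits the unique solution
\begin{align*}
    \hm_\Vc = \bigl(\Bm_{\Vc\Ei}^T\bigr)^{-1}\bigl(\Dm_{\Ei\Ei}\qm_\Ei - \Bm_{\Vr\Ei}^T \hm_\Vr\bigr).
\end{align*}

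\textbf{Step 2: Determine $\qm_\Ed$ from \eqref{align_HydraulicPrinciple_ConservationOfEnergy_MatrixForm_separatedHeads_separatedFlows_dependentFlows}.} Once $\hm_\Vc$ is fixed, the right-hand side vector $\cm := \Bm_{\Vr\Ed}^T \hm_\Vr + \Bm_{\Vc\Ed}^T \hm_\Vc$ is known, and \eqref{align_HydraulicPrinciple_ConservationOfEnergy_MatrixForm_separatedHeads_separatedFlows_dependentFlows} decouples into the componentwise scalar equations $r_e \, q_e \, |q_e|^{x-1} = c_e$ for each $e \in \Ed$. This is the genuinely non-linear part and the main obstacle in the argument. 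I will resolve it by observing that the real function $\varphi: \R \longrightarrow \R,~ q \longmapsto q\,|q|^{x-1}$ is odd, continuous and strictly increasing (its derivative $x\,|q|^{x-1}$ is positive for $q \neq 0$ and $\varphi$ is strictly monotone across $0$ by oddness), with $\varphi(\R) = \R$; consequently $\varphi$ is a bijection on $\R$. Since $r_e > 0$ by definition \ref{definition_HydraulicPrinciples_ConservationOfEnergy}, each $q_e$, $e \in \Ed$, is uniquely determined as $q_e = \sgn(c_e/r_e)\,|c_e/r_e|^{1/x}$, which establishes the existence and uniqueness of $\qm_\Ed$.

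\textbf{Step 3: Determine $\dm$ from \eqref{align_HydraulicPrinciple_ConservationOfMass_MatrixForm_separatedHeads_separatedFlows}.} With both $\qm_\Ei$ and $\qm_\Ed$ now in hand, the right-hand side of \eqref{align_HydraulicPrinciple_ConservationOfMass_MatrixForm_separatedHeads_separatedFlows} reads $\dm = -\tfrac{1}{2}\bigl(\Bm_{\Vc\Ei}\qm_\Ei + \Bm_{\Vc\Ed}\qm_\Ed\bigr)$, which uniquely fixes $\dm$. No further constraint needs to be checked, so existence and uniqueness of the whole triple follow. Note that the proof requires no additional compatibility assumption analogous to the image condition of theorem \ref{theorem_ExistenceAndUniquenessOfHydraulicStates_ReservoirHeadsAndFlows}: the key structural advantage of selecting $\qm_\Ei$ on a linearly independent column set $\Ei$ is precisely that the square block $\Bm_{\Vc\Ei}^T$ is invertible, which eliminates any potential inconsistency in the head equation.
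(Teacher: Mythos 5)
Your proposal is correct and follows essentially the same route as the paper's proof: solve \eqref{align_HydraulicPrinciple_ConservationOfEnergy_MatrixForm_separatedHeads_separatedFlows_independentFlows} for $\hm_\Vc$ via the invertibility of $\Bm_{\Vc\Ei}^T$ (corollary \ref{corollary_DecompositionOfTheInciddenceMatrix}.3), then \eqref{align_HydraulicPrinciple_ConservationOfEnergy_MatrixForm_separatedHeads_separatedFlows_dependentFlows} componentwise for $\qm_\Ed$ using the bijectivity of $q \mapsto r_e\,q\,|q|^{x-1}$, then \eqref{align_HydraulicPrinciple_ConservationOfMass_MatrixForm_separatedHeads_separatedFlows} for $\dm$. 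The only cosmetic difference is that you spell out the scalar inversion in Step 2 directly, whereas the paper defers to the analogous computation in the proof of theorem \ref{theorem_ExistenceAndUniquenessOfHydraulicStates_ReservoirHeadsAndConsumerHeads}.
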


\noindent 
It is a common result in graph theory that the $\n{c}$ linear independent columns $\Bm_{\cdot E_i} \in \R^{\n{n} \times \n{c}}$ of the incidence matrix $\Bm \in \R^{\n{n} \times \n{e}}$ induce an acyclic sub-graph \cite{Bapat2014Graphtheory}, 
which in this case connects each of the $\n{c}$ consumer nodes $\Vc$ to exactly one reservoir node $v \in \Vr$ by in total $\n{c}$ edges $\Ei$, without any cycles in betweem them.
The resulting graph structure is also known as a \textit{forest}, where each \textit{tree} has one reservoir included. In the case of one reservoir only, i.e., if $\n{r} = 1$ holds, this breaks down to a simple tree structure.
In view of Equation \eqref{align_HydraulicPrinciples_ConservationOfEnergy_MatrixForm_separatedHeads_separatedFlows_independentFlows}, Theorem \ref{theorem_ExistenceAndUniquenessOfHydraulicStates_ReservoirHeadsAndSomeFlows} states that the flows $\qm_\Ei$ along pipes that define such a forest (tree) are -- in combination with the reservoir heads $\hm_\Vr$ -- sufficient to determine the heads $\hm_\Vc$ at consumer nodes and -- as detailed in the proof of Theorem \ref{theorem_ExistenceAndUniquenessOfHydraulicStates_ReservoirHeadsAndSomeFlows} -- consequently, the whole hydraulic state.
This observation generalizes the results of \citet{Diaz2015WaterDomain_WaterHydraulics_Theory_AlgebraicObservabilityAnalysis}:

\begin{remark}[Theorem \ref{theorem_ExistenceAndUniquenessOfHydraulicStates_ReservoirHeadsAndSomeFlows} generalizes the results of \citet{Diaz2015WaterDomain_WaterHydraulics_Theory_AlgebraicObservabilityAnalysis}]
    
    \citet{Diaz2015WaterDomain_WaterHydraulics_Theory_AlgebraicObservabilityAnalysis} present two examples to which they illustratively apply their proposed algebraic algorithm. In both examples, reservoir heads and a subset of flows are given, and they differ in the choice of that subset.
    In the first example, flows along four out of seven pipes that connect all four consumer nodes to a reservoir node are given, therefore naturally spanning a forest (tree) and thus not including cycles. The result of the algebraic algorithm is that \enquote{the system state is observable} \cite{Diaz2015WaterDomain_WaterHydraulics_Theory_AlgebraicObservabilityAnalysis}, i.e., that one can infer the whole hydraulic state from this subset of hydraulic states.
    In the second example, flows along four out of seven pipes that connect three out of four consumer nodes to a reservoir node are given, therefore naturally including a cycle. The result of the algebraic algorithm is that \enquote{the system is unobservable} \cite{Diaz2015WaterDomain_WaterHydraulics_Theory_AlgebraicObservabilityAnalysis}, i.e., that one cannot infer the whole hydraulic state from this subset of hydraulic states.

    As discussed above, these two examples are the two possible cases of choices of flows: 
    Either, the subset of known flows corresponds to $\n{c}$ linearly independent columns of the limited incidence matrix $\Bm_{\Vc \cdot}$ of $\G$, which corresponds to edges that connect all consumer nodes $\Vc$ to a reservoir without any circles (first example). 
    In this case, the whole hydraulic state can be inferred from this subset together with the reservoir heads according to Theorem \ref{theorem_ExistenceAndUniquenessOfHydraulicStates_ReservoirHeadsAndSomeFlows}.
    Or, the subset of known flows corresponds to linearly dependent columns of the limited incidence matrix $\Bm_{\Vc \cdot}$ of $\G$, which corresponds to edges that connect only some consumer nodes $\Vc$ to a reservoir while introducing cycles (second example).
    In this case, the whole state can not be inferred. 
    Indeed, 
    if $\rk(\Bm_{\Vc \Ei}) < \n{c}$ holds,
    by basic results from linear algebra, the matrix $\Bm_{\Vc \Ei}^T \in \R^{\n{c} \times \n{c}}$ induces a linear map $\R^\n{c} \longrightarrow \R^\n{c}, \hm_\Vc \longmapsto \Bm_{\Vc \Ei}^T \hm_\Vc$ that can neither be injective nor surjective. 
    Therefore, in this case, the \gls{SLE}  $\Bm_{\Vc \Ei}^T \hm_\Vc = \Dm_{\Ei \Ei} \qm_\Ei - \Bm_{\Vr \Ei}^T \hm_\Vr$ (cf. Eq. \eqref{align_HydraulicPrinciples_ConservationOfEnergy_MatrixForm_separatedHeads_separatedFlows_independentFlows}) might not be solvable, and if it is, the solution is not unique.
\end{remark}

\noindent 
The next theorem displays are a very important result since the subset of known hydraulic states corresponds to the ones the well-known hydraulic simulator EPANET \cite{Rossman2020WaterDomain_StateSimulation_Epanet2.2}, but also more recent physics-informed and \gls{ML}-based surrogate models \cite{Ashraf2024WaterDomain_WaterHydraulics_StateEstimationArchitechtures_StateEstimation} assume to have knowledge of.
In order to be able to prove it, an important property of the non-linearity in Equation \eqref{align_HydraulicPrinciples_ConservationOfEnergy_MatrixForm} is that it defines a strictly monotone operator.

\begin{lemma}[The non-linearity defines a strictly monotone operator]
\label{lemma_TheNonLinearityDefinesAStrictilyMonotoneOperator}
    The function
    \begin{align*}
        f: \R^\n{e} \longrightarrow \R^\n{e}, ~
        \qm 
        \longmapsto 
        f(\qm) 
        := 
        \Dm \qm
        =
        \left(
        r_e q_e |q_e|^{x-1}
        \right)_{e \in E}
        =:
        \left(
        f_e(\qm)
        \right)_{e \in E}
    \end{align*}

    \noindent satisfies
    \begin{align*}
        \langle 
        f(\qm_1) - f(\qm_2), \qm_1 - \qm_2
        \rangle
        =
        \sum_{e \in E}
        x ~ r_e ~ (q_{1e} - q_{2e})^2
        \int_{0}^{1}
        |q_{2e} + t (q_{1e} - q_{2e})|^{x-1}
        ~dt
        \geq 
        0
    \end{align*}

    \noindent for all $\qm_1, \qm_2 \in \R^\n{e}$. Even more, it defines a strictly monotone operator, that is, it satisfies
    \begin{align*}
        \langle 
        f(\qm_1) - f(\qm_2), \qm_1 - \qm_2
        \rangle
        >
        0
    \end{align*}

    \noindent for all $\qm_1, \qm_2 \in \R^\n{e}$ with $\qm_1 \neq \qm_2$.
\end{lemma}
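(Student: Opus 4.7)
The plan is to reduce the claim to a scalar statement, since $f$ acts entrywise: $f_e(\qm)$ depends only on $q_e$. Hence
\begin{align*}
    \langle f(\qm_1) - f(\qm_2), \qm_1 - \qm_2 \rangle
    =
    \sum_{e \in E} r_e \bigl( \phi(q_{1e}) - \phi(q_{2e}) \bigr) (q_{1e} - q_{2e}),
\end{align*}
where $\phi(q) := q |q|^{x-1}$. It therefore suffices to establish, for each edge separately, the one-dimensional identity
\begin{align*}
    \bigl( \phi(a) - \phi(b) \bigr)(a-b)
    =
    x (a-b)^2 \int_0^1 |b + t(a-b)|^{x-1} \, dt
\end{align*}
and then to sum over $e \in E$ with weights $r_e > 0$.

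To establish this scalar identity, I would first compute $\phi'$. For $q \neq 0$ one has $\phi(q) = \sgn(q)|q|^x$, and a short case split on the sign of $q$ yields $\phi'(q) = x |q|^{x-1}$. At $q=0$, the difference quotient $\phi(h)/h = \sgn(h)\,|h|^{x-1}$ tends to $0$ because $x > 1$, so $\phi'(0) = 0 = x|0|^{x-1}$, and $\phi \in C^1(\R)$ with $\phi'(q) = x|q|^{x-1}$ uniformly. The fundamental theorem of calculus applied along the segment $s(t) = b + t(a-b)$, $t \in [0,1]$, then gives
\begin{align*}
    \phi(a) - \phi(b)
    =
    \int_0^1 \phi'(s(t))\, s'(t) \, dt
    =
    (a-b) \int_0^1 x|b + t(a-b)|^{x-1}\, dt.
\end{align*}
Multiplying by $(a-b)$, inserting into the sum and using $r_e > 0$ for every $e \in E$ yields the stated integral representation, which is manifestly non-negative.

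For strict monotonicity, suppose $\qm_1 \neq \qm_2$, so that there exists an edge $e^\ast \in E$ with $q_{1 e^\ast} \neq q_{2 e^\ast}$. Then $(q_{1e^\ast} - q_{2e^\ast})^2 > 0$ and $r_{e^\ast} > 0$, and the integrand $|q_{2e^\ast} + t(q_{1e^\ast} - q_{2e^\ast})|^{x-1}$ is continuous, non-negative on $[0,1]$, and vanishes at most at the single point $t^\ast = -q_{2e^\ast}/(q_{1e^\ast} - q_{2e^\ast})$. A non-negative continuous function on $[0,1]$ that vanishes on a Lebesgue-null set has strictly positive integral, so the $e^\ast$-term of the sum is strictly positive, while all other terms remain non-negative; the total is therefore strictly positive.

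No serious obstacle is anticipated; the only subtlety is the $C^1$-regularity of $\phi$ at the origin, which is precisely where the assumption $x > 1$ (ensured by the Hazen--Williams constant $x = 1.852$) enters, together with the straightforward observation that the integrand vanishes on at most one point of $[0,1]$, which secures strictness rather than mere non-negativity.
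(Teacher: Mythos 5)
Your proof is correct and follows essentially the same route as the paper's: the fundamental theorem of calculus applied along the segment $t \mapsto q_{2e} + t(q_{1e}-q_{2e})$ with the derivative $x\,r_e\,|q|^{x-1}$, yielding the stated integral representation, and strictness from the integrand vanishing at no more than one point of $[0,1]$. Your explicit verification of $C^1$-regularity of $q \mapsto q|q|^{x-1}$ at the origin (where $x>1$ is used) is a welcome bit of extra care that the paper leaves implicit.
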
    

\noindent 
The monotonicity of the non-linearity $f$ is the key ingredient for our final existence and uniqueness result

\begin{theorem}[Reservoir heads and consumer demands determine the hydraulic state uniquely]
\label{theorem_ExistenceAndUniquenessOfHydraulicStates_ReservoirHeadsAndConsumerDemands}
    Let $\G = (V,E)$ be a \gls{WDS} and
    $\hm_\Vr$ and $\dm = \dm_\Vc$ 
    known 
    reservoir heads and (consumer) demands, respectively.
    \\There exists exactly one 
    tuple $(\hm_\Vc,\qm)$ 
    of consumer heads and flows 
    such that the triple 
    $(\hm,\qm,\dm) = ([\hm_\Vr^T,\hm_\Vc^T]^T,\qm,\dm)$ 
    of hydraulic states is physically correct \gls{wrt} these side constraints.
\end{theorem}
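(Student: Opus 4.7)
The plan is to recast the coupled system \eqref{align_HydraulicPrinciple_ConservationOfEnergy_MatrixForm}--\eqref{align_HydraulicPrinciple_ConservationOfMass_MatrixForm} as the first-order optimality (KKT) conditions of a strictly convex constrained minimization problem, and to read off existence and uniqueness from standard convex analysis together with the injectivity of $\Bm_{\Vc\cdot}^T$.

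First, I would observe that the non-linearity $f(\qm) = \Dm(\qm)\qm = (r_e q_e|q_e|^{x-1})_{e\in E}$ is the gradient of
$$F(\qm) := \sum_{e\in E}\frac{r_e}{x+1}|q_e|^{x+1},$$
since $\partial_{q_e} F = r_e q_e|q_e|^{x-1}$. Lemma \ref{lemma_TheNonLinearityDefinesAStrictilyMonotoneOperator}, applied to $\nabla F = f$, shows that $F$ is strictly convex on $\R^{\n{e}}$; because every $r_e > 0$ and $x+1 > 1$, $F$ is also coercive of order $\|\qm\|^{x+1}$.

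Next, I would study the constrained problem
$$(\mathrm{P})\quad \min_{\qm\in\R^{\n{e}}}\; F(\qm) - \langle \Bm_{\Vr\cdot}^T\hm_\Vr,\qm\rangle \quad\text{subject to}\quad \Bm_{\Vc\cdot}\qm = -2\dm.$$
Theorem \ref{theorem_RankOfSubmatricesOfIncidenceMatrix} with $S = \Vc$ gives $\rk(\Bm_{\Vc\cdot}) = \n{c}$, so $\Bm_{\Vc\cdot}$ is surjective and the feasible set is a non-empty closed affine subspace. The objective inherits strict convexity and coercivity from $F$ (the linear tilt cannot spoil coercivity because $F$ grows superlinearly), so $(\mathrm{P})$ admits a unique minimizer $\qm^*$. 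Classical Lagrange-multiplier theory for affine constraints of full row rank then produces a multiplier $\hm_\Vc^*\in\R^{\n{c}}$ with
$$\nabla F(\qm^*) - \Bm_{\Vr\cdot}^T\hm_\Vr = \Bm_{\Vc\cdot}^T\hm_\Vc^*,$$
which, combined with feasibility, is exactly the pair \eqref{align_HydraulicPrinciple_ConservationOfEnergy_MatrixForm}--\eqref{align_HydraulicPrinciple_ConservationOfMass_MatrixForm} for the full head vector $\hm^* = [\hm_\Vr^T,(\hm_\Vc^*)^T]^T$. Uniqueness of $\hm_\Vc^*$ given $\qm^*$ follows from the injectivity of $\Bm_{\Vc\cdot}^T$ (again by Theorem \ref{theorem_RankOfSubmatricesOfIncidenceMatrix}).

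For the converse uniqueness direction, I would note that any tuple $(\hm_\Vc,\qm)$ making $(\hm,\dm,\qm)$ physically correct \gls{wrt} the side constraints is feasible for $(\mathrm{P})$ and satisfies the stationarity identity $f(\qm) - \Bm_{\Vr\cdot}^T\hm_\Vr = \Bm_{\Vc\cdot}^T\hm_\Vc$, which is precisely the first-order optimality condition for $(\mathrm{P})$ -- and is both necessary and sufficient thanks to strict convexity. Hence $\qm = \qm^*$, and then $\hm_\Vc$ is pinned down by injectivity of $\Bm_{\Vc\cdot}^T$. The main obstacle is the variational reinterpretation itself: one must recognize that $\Dm(\qm)\qm$ is a gradient field, identify the correct potential $F$, and see that mass conservation naturally plays the role of the affine constraint while the consumer heads $\hm_\Vc$ emerge as its Lagrange multipliers. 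Once this reframing is in place, the remaining steps are routine and invoke only Lemma \ref{lemma_TheNonLinearityDefinesAStrictilyMonotoneOperator} (for strict convexity of $F$) and Theorem \ref{theorem_RankOfSubmatricesOfIncidenceMatrix} (for surjectivity of $\Bm_{\Vc\cdot}$ and injectivity of $\Bm_{\Vc\cdot}^T$).
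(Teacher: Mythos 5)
Your proof is correct, but it takes a genuinely different route from the paper's. The paper treats the system directly as a monotone-operator equation: it first picks a particular solution $\qm_\dm$ of $\Bm_{\Vc \cdot}\qm = -2\dm$, then projects the energy equation onto $\ker(\Bm_{\Vc \cdot})$ and applies a finite-dimensional surjectivity theorem for continuous coercive maps (Theorem 16.4 in Schweizer) to the projected operator $\Psi$; the coercivity of $\Psi$ is established by a fairly delicate pointwise estimate on the integral representation from Lemma \ref{lemma_TheNonLinearityDefinesAStrictilyMonotoneOperator}, and uniqueness is argued separately via strict monotonicity. You instead observe that $f(\qm)=\Dm(\qm)\qm$ is the gradient of the potential $F(\qm)=\sum_{e\in E}\frac{r_e}{x+1}|q_e|^{x+1}$ and recast the whole system as the first-order optimality conditions of minimizing the tilted potential over the affine mass-conservation constraint, with the consumer heads emerging as Lagrange multipliers. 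Both arguments rest on the same two ingredients --- Lemma \ref{lemma_TheNonLinearityDefinesAStrictilyMonotoneOperator} (strict monotonicity of $f$, equivalently strict convexity of $F$) and Theorem \ref{theorem_RankOfSubmatricesOfIncidenceMatrix} (surjectivity of $\Bm_{\Vc \cdot}$, injectivity of $\Bm_{\Vc \cdot}^T$) --- but your variational reformulation buys a noticeably cleaner existence step: coercivity of the scalar functional $F(\qm)-\langle \Bm_{\Vr \cdot}^T\hm_\Vr,\qm\rangle$ is immediate from the superlinear growth of $|q_e|^{x+1}$, whereas the paper must bound the operator quotient by hand, and existence and uniqueness of $\qm$ drop out of a single strict-convexity statement rather than two separate arguments; this is essentially the classical \enquote{content} variational principle for pipe networks. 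The only points worth spelling out in a full write-up are the standard equivalences you invoke implicitly: that strict monotonicity of $\nabla F$ is equivalent to strict convexity of $F$, and that for a purely affine constraint no constraint qualification is needed, so that at the minimizer the gradient of the objective is orthogonal to $\ker(\Bm_{\Vc \cdot})$ and hence lies in $\im(\Bm_{\Vc \cdot}^T)$ automatically, while sufficiency of this stationarity condition for global optimality (used in your converse direction) follows from convexity of the objective over the affine feasible set.
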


\section{Conclusion}
\label{section_Conclusion}

In this work, we conducted an analysis on the existence and uniqueness of physically correct hydraulic states based on a variety of common given subsets of them. In dependence on which subset of states is known, this research question translates to the solvability of a \gls{SLE} or a \gls{SNLE}. 
In contrast to previous works, we did not work with a dual formulation and did not linearize appearing nonlinearities but analyzed the nonlinear hydraulic principles directly. 
Moreover, the results are network independent and purely theoretical. This avoids the necessity of computing any error-prone numerical or algebraic algorithm per choice of a \gls{WDS} \textit{and} subset of hydraulic states, which is linked to further knowledge such as an initial estimation of hydraulic heads.

Besides these benefits as compared to previous, algorithmic-based analyses, we actually proved that the before-mentioned hydraulic heads determine the whole hydraulic state through the hydraulic principles uniquely -- an assumption made and required in some of those previous works. 
Moreover, we identified the results of one work as special cases to one of our more general theorems, emphasizing the powerfulness of our results.

In conclusion, our work depicts a theoretically founded answer to questions which -- as we noticed during our literature review -- the water community has already considered settled. Moreover, it constitutes a possibility to further theoretical guarantees of hydraulic simulators that are affected directly by the question of the existence of a unique solution to the task they try to solve numerically.



\newpage
\setglossarystyle{list}
\printglossary[title=Glossary, nonumberlist]



\setglossarystyle{list}
\printglossary[type=\acronymtype, title=List of Abbreviations, nonumberlist]



\newpage






\newpage
\pagenumbering{Roman}
\appendix

\section*{Organization of the Supplementary Material}
\startcontents[sections]
\printcontents[sections]{l}{1}{\setcounter{tocdepth}{3}}

\section{Proofs}

\subsection{Proof of Lemma \ref{lemma_RedundancyOfHydraulicPrinciples_ConservationOfFlows}}

\begin{lemmaNoNb}[Lemma \ref{lemma_RedundancyOfHydraulicPrinciples_ConservationOfFlows}]
    The hydraulic states $(\hm,\qm,\dm)$ of a \gls{WDS} $\G = (V,E)$ are \textit{physically correct} \gls{iff} they satisfy 
    the conservation of energy (Definition \ref{definition_HydraulicPrinciples_ConservationOfEnergy}) and
    the conservation of mass (Definition \ref{definition_HydraulicPrinciples_ConservationOfMass}),
    i.e.,
    the properties \eqref{align_HydraulicPrinciples_ConservationOfEnergy} and \eqref{align_HydraulicPrinciples_ConservationOfMass}.
\end{lemmaNoNb}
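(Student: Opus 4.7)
The forward direction is trivial, since physical correctness by Definition \ref{definition_PhysicallyCorrectHydraulicStates} subsumes the conservation of energy and mass. The content of the lemma is therefore the reverse direction: energy conservation \eqref{align_HydraulicPrinciple_ConservationOfEnergy} together with mass conservation \eqref{align_HydraulicPrinciple_ConservationOfMass} implies the flow conservation $q_{uv} = -q_{vu}$ for every neighboring pair $v, u$. My plan is to derive this solely from the energy equation, which is possible precisely because we are modeling a \gls{WDS} as a \emph{symmetric} directed graph, so the energy equation can be written twice: once along $e_{vu}$ and once along $e_{uv}$.

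The first step is to apply \eqref{align_HydraulicPrinciple_ConservationOfEnergy} in both orientations for an arbitrary pair $v \in V$, $u \in \Nbh(v)$, obtaining
\begin{align*}
    h_v - h_u &= r_{vu}\, q_{vu}\, |q_{vu}|^{x-1}, \\
    h_u - h_v &= r_{uv}\, q_{uv}\, |q_{uv}|^{x-1}.
\end{align*}
Adding these two equations, and invoking the symmetry $r_{vu} = r_{uv}$ (which in turn follows from the symmetry of the pipe features $l_{vu}, \dia_{vu}, c_{vu}$ stated in Definition \ref{definition_HydraulicPrinciples_ConservationOfEnergy}), I get
\begin{align*}
    0 = r_{vu}\bigl(q_{vu}\, |q_{vu}|^{x-1} + q_{uv}\, |q_{uv}|^{x-1}\bigr).
\end{align*}
Since $r_{vu} > 0$, this reduces to the algebraic identity $\varphi(q_{vu}) = -\varphi(q_{uv})$ for the scalar function $\varphi: \R \to \R$, $t \mapsto t\,|t|^{x-1}$.

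The second step is to observe that $\varphi$ is an odd, strictly increasing bijection of $\R$ onto $\R$: it is odd by inspection, and strictly increasing since $x = 1.852 > 1$ makes $\varphi$ differentiable with derivative $x\,|t|^{x-1} \geq 0$, vanishing only at $t = 0$. Therefore $\varphi$ is injective, so $\varphi(q_{vu}) = \varphi(-q_{uv})$ implies $q_{vu} = -q_{uv}$, which is exactly the conservation of flows \eqref{align_HydraulicPrinciple_ConservationOfFlows}. The main, and essentially only, subtle point of the argument is the use of the symmetry $r_{vu} = r_{uv}$; this is where the assumption that the \gls{WDS} is a symmetric directed graph (with symmetric pipe features) is crucial, which is precisely the caveat noted in Remark \ref{remark_RedundancyOfHydraulicPrinciple_ConservationOfFlows}. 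The conservation of mass does not enter the argument at all — it is redundant \emph{for} the lemma's purpose, though of course needed for physical correctness overall.
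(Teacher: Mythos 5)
Your proof is correct and follows essentially the same route as the paper's: the forward direction is trivial, and the reverse direction derives the conservation of flows from the conservation of energy written along both $e_{vu}$ and $e_{uv}$ together with the symmetry $r_{vu} = r_{uv}$, never using the conservation of mass. The only (cosmetic) difference is that you conclude in one step via the injectivity of the odd, strictly increasing map $t \mapsto t\,|t|^{x-1}$, whereas the paper first extracts $|q_{vu}| = |q_{uv}|$ and then handles the signs by a case distinction on $q_{vu} = 0$ versus $q_{vu} \neq 0$; your version is slightly cleaner but mathematically equivalent.
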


\begin{proof}
    \enquote{$\Rightarrow$}
    If the hydraulic states $(\hm,\qm,\dm)$ are \textit{physically correct}, they trivially satisfy 
    the conservation of energy and
    the conservation of mass.
    \\
    \\
    \enquote{$\Leftarrow$}
    If in contrast, the hydraulic states $(\hm,\qm,\dm)$ satisfy 
    the conservation of energy and
    the conservation of mass, 
    we need to show that they already satisfy 
    the conservation of flows, i.e., 
    property \eqref{align_HydraulicPrinciples_ConservationOfFlows}.
    
    To do so, let $v \in V$ and $u \in \Nbh(v)$ hold. By the definition of the neighborhood, $e_{vu} \in E$ or $e_{uv} \in E$ holds, and since the graph $\G = (V,E)$ is a symmetric directed graph (cf. Definition \ref{definition_WaterDistributionSystem_SymmetricDirected}), we even obtain $e_{vu} \in E$ \textit{and} $e_{uv} \in E$.
    Therefore, by
    (1) the conservation of energy (Definition \ref{definition_HydraulicPrinciples_ConservationOfEnergy}),
    (2) basic transformations and
    (3) the fact that the geometric pipe features are symmetric in the sense that $r_{vu} = r_{uv} > 0$ holds for all $v \in V$ and $u \in \Nbh(v)$,
    we
    obtain
    \begin{align}
    \label{align_intheorem_RedundancyOfConservationOfFlows}
    \begin{split}
        r_{vu} \sgn(q_{vu}) |q_{vu}|^x 
        \overset{\text{(1)}}{=}&~
        h_v - h_u
        \\
        \overset{\text{(2)}}{=}&~
        - (h_u - h_v)
        \\
        \overset{\text{(1)}}{=}&~
        - r_{uv} \sgn(q_{uv}) |q_{uv}|^x
        \\
        \overset{\text{(3)}}{=}&~
        - r_{vu} \sgn(q_{uv}) |q_{uv}|^x.
    \end{split}
    \end{align}

    \noindent By 
    dividing both sides of Equation \eqref{align_intheorem_RedundancyOfConservationOfFlows} by $r_{vu} > 0$, 
    taking the absolute value and 
    using the fact that the function $\R_{\geq 0} \rightarrow \R_{\geq 0}, t \mapsto t^x$ is injective, 
    we obtain 
    $|q_{vu}| = |q_{uv}|$.

    If  $q_{vu} = 0$ holds, we can conclude that $q_{uv} = 0$ holds by the positive definiteness of the absolute value. 
    Consequently, we obtain $q_{vu} = 0 = - 0 = - q_{uv}$.

    If $q_{vu} \neq 0$ holds, we can divide both sides of Equation \eqref{align_intheorem_RedundancyOfConservationOfFlows} by $r_{vu} > 0$ and $|q_{vu}|^x > 0$ and obtain
    $\sgn(q_{vu}) = - \sgn(q_{uv})$.
    Bringing it all together,
    \begin{align*}
        \sgn(q_{vu}) ~ q_{vu}
        =
        |q_{vu}|
        =
        |q_{uv}|
        = 
        \sgn(q_{uv}) ~ q_{uv}
        =
        - \sgn(q_{vu}) ~ q_{uv}
    \end{align*}

    \noindent holds.
    Similarly, by dividing both sides of the equation by $\sgn(q_{vu}) \neq 0$, 
    we obtain $q_{vu} = -q_{uv}$.
\end{proof}

\subsection{Proof of Theorem \ref{theorem_HydraulicPrinciples_MatrixFormulation}}

\begin{theoremNoNb}[Theorem  \ref{theorem_HydraulicPrinciples_MatrixFormulation}]
    The hydraulic states $(\hm,\qm,\dm)$ of a \gls{WDS} $\G = (V,E)$ are physically correct \gls{iff}
    \begin{align}
    \label{align_HydraulicPrinciples_ConservationOfEnergy_MatrixForm_appendix}
        \Bm^T \hm &= \Dm \qm, \\
    \label{align_HydraulicPrinciples_ConservationOfMass_MatrixForm_appendix}
        \Bm_{\Vc \cdot} \qm &= - 2\dm
    \end{align}

    \noindent holds, where $\Bm$ is the incidence matrix
    \begin{align*}
        \Bm 
        :=
        \left(
        \1_{\{ \exists u \in V: ~ e = e_{vu} \}} - \1_{\{ \exists u \in V: ~ e = e_{uv} \}}
        \right)_{ \substack{v \in V \\ e \in E} }
        \in 
        \R^{\n{n} \times \n{e}}
        \text{ of } \G,
    \end{align*}

    \noindent $\Bm_{\Vc \cdot}$ is the incidence matrix $\Bm$ limited to only the rows corresponding to the consumer nodes $\Vc$
    and $\Dm = \Dm(\qm)$ is the flow-dependent diagonal matrix of nonlinearities
    \begin{align*}
        \Dm
        =
        \left(
        \delta_{e_1 e_2} \cdot r_{e_1} |q_{e_1}|^{x-1}
        \right)_{ \substack{e_1 \in E \\ e_2 \in E} }
    \end{align*}

    \noindent 
    with $\delta$ the Kronecker delta, $x$ the Hazen-Williams constant and $r_e$ and $q_e$ the resistance coefficient of and the flow along the edge $e \in E$, respectively (cf. Definition \ref{align_HydraulicPrinciples_ConservationOfEnergy}).
\end{theoremNoNb}

\begin{proof}
    By Lemma \ref{lemma_RedundancyOfHydraulicPrinciples_ConservationOfFlows}, 
    the hydraulic states $(\hm,\qm,\dm)$ are physically correct \gls{iff} they satisfy the properties \eqref{align_HydraulicPrinciples_ConservationOfEnergy} and \eqref{align_HydraulicPrinciples_ConservationOfMass}, i.e.,
    \gls{iff}
    \begin{align*}
        h_v - h_u 
        &= 
        r_{vu} \sgn(q_{vu}) |q_{vu}|^x 
        = 
        r_{vu} q_{vu} |q_{vu}|^{x-1}
        & \quad & \text{for all } e = e_{vu} \in E \text{ and}\\
        \sum_{u \in \mathcal{N}(v)} q_{vu} 
        &= 
        - d_v 
        & \quad & \text{for all } v \in \Vc
    \end{align*}

    \noindent hold. Especially to mention, Lemma \ref{lemma_RedundancyOfHydraulicPrinciples_ConservationOfFlows} states that a triple $(\hm,\qm,\dm)$ that satisfies the properties \eqref{align_HydraulicPrinciples_ConservationOfEnergy} and \eqref{align_HydraulicPrinciples_ConservationOfMass} already satisfies the conservation of flows, that is, 
    \begin{align*}
        (\hm,\qm,\dm)
        \in
        U 
        := 
        \{ (\hm,\qm,\dm) \in \R^{\n{c} \times \n{e}} ~|~ q_{uv} = -q_{vu} ~\forall v \in V \text{ and } u \in \Nbh(v)\}
    \end{align*}

    \noindent holds with $U$ the subspace on which the conservation of flows (Definition \ref{definition_HydraulicPrinciples_ConservationOfFlows}) holds.
    Consequently, it suffices to show that Equation \eqref{align_HydraulicPrinciples_ConservationOfEnergy_MatrixForm_appendix} and \eqref{align_HydraulicPrinciples_ConservationOfMass_MatrixForm_appendix} are the matrix-formulations of Equation \eqref{align_HydraulicPrinciples_ConservationOfEnergy} and \eqref{align_HydraulicPrinciples_ConservationOfMass}, respectively, given that $(\hm,\qm,\dm) \in U$ holds.
    
    By 
    (1) definition of the matrices and vectors (cf. Definition \ref{definition_HydraulicStates}),
    (2) matrix multiplication,
    (3) basic transformations and 
    (4) eventually renaming the variables,
    \begin{align*}
        \Bm^T \hm
        \overset{\text{(1)}}{=}&~
        \left(
        \1_{\{ \exists u \in V: ~ e = e_{vu} \}} - \1_{\{ \exists u \in V: ~ e = e_{uv} \}}
        \right)_{ \substack{e \in E \\ v \in V} }
        ~
        (h_v)_{v \in V}
        \\
        \overset{\text{(2)}}{=}&~
        \left(
        \sum_{v \in V}
        h_v
        \cdot
        \left(
        \1_{\{ \exists u \in V: ~ e = e_{vu} \}} 
        -
        \1_{\{ \exists u \in V: ~ e = e_{uv} \}}
        \right)
        \right)_{e \in E}
        \\
        \overset{\text{(3,4)}}{=}&~
        \left(
        \sum_{v \in V}
        h_v
        \cdot 
        \1_{\{ \exists u \in V: ~ e = e_{vu} \}} 
        -
        \sum_{u \in V}
        h_v
        \cdot
        \1_{\{ \exists v \in V: ~ e = e_{vu} \}}
        \right)_{e = e_{vu} \in E}
        \\
        \overset{\text{(3)}}{=}&~
        \left(
        h_v
        -
        h_u
        \right)_{e_{vu} \in E}
    \end{align*}

    \noindent and
    \begin{align*}
        \Dm \qm
        \overset{\text{(1)}}{=}&~
        \left(
        \delta_{e_1 e_2} \cdot r_{e_1} |q_{e_1}|^{x-1}
        \right)_{ \substack{e_1 \in E \\ e_2 \in E} }
        ~
        (q_{e_2})_{e_2 \in E}
        \\
        \overset{\text{(2)}}{=}&~
        \left(
        \sum_{e_2 \in E}
        \delta_{e_1 e_2} \cdot r_{e_1} q_{e_2} |q_{e_1}|^{x-1}
        \right)_{e_1 \in E}
        \\
        \overset{\text{(3)}}{=}&~
        \left(
        r_{e_1} q_{e_1} |q_{e_1}|^{x-1}
        \right)_{e_1 \in E}
        \\
        \overset{\text{(4)}}{=}&~
        \left(
        r_{e_{vu}} q_{vu} |q_{vu}|^{x-1}
        \right)_{e_{vu} \in E}
    \end{align*}

    \noindent represent the first property \eqref{align_HydraulicPrinciples_ConservationOfEnergy} (conservation of energy, Definition \ref{definition_HydraulicPrinciples_ConservationOfEnergy}).
    Moreover, by introducing the modified incidence matrix (where we drop all values equal to -1)\footnote{
        For clarity, the matrix $\Bmtilde_{\Vc \cdot}$ is the matrix $\Bmtilde$ limited to only those rows corresponding to consumer nodes $v \in \Vc$.
    }
    \begin{align*}
        \Bmtilde 
        :=
        \left(
        \1_{\{ \exists u \in V: ~ e = e_{vu} \}} 
        \right)_{ \substack{v \in V \\ e \in E} }
        \in 
        \R^{\n{n} \times \n{e}}
        \text{ of } \G
    \end{align*}
    
    \noindent and by additionally using
    (5) the definition of neighborhoods (cf. Remark \ref{definition_Neighborhoods}) and
    (6) the fact that the graph $\G$ is symmetric directed (cf. Definition \ref{definition_WaterDistributionSystem_SymmetricDirected}),
    \begin{align*}
        \Bmtilde_{\Vc \cdot} \qm
        \overset{\text{(1)}}{=}&~
        \left(
        \1_{\{ \exists u \in V: ~ e = e_{vu} \}} 
        \right)_{ \substack{v \in \Vc \\ e \in E} }
        ~
        (q_e)_{e \in E}
        \\
        \overset{\text{(2)}}{=}&~
        \left(
        \sum_{e \in E}
        q_e 
        \cdot
        \1_{\{ \exists u \in V: ~ e = e_{vu} \}} 
        \right)_{v \in \Vc}
        \\
        \overset{\text{(5)}}{=}&~
        \left(
        \sum_{u \in \Nbhout(v)}
        q_{vu} 
        \right)_{v \in \Vc}
        \\
        \overset{\text{(6)}}{=}&~
        \left(
        \sum_{u \in \Nbh(v)}
        q_{vu} 
        \right)_{v \in \Vc}
    \end{align*}
    
    \noindent and $-\dm$ represent the second property \eqref{align_HydraulicPrinciples_ConservationOfMass} (conservation of mass, Definition \ref{definition_HydraulicPrinciples_ConservationOfMass}).

    Finally, in order to get rid of the rather unknown modified incidence matrix $\Bmtilde$, we show that $2 \Bm_{\Vc \cdot} \qm = \Bm_{\Vc \cdot} \qm$ holds for all $(\hm,\qm,\dm) \in U$:
    Along similar lines, 
    by additionally using
    (7) the choice of $(\hm,\qm,\dm) \in U$,
    we obtain

    \begin{align*}
        \Bm_{\Vc \cdot} \qm
        \overset{\text{(1)}}{=}&~
        \left(
        \1_{\{ \exists u \in V: ~ e = e_{vu} \}} - \1_{\{ \exists u \in V: ~ e = e_{uv} \}}
        \right)_{ \substack{v \in \Vc \\ e \in E} }
        ~
        (q_e)_{e \in E}
        \\
        \overset{\text{(2)}}{=}&~
        \left(
        \sum_{e \in E}
        q_e 
        \cdot
        \left(
        \1_{\{ \exists u \in V: ~ e = e_{vu} \}} 
        -
        \1_{\{ \exists u \in V: ~ e = e_{uv} \}}
        \right)
        \right)_{v \in \Vc}
        \\
        \overset{\text{(3)}}{=}&~
        \left(
        \sum_{e \in E}
        q_e
        \cdot 
        \1_{\{ \exists u \in V: ~ e = e_{vu} \}} 
        -
        \sum_{e \in E}
        q_e
        \cdot
        \1_{\{ \exists u \in V: ~ e = e_{uv} \}}
        \right)_{v \in \Vc}
        \\
        \overset{\text{(5)}}{=}&~
        \left(
        \sum_{u \in \Nbhout(v)}
        q_{vu} 
        -
        \sum_{u \in \Nbhin(v)}
        q_{uv} 
        \right)_{v \in \Vc}
        \\
        \overset{\text{(6)}}{=}&~
        \left(
        \sum_{u \in \Nbh(v)}
        q_{vu} 
        -
        \sum_{u \in \Nbh(v)}
        q_{uv} 
        \right)_{v \in \Vc}
        \\
        \overset{\text{(7)}}{=}&~
        \left(
        \sum_{u \in \Nbh(v)}
        q_{vu} 
        +
        \sum_{u \in \Nbh(v)}
        q_{vu} 
        \right)_{v \in \Vc}
        \\
        \overset{\text{(3)}}{=}&~
        \left(
        2
        \sum_{u \in \Nbh(v)}
        q_{vu} 
        \right)_{v \in \Vc}
        \\
        =&~
        2 \Bmtilde_{\Vc \cdot} \qm,
    \end{align*}

    \noindent which proves the equality.
    Since in either direction, by Lemma \ref{lemma_RedundancyOfHydraulicPrinciples_ConservationOfFlows}, $(\hm,\qm,\dm) \in U$ holds, this is all what was left to show.
\end{proof}

\subsection{Proof of Theorem \ref{theorem_RankOfSubmatricesOfTheIncidenceMatrix}}

\begin{theoremNoNb}[Theorem \ref{theorem_RankOfSubmatricesOfTheIncidenceMatrix}]
    Let $\G = (V,E)$ be a finite, connected graph.
    Let $S \subsetneq V$ be a non-empty subset of nodes. 
    Let $\Bm \in \R^{\n{n} \times \n{e}}$ be the incidence matrix (as defined in Theorem \ref{theorem_HydraulicPrinciples_MatrixFormulation}).
    Then $\rk(\Bm_{S \cdot}) = |S|$ holds.
\end{theoremNoNb}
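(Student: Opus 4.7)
The plan is to prove that the $|S|$ rows $(\Bm_{v\cdot})_{v \in S}$ of the directed incidence matrix are linearly independent, which --- since trivially $\rk(\Bm_{S \cdot}) \leq |S|$ --- immediately yields the claimed equality. The argument rests on two structural features: every column of $\Bm$ contains exactly one entry $+1$ (at the source) and exactly one entry $-1$ (at the target) of the corresponding directed edge, and the graph $g$ is connected. The critical hypothesis is $S \subsetneq V$, which guarantees that $V \setminus S \neq \emptyset$; this is really the only place where the strictness of the inclusion is used.

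Concretely, I would start from a hypothetical linear relation $\sum_{v \in S} \alpha_v \Bm_{v \cdot} = 0$ with coefficients $\alpha_v \in \R$ and extend these coefficients to a vector on all of $V$ by setting $\alpha_v := 0$ for every $v \in V \setminus S$, so that $\sum_{v \in V} \alpha_v \Bm_{v \cdot} = 0$ still holds. Reading off the $e$-th coordinate of this vanishing combination for each edge $e = e_{vu} \in E$ would then give $\alpha_v - \alpha_u = 0$ (only the two endpoint rows contribute), i.e.\ $\alpha_v = \alpha_u$ whenever $v$ and $u$ are adjacent. Connectivity of $g$ propagates this equality along a path between any two vertices (a short induction on path length), forcing $\alpha$ to be constant on $V$; and since this constant is forced to vanish on the non-empty set $V \setminus S$, we conclude $\alpha \equiv 0$, in particular $\alpha_v = 0$ for all $v \in S$. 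This establishes linear independence and hence the claim.

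There is no substantial obstacle here: the argument is essentially a refinement of the classical fact that the incidence matrix of a connected graph on $\n{n}$ nodes has rank $\n{n} - 1$, with any element of $V \setminus S$ playing the role of the ``pinned'' vertex whose row is redundant. The only step that requires a touch of care is the column-reading, where one must correctly account for the definition of $\Bm$, namely that $\Bm_{v, e}$ is $+1$ or $-1$ depending on whether $e$ is directed out of or into $v$; possible self-loops, should the general graph-theoretic statement allow them, produce identically zero columns and thus cause no issue. No assumption on $g$ beyond finiteness and connectedness is used, so the theorem applies cleanly to the WDS setting where it will later be specialized to $S = \Vc$.
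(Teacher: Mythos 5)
Your proof is correct, but it takes a cleaner route than the paper's. The paper fixes one node $v \in V \setminus S$, deletes its row, and shows the remaining rows are independent by an induction on the $k$-hop distance from $v$: the edge joining a node $u$ to an already-handled node isolates the summand $\lambda_u b_{ue}$ and forces $\lambda_u = 0$, propagating outward. It then has to iterate this deletion for every node of $V \setminus S$ and separately handle the complication that, after several deletions, some columns of the remaining submatrix may become identically zero (its \enquote{observation 2} step). Your extension-by-zero argument sidesteps all of this: padding the coefficients with $\alpha_v := 0$ on $V \setminus S$ and reading off the coordinate of each edge $e_{vu}$ yields $\alpha_v = \alpha_u$, so connectivity forces $\alpha$ to be constant on $V$, and the non-emptiness of $V \setminus S$ (the only place $S \subsetneq V$ enters, as you correctly note) pins that constant to zero. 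This reduces the statement in one shot to the classical characterization of the left kernel of the incidence matrix of a connected graph as the constant vectors, handles arbitrary $S$ without any iterative row removal, and makes the degenerate cases (zero columns, self-loops) manifestly harmless. Both arguments ultimately propagate a condition along paths using connectivity, but yours is the more economical packaging and I see no gap in it.
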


\begin{proof}
    A well-known property of the incidence matrix is that the entries of each column $\Bm_{\cdot e}$ for each corresponding edge $e \in E$ add up to zero. 
    Based on that, a common result in graph theory is that taking the sum $\sum_{v \in V} \Bm_{v \cdot}$ over the row vectors $\Bm_{v \cdot}$ for corresponding nodes $v \in V$ yields the zero vector and that $\rk(\Bm) = \n{n} - 1$ holds (cf. \cite{Bapat2014Graphtheory}).
    We make the following observations to prove an even stronger statement:

    \begin{itemize}
        \item \textit{Observation 1:}
        By definition of the incidence matrix, each column $\Bm_{\cdot e}$ corresponding to an edge $e \in E$ holds exactly one 1, one -1 and the remaining entries are 0. 
        More precisely, for each $e \in E$ there exists exactly one pair of (neighboring) nodes $v,u \in V$, $v \neq u$, such that $\Bm_{ve} = b_{ve} = 1 \neq 0$, $\Bm_{ue} = b_{ue} = - b_{ve} = -1 \neq 0$ and $\Bm_{we} = b_{we} = 0$ holds for all $w \in V \setminus \{v,u\}$.\footnote{
            If $e = e_{vu} \in E$, the pair we are looking for is $v,u \in V$. 
        }

        \item \textit{Observation 2:}
        By definition of the incidence matrix and due to the fact that the graph $\G$ is connected, each row $\Bm_{v \cdot}$ corresponding to a node $v \in V$ holds at least one entry not being equal to zero. 
        More precisely, for each $v \in V$ there exists an edge $e \in E$, such that $\Bm_{ve} = b_{ve} \neq 0$ holds.
    \end{itemize}
    
    \noindent We will first show that if we remove a row $\Bm_{v \cdot}$ corresponding to a node $v \in V$ from the incidence matrix $\Bm$, the rank of the remaining incidence matrix $\Bm_{V \setminus \{v\} \cdot}$ remains unchanged, or, in other words, that the $\n{n} - 1$ remaining rows remain linearly independent.

    Therefore, we assume that
    \begin{align}
    \label{align_intheorem_RankOfSubmatricesOfTheIncidenceMatrix_LinearCombination}
        \sum_{u \in V \setminus \{v\}} \lambda_u \Bm_{u \cdot}
        =
        \sum_{u \in V \setminus \{v\}} \lambda_u \left( b_{ue} \right)_{e \in E}
        =
        \zerom{\n{e}}
    \end{align}

    \noindent holds for $\lambda_u \in \R$ for all $u \in V \setminus \{v\}$.
    We now show by induction to $k \in \N$, where $k$ indicates the $k$-hop neighborhood $\Nbh_k(v) = \{ u \in V ~|~ d(v,u) = k\}$ of $v \in V$, that $\lambda_u = 0$ needs hold for all $u \in V \setminus \{v\}$:
    \\
    \\\textit{Induction base:}
    Let $k = 1$ and $u \in \Nbh_1(v) = \Nbh(v) = \{ u \in V ~|~ \exists e \in E: ~ e = e_{vu} \text{ or } e = e_{uv} \}$ (cf. Remark \ref{definition_Neighborhoods}) hold.
    Due to \textit{observation 1}, there exists an edge $e \in E$ (namely, the edge $e = e_{vu}$ or $e = e_{uv}$ that connects $v$ and $u$) such that $b_{ue} = - b_{ve} \neq 0$ and $b_{we} = 0$ holds for all $w \in V \setminus \{v,u\}$.
    Consequently, the summand $\lambda_u b_{ue}$ is the one of all summands $\{ \lambda_w b_{we} ~|~ w \in V \setminus \{v\}\}$ that is potentially is non-zero.
    Therefore, in order to 
    \begin{align*}
        \sum_{w \in V \setminus \{v\}} \lambda_w b_{we}
        =
        \lambda_u 
        \underbrace{b_{ue}}_{\neq 0}
        =
        0
    \end{align*}

    \noindent to be satisfied, $\lambda_u = 0$ needs to hold.
    \\
    \\\textit{Induction hypothesis:}
    Let $k \in \N$ and $u \in \Nbh_k(v)$ hold. 
    Then $\lambda_u = 0$ holds.
    \\
    \\\textit{Induction step:}
    Let $u \in \Nbh_{k+1}(v)$ hold. 
    Since the graph $\G$ is connected, there exists a $\tilde{u} \in \Nbh(u) = \Nbh_1(u)$ such that $\tilde{u} \in \Nbh_k(v)$ holds.
    An analogous argument as in the \textit{Induction base} shows that
    due to \textit{observation 1}, there exists an edge $e \in E$ (namely, the edge $e = e_{\tilde{u} u}$ or $e = e_{u \tilde{u}}$ that connects $\tilde{u}$ and $u$) such that $b_{ue} = - b_{\tilde{u} e} \neq 0$ and $b_{we} = 0$ holds for all $w \in V \setminus \{\tilde{u},u\}$.
    Additionally, according to the \textit{Induction hypotheses}, $\lambda_{\tilde{u}} = 0$ and thus $\lambda_{\tilde{u}} b_{\tilde{u} e} = 0$ holds.
    Consequently, the summand $\lambda_u b_{ue}$ is the only one of all summands $\{ \lambda_w b_{we} ~|~ w \in V \setminus \{v\}\}$ that is potentially is non-zero.
    Therefore, we can again conclude that $\lambda_u = 0$ needs to hold.
    \\
    \\In conclusion, we can iteratively conclude that in order for Equation \eqref{align_intheorem_RankOfSubmatricesOfTheIncidenceMatrix_LinearCombination} to be satisfied, $\lambda_u = 0$ needs to hold for all $u \in V \setminus \{v\}$. 
    Therefore, 
    if we remove a row $\Bm_{v \cdot}$ corresponding to a node $v \in V$ from the incidence matrix $\Bm$, the $\n{n} - 1$ remaining rows remain linearly independent and by this, the rank of the remaining incidence matrix $\Bm_{V \setminus \{v\} \cdot}$ remains unchanged.  
    \\
    \\Finally, along similar lines, we can argue that removing multiple rows $\Bm_{v \cdot}$ repeatedly corresponding to multiple nodes $v \in S^C = V \setminus (S \setminus V)$ does not change that, i.e., leaves linearly independent rows.
    In fact, the only thing that can change in the argumentation above is that eventually, due to the removal of previous rows, there can exist columns $\Bm_{\cdot e}$ corresponding to edges $e \in E$ in the remaining incidence matrix $\Bm_{V \setminus S^C \cdot}$ that consists of zeros only.\footnote{
        More intuitively, these columns correspond to edges $e = e_{vu} \in E$ of which both nodes $v, u \in V$ have been removed already in previous steps.
    } 
    More precisely, there can exist edges $e \in E$ for which all summands $\{ \lambda_w b_{we} ~|~ w \in V \setminus S^C\}$ are zero already.
    However, by \textit{observation 2}, for each node $u \in V \setminus S^C$ and by this, for each summand $\lambda_u \Bm_{u \cdot}$, there exists an(other) edge $e \in E$, such that $\Bm_{ve} = b_{ve} \neq 0$ holds and which -- analogously to the lines above -- will enforce $\lambda_u = 0$ to hold.
    \\
    \\In summary, after removing multiple rows repeatedly according to multiple nodes $S^C = V \setminus (S \setminus V)$, the remaining $|S|$ rows remain linearly independent rows. 
    This causes $\Bm_{V \setminus S^C} = \Bm_{V \setminus (V \setminus S) \cdot} = \Bm_{S \cdot} \in \R^{|S| \times \n{e}}$ to have full rank $\rk(\Bm_{S \cdot}) = |S|$.
\end{proof}

\subsection{Proof of Corollary \ref{corollary_DecompositionOfTheInciddenceMatrix}}

\begin{corollaryNoNb}[Corollary \ref{corollary_DecompositionOfTheInciddenceMatrix}]
    Let $\G = (V,E)$ be a finite, connected graph.
    Let $S \subsetneq V$ be a non-empty subset of nodes. 
    Let $\Bm \in \R^{\n{n} \times \n{e}}$ be the incidence matrix (as defined in Theorem \ref{theorem_HydraulicPrinciples_MatrixFormulation}).
    Then there exist two subsets $\Ei \subset E$ and $\Ed \subset E$ of edges, such that
    
    \begin{enumerate}
        \item $|\Ei| = |S| =: \n{s}$, $|\Ed| = |E| - |S| = \n{e} - \n{s}$ and $E = \Ei \sqcup \Ed$ hold,
        
        \item $\rk(\Bm_{S \cdot}) = \rk(\Bm_{S \Ei}) = \n{s}$ holds,
        
        \item $\Bm_{S \Ei} \in \R^{\n{s} \times \n{s}}$ is invertible and

        \item $\rk(\Bm_{\cdot \Ei}) = \n{s}$ holds.
    \end{enumerate} 

    \noindent More precisely, 
    $\Ei$ is a subset of edges $E$ 
    whose corresponding columns $\Bm_{S \Ei}$ 
    in the limited incidence matrix $\Bm_{S \cdot}$ 
    are (a possible choice of) $\n{s} = |S|$ linearly \textbf{i}ndependent ones; 
    $\Ed = E \setminus \Ei$ is the subset of edges $E$
    whose corresponding columns $\Bm_{S \Ed}$ 
    in the incidence matrix $\Bm_{S \cdot}$ are the $\n{e} - \n{s} = |E| - |S|$ remaining linearly \textbf{d}ependent ones. 
    Even more, 
    $\Ei$ is also a subset of edges $E$
    whose corresponding columns $\Bm_{\cdot \Ei}$
    in the incidence matrix $\Bm$
    are $\n{s}$ of at most $\n{n} - 1$ linearly independent ones.
\end{corollaryNoNb}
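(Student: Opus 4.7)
The plan is to derive the corollary as a direct consequence of Theorem \ref{theorem_RankOfSubmatricesOfIncidenceMatrix} combined with standard linear algebra. Since that theorem already furnishes $\rk(\Bm_{S\cdot}) = |S| = \n{s}$, the entire content of the statement reduces to choosing the index set $\Ei$ in the right way and then verifying the four claims in order. I would begin by invoking the equality of row rank and column rank to deduce that $\Bm_{S\cdot} \in \R^{\n{s} \times \n{e}}$ possesses $\n{s}$ linearly independent columns; any such selection defines $\Ei$, and I would set $\Ed := E \setminus \Ei$.

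With this choice, claims 1 through 3 follow almost immediately. The cardinalities $|\Ei| = \n{s}$, $|\Ed| = \n{e} - \n{s}$ and the disjoint partition $E = \Ei \sqcup \Ed$ hold by construction, so property 1 is established. Property 2 is just a restatement that the chosen columns span the column space of $\Bm_{S\cdot}$, hence $\rk(\Bm_{S\Ei}) = \n{s} = \rk(\Bm_{S\cdot})$. Property 3 follows because $\Bm_{S\Ei}$ is a square $\n{s} \times \n{s}$ matrix of full rank and is therefore invertible.

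The only step with a small wrinkle is property 4, which concerns the tall matrix $\Bm_{\cdot\Ei} \in \R^{\n{n} \times \n{s}}$ rather than its row-restriction $\Bm_{S\Ei}$. Here I would use that deleting rows can only decrease the rank, so $\rk(\Bm_{\cdot\Ei}) \geq \rk(\Bm_{S\Ei}) = \n{s}$, while the trivial upper bound $\rk(\Bm_{\cdot\Ei}) \leq \n{s}$ (it has only $\n{s}$ columns) yields equality. The auxiliary observation appearing in the corollary's prose that $\n{s}$ is at most $\n{n} - 1$ follows from $S \subsetneq V$ together with the classical fact that $\rk(\Bm) = \n{n} - 1$ for a finite connected graph (which was already invoked at the start of the proof of Theorem \ref{theorem_RankOfSubmatricesOfIncidenceMatrix}). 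Overall, I do not anticipate any genuine obstacle: the substantive work sits in Theorem \ref{theorem_RankOfSubmatricesOfIncidenceMatrix}, and what remains is essentially bookkeeping to name the sets and to package the row-rank versus column-rank identities correctly.
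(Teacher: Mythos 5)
Your proposal is correct and follows essentially the same route as the paper: invoke Theorem \ref{theorem_RankOfSubmatricesOfIncidenceMatrix}, select $\n{s}$ linearly independent columns of $\Bm_{S\cdot}$ to define $\Ei$, and read off claims 1--3 by construction and the square-full-rank criterion. The only cosmetic difference is in claim 4, where you argue directly via monotonicity of rank under row deletion ($\rk(\Bm_{\cdot\Ei}) \geq \rk(\Bm_{S\Ei}) = \n{s}$ together with the trivial upper bound), while the paper reaches the same conclusion by contradiction, restricting a hypothetical linear dependence among the columns of $\Bm_{\cdot\Ei}$ to the rows indexed by $S$ -- these are the same fact packaged differently.
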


\begin{proof}
    By Theorem \ref{theorem_RankOfSubmatricesOfTheIncidenceMatrix}, $\rk(\Bm_{S \cdot}) = |S| =: \n{s}$ holds, that is, the matrix $\Bm_{S \cdot} \in \R^{\n{s} \times \n{e}}$ has $\n{s}$ linearly independent columns.
    Therefore, we can divide the $\n{e}$ columns of the matrix $\Bm_{S \cdot} \in \R^{\n{s} \times \n{e}} = \R^{\n{s} \times (\n{s} + (\n{e}-\n{s}))}$ into $\n{s}$ linearly independent columns and $\n{e} - \n{s}$ remaining columns.\footnote{
        Note that for the matrix $\Bm_{S \cdot} \in \R^{\n{s} \times \n{e}}$, $\n{s} = \rk(\Bm_{S \cdot}) \leq \min\{\n{s},\n{e}\} \leq \n{s}$ holds, i.e., $\n{s} = \min\{\n{s},\n{e}\} \leq \n{e}$.
        Intuitively, this follows from the connectedness of $\G$ and the choice of $S \subsetneq V$:

        Since $\G$ is a connected graph $\n{n} - 1 \leq \n{e}$ holds.
        Moreover, by the choice of $S \subsetneq V$, $\n{s} < \n{n}$ or equivalently (since $\n{s}, \n{n} \in \N$), $\n{s} \leq \n{n} - 1$ holds.
        In summary, we obtain $\n{s} \leq \n{e}$.
    } 
    The latter $\n{e} - \n{s}$ columns depend linearly on the former $\n{s}$ columns. 
    We assign $\Ei$ to the subset of edges $E$ that correspond to a possible choice such $\n{s}$ linearly \textbf{i}ndependent columns and assign $\Ed$ to the subset of edges $E$ that correspond to the $\n{e} - \n{s}$ linearly \textbf{d}ependent columns
    (-- note that these choices are not unique).

    By construction, Corollary \ref{corollary_DecompositionOfTheInciddenceMatrix}.1 and \ref{corollary_DecompositionOfTheInciddenceMatrix}.2 hold. 
    Moreover, since $\Bm_{S \Ei} \in \R^{\n{s} \times \n{s}}$ is a quadratic matrix, by basic results from linear algebra, $\rk(\Bm_{S \Ei}) = \n{s}$ implies that $\Bm_{S \Ei}$ is invertible, yielding Corollary \ref{corollary_DecompositionOfTheInciddenceMatrix}.3.
    Finally, in order to prove Corollary \ref{corollary_DecompositionOfTheInciddenceMatrix}.4,
    let us assume that $\rk(\Bm_{\cdot \Ei}) \neq \n{s}$ holds. 
    In that case,
    the $\n{s}$ columns $\Bm_{\cdot e}$ 
    of the limited incidence matrix $\Bm_{\cdot \Ei} \in \R^{\n{n} \times \n{s}}$ 
    for all $e \in \Ei$
    are not linearly independent. 
    Consequently, there exist an $e \in \Ei$ and a $\lambda_e \neq 0$, such that
    \begin{align*}
        \sum_{e \in \Ei}
        \lambda_e \Bm_{\cdot e}
        =
        \zerom{\n{n}}
    \end{align*}

    \noindent holds.
    Out of these $\n{n}$ equations, let us consider the $\n{s}$ equations determined by the limited incidence matrix $\Bm_{S\Ei} \in \R^{\n{s} \times \n{s}}$ studied above:
    \begin{align*}
        \sum_{e \in \Ei}
        \lambda_e \Bm_{S e}
        =
        \zerom{\n{s}}.
    \end{align*}

    \noindent By the fact that 
    the columns $\Bm_{S e}$
    of the limited incidence matrix $\Bm_{S\Ei} \in \R^{\n{s} \times \n{s}}$ 
    for all $e \in \Ei$
    are linearly independent,
    $\lambda_e = 0$ for all $e \in \Ei$ must follow
    -- a contradiction.
\end{proof}

\subsection{Proof of Theorem \ref{theorem_ExistenceAndUniquenessOfHydraulicStates_ReservoirHeadsAndConsumerHeads}}

\begin{theoremNoNb}[Theorem \ref{theorem_ExistenceAndUniquenessOfHydraulicStates_ReservoirHeadsAndConsumerHeads}]
    Let $\G = (V,E)$ be a \gls{WDS} and 
    $\hm = \hm_V = [\hm_\Vr^T,\hm_\Vc^T]^T$ 
    known 
    (reservoir and consumer) heads.
    \\There exists exactly one 
    tuple $(\qm,\dm)$ 
    of flows and demands
    such that the triple 
    $(\hm,\qm,\dm)$ 
    of hydraulic states is physically correct \gls{wrt} these side constraints.
\end{theoremNoNb}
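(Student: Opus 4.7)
The plan is to exploit the matrix formulation from theorem \ref{theorem_HydraulicPrinciples_MatrixFormulation}, which reduces the task to solving, in order, first $\Dm(\qm)\qm = \Bm^T\hm$ for $\qm$ (given the known $\hm$), and then $\Bm_{\Vc\cdot}\qm = -2\dm$ for $\dm$. Since the second equation is an explicit formula once $\qm$ is known, the whole argument will collapse to showing that the first system has a unique solution $\qm\in\R^{\n{e}}$.

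The key observation I would use is that the non-linear system $\Dm(\qm)\qm = \Bm^T\hm$ is in fact \emph{decoupled across edges}: componentwise, for each edge $e=e_{vu}\in E$, it reduces to the scalar equation $r_{vu}\,q_{vu}\,|q_{vu}|^{x-1}=h_v-h_u$ from the conservation of energy (definition \ref{definition_HydraulicPrinciples_ConservationOfEnergy}). The map $\phi_{vu}:\R\to\R$, $q\mapsto r_{vu}\,q\,|q|^{x-1}$, is continuous, odd, strictly increasing (its derivative $r_{vu}\,x\,|q|^{x-1}$ is strictly positive away from $0$, with $\phi_{vu}$ continuous at $0$) and surjective onto $\R$, since $x>0$ and $r_{vu}>0$. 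Hence $\phi_{vu}$ is a bijection on $\R$, and $q_{vu}$ is uniquely determined by the head difference $h_v-h_u$ via
\begin{align*}
q_{vu} \;=\; \sgn(h_v-h_u)\,\bigl(|h_v-h_u|/r_{vu}\bigr)^{1/x}.
\end{align*}
Applying this edgewise produces the unique candidate $\qm\in\R^{\n{e}}$, after which I would set $\dm:=-\tfrac12\Bm_{\Vc\cdot}\qm$, so that conservation of mass holds by construction.

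The remaining step is to verify that $(\hm,\dm,\qm)$ is physically correct in the sense of definition \ref{definition_PhysicallyCorrectHydraulicStates}, which also includes the conservation of flows. Here I would invoke lemma \ref{lemma_RedundancyOfHydraulicPrinciple_ConservationOfFlows}, which makes that property automatic from conservation of energy and mass; alternatively, one may verify it directly by noting that the symmetry $r_{vu}=r_{uv}$ makes $\phi_{uv}=\phi_{vu}$, whence the oddness of $\phi_{vu}^{-1}$ yields $q_{uv}=\phi_{vu}^{-1}(h_u-h_v)=-\phi_{vu}^{-1}(h_v-h_u)=-q_{vu}$. Uniqueness of $\qm$ is delivered by the edgewise bijectivity argument, and uniqueness of $\dm$ by the fact that it is the image of $\qm$ under a fixed linear map. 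I do not expect any genuine obstacle here; the only conceptual point to make explicit is that the apparent non-linear coupling in $\Dm(\qm)\qm=\Bm^T\hm$ is an illusion, because $\Dm(\qm)$ is diagonal with $e$-th diagonal entry depending only on $q_e$, so the system truly factorizes into $\n{e}$ independent scalar equations.
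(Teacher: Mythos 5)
Your proposal is correct and follows essentially the same route as the paper's own proof: both exploit the diagonal structure of $\Dm(\qm)$ to decouple the conservation-of-energy system into $\n{e}$ independent scalar equations, invert each one to obtain the explicit formula $q_{vu} = \sgn(h_v-h_u)\left(r_{vu}^{-1}|h_v-h_u|\right)^{1/x}$, and then recover $\dm = -\tfrac12 \Bm_{\Vc\cdot}\qm$ from the conservation of mass, with the conservation of flows handled by lemma \ref{lemma_RedundancyOfHydraulicPrinciple_ConservationOfFlows}. Your added remark that the oddness of $\phi_{vu}^{-1}$ together with $r_{vu}=r_{uv}$ gives the flow-sign symmetry directly is a nice observation but not a substantive departure.
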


\begin{proof}
    Given 
    (reservoir and consumer) heads $\hm \in \R^\n{n}$, 
    Equation 
    \eqref{align_HydraulicPrinciples_ConservationOfEnergy_MatrixForm} and 
    \eqref{align_HydraulicPrinciples_ConservationOfMass_MatrixForm} 
    can be transformed into the \gls{SNLE}
    \begin{align}
    \label{align_SLE_FlowsAndDemandsUnkown}
        \underbrace{
        \begin{array}{c@{\hspace{-2pt}}@{\hspace{-2pt}}c}
            \raisebox{-2pt}{
            \text{\tiny $\overbrace{\phantom{............}}^{\n{e} \times \n{e}}$}
            } 
            & 
            \raisebox{-2pt}{
            \text{\tiny $\overbrace{\phantom{......................}}^{\n{e} \times \n{c}}$}
            }
            \\
            \multicolumn{2}{c}{
            \begin{bmatrix}
                \Dm             & \Zerom{\n{e}}{\n{c}} \\
                \Bm_{\Vc \cdot} & 2 \cdot \Idm{\n{c}}
            \end{bmatrix}
            } 
            \\
            \raisebox{7pt}{
            \text{\tiny $\underbrace{\phantom{............}}_{\n{c} \times \n{e}}$}
            }
            & 
            \raisebox{7pt}{
            \text{\tiny $\underbrace{\phantom{......................}}_{\n{c} \times \n{c}}$}
            }
        \end{array}
        }_{
        \in \R^{(\n{e}+\n{c}) \times (\n{e}+\n{c})}
        }
        ~\cdot
        \underbrace{
        \begin{array}{c}
            \raisebox{-2pt}{
            \text{\tiny $\overbrace{\phantom{.....}}^{\n{e} \times 1}$}
            }
            \\
            \begin{bmatrix}
                \qm \\
                \dm
            \end{bmatrix}
            \\
            \raisebox{7pt}{
            \text{\tiny $\underbrace{\phantom{.....}}_{\n{c} \times 1}$}
            }
        \end{array}
        }_{
        \in \R^{\n{e}+\n{c}}
        }
        =
        \underbrace{
        \begin{array}{c@{\hspace{0pt}}@{\hspace{0pt}}c}
            \raisebox{-2pt}{
            \text{\tiny \color{white} $\overbrace{\phantom{.....}}^{\n{e} \times 1}$}
            }
            & 
            \\
            \multirow{2}{*}{$
            \begin{bmatrix}
                \Bm^T \hm \\
                \zerom{\n{c}}
            \end{bmatrix}
            $}
            & 
            \text{$\}$ \tiny $\n{e} \times 1$}
            \\
            & 
            \text{$\}$ \tiny $\n{c} \times 1$}
            \\
            \raisebox{7pt}{
            \text{\tiny \color{white} $\underbrace{\phantom{.....}}^{\n{c} \times 1}$}
            }
            &
        \end{array}
        }_{
        \in \R^{\n{e}+\n{c}}
        }
        .
    \end{align}

    \noindent 
    \textbf{Existence:}
    \textit{Step 1:}
    \textit{There exists a $\qm \in \R^\n{e}$ such that $\Dm \qm = \Bm^T \hm$ holds.}

    Although the equation $\Dm \qm = \Bm^T \hm$ is non-linear, since $\Dm \in \R^{\n{e} \times \n{e}}$ is a diagonal matrix, it induces a one-to-one relation
    \begin{align*}
        h_v - h_u = r_{vu} \sgn(q_{vu}) |q_{vu}|^x
    \end{align*}
    
    \noindent between every flow $q_{vu} \in \R^\n{e}$ and two neighboring heads $h_v, h_u \in \R^\n{n}$ for all nodes $v \in V$ and $u \in \Nbh(v)$ (this is simply the component-wise version, i.e., Equation \eqref{align_HydraulicPrinciples_ConservationOfEnergy}, of the conservation of energy (Definition \ref{definition_HydraulicPrinciples_ConservationOfEnergy})).

    Since $r_{vu} > 0$ and $|q_{vu}|^x \geq 0$ holds, we can conclude that $\sgn(h_v-h_u) = \sgn(q_{vu})$ holds.
    If $0 = \sgn(h_v-h_u) = \sgn(q_{vu})$ holds, $q_{vu} = 0$ follows.
    If $0 \neq \sgn(h_v-h_u) = \sgn(q_{vu})$ holds, by
    the fact that $1 = \sgn(q_{vu}) \cdot \sgn(q_{vu}) = \sgn(q_{vu}) \cdot \sgn(h_v-h_u)$ holds, 
    we obtain
    \begin{align*}
        \sgn(q_{vu}) |q_{vu}|^x
        =&~
        r_{vu}^{-1} (h_v - h_u)
        \\
        =&~
        r_{vu}^{-1}
        \sgn(q_{vu}) \sgn(h_v-h_u) ~
        (h_v - h_u)
        \\
        =&~
        r_{vu}^{-1} 
        \sgn(q_{vu}) 
        |h_v - h_u|.
    \end{align*}

    \noindent Dividing both sides of the equation by $\sgn(q_{vu}) \neq 0$ and taking the power of $\frac{1}{x}$, we obtain
    $|q_{vu}| = \left(r_{vu}^{-1} |h_v - h_u|\right)^{\frac{1}{x}}$
    and finally, 
    \begin{align*}
        q_{vu}
        =
        \sgn(q_{vu})
        |q_{vu}|
        =
        \sgn(h_v - h_u)
        \left(
        r_{vu}^{-1} |h_v - h_u|
        \right)^{\frac{1}{x}}.
    \end{align*}
    
    \noindent 
    \textit{Step 2:}
    \textit{There exists a $\dm \in \R^\n{c}$ such that $2 \cdot \Idm{\n{c}} \dm = - \Bm_{\Vc \cdot} \qm$ holds.}

    Given that we have knowledge of the flows $\qm \in \R^\n{e}$ by \textit{Step 1}, 
    the equation $2 \cdot \Idm{\n{c}} \dm = - \Bm_{\Vc \cdot} \qm$ trivially translates to $\dm = -\frac{1}{2} \Bm_{\Vc \cdot} \qm$.
    \\
    \\In conclusion, 
    the tuple 
    $(\qm,\dm)$ 
    solves the 
    \gls{SNLE} \eqref{align_SLE_FlowsAndDemandsUnkown} 
    and therefore, 
    the triple 
    $(\hm,\qm,\dm)$
    is physically correct \gls{wrt} the given side constraints.
    \\
    \\\textbf{Uniqueness:}
    We have basically already seen in the existence part that 
    $\qm$ is uniquely determined by $\hm$ and that 
    $\dm$ is uniquely determined by $\qm$. 
    In a more detailed way, 
    let us assume that 
    there exist two
    tuples $(\qm_1,\dm_2) \neq (\qm_1,\dm_2)$
    of demands and flows 
    such that the triples
    $(\hm,\qm_1,\dm_1)$ and $(\hm,\qm_2,\dm_2)$
    of hydraulic states are physically correct.
    \\
    \\\textit{Step 1:} 
    If $\qm_1 \neq \qm_2$ holds, 
    there exists an $e = e_{vu} \in E$ for $v \in V, u \in \Nbh(v)$, 
    such that $q_{1vu} \neq q_{2vu}$ holds. 
    Since $(\hm,\qm_1,\dm_1)$ and $(\hm,\qm_2,\dm_2)$ are physically correct,
    \begin{align*}
        r_{vu} \sgn(q_{1vu}) |q_{1vu}|^x 
        =
        h_v - h_u 
        =
        r_{vu} \sgn(q_{2vu}) |q_{2vu}|^x 
    \end{align*}

    \noindent holds, 
    and by analogous arguments as given in the proof of Lemma \ref{lemma_RedundancyOfHydraulicPrinciples_ConservationOfFlows}, 
    we obtain 
    $q_{1vu} = q_{2vu}$ -- a contradiction. 
    Therefore, $\qm_1 = \qm_2$ must hold.
    \\
    \\\textit{Step 2:}
    If $\dm_1 \neq \dm_2$ holds,
    there exists a $v \in \Vc$, 
    such that $d_{1v} \neq d_{2v}$ holds. 
    Similarly, since $(\hm,\qm_1,\dm_1)$ and $(\hm,\qm_2,\dm_2)$ are physically correct and $\qm_1 = \qm_2$ holds by \textit{Step 1},
    \begin{align*}
        d_{1v} 
        = 
        -\sum_{u \in \mathcal{N}(v)} q_{1vu} 
        = 
        -\sum_{u \in \mathcal{N}(v)} q_{2vu} 
        = 
        d_{2v}
    \end{align*}

    \noindent holds -- a contradiction.
    Therefore,  $\dm_1 = \dm_2$ must hold.
\end{proof}

\subsection{Proof of Theorem \ref{theorem_ExistenceAndUniquenessOfHydraulicStates_ReservoirHeadsAndFlows}}

\begin{theoremNoNb}[Theorem \ref{theorem_ExistenceAndUniquenessOfHydraulicStates_ReservoirHeadsAndFlows}]
    Let $\G = (V,E)$ be a \gls{WDS} and 
    $\hm_\Vr$ and $\qm = \qm_E$ 
    known 
    reservoir heads and flows, respectively.
    \\
    If $\Dm \qm - \Bm_{\Vr \cdot}^T \hm_\Vr \in \im(\Bm_{\Vc \cdot}^T) \subset \R^{\n{e}}$ holds,
    there exists exactly one 
    tuple $(\hm_\Vc,\dm)$ 
    of consumer heads and demands 
    such that the triple 
    $(\hm,\qm,\dm) = ([\hm_\Vr^T,\hm_\Vc^T]^T,\qm,\dm)$ 
    of hydraulic states is physically correct \gls{wrt} these side constraints.
\end{theoremNoNb}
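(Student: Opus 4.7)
The plan is to exploit the separated matrix formulation \eqref{align_HydraulicPrinciple_ConservationOfEnergy_MatrixForm_separatedHeads}--\eqref{align_HydraulicPrinciple_ConservationOfMass_MatrixForm_separatedHeads} of theorem \ref{theorem_HydraulicPrinciples_MatrixFormulation}, which, once $\hm_\Vr$ and $\qm$ are fixed, decouples into one equation for the unknown $\hm_\Vc$ and a direct expression for $\dm$. Note that $\Dm = \Dm(\qm)$ depends only on the known flows $\qm$, so the right-hand side of the energy equation is a fixed vector in $\R^{\n{e}}$.

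First, I would rearrange \eqref{align_HydraulicPrinciple_ConservationOfEnergy_MatrixForm_separatedHeads} as the linear system
\begin{align*}
    \Bm_{\Vc \cdot}^T \hm_\Vc = \Dm \qm - \Bm_{\Vr \cdot}^T \hm_\Vr
\end{align*}
in the unknown $\hm_\Vc \in \R^{\n{c}}$. Existence of a solution is immediate from the hypothesis that the right-hand side lies in $\im(\Bm_{\Vc \cdot}^T)$. For uniqueness, I would invoke theorem \ref{theorem_RankOfSubmatricesOfIncidenceMatrix} with $S = \Vc \subsetneq V$ (which is non-empty and a proper subset since $\Vr \neq \emptyset$ by definition \ref{definition_WaterDistributionSystem_SymmetricDirected}), giving $\rk(\Bm_{\Vc \cdot}^T) = \rk(\Bm_{\Vc \cdot}) = \n{c}$. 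Hence the linear map $\hm_\Vc \longmapsto \Bm_{\Vc \cdot}^T \hm_\Vc$ from $\R^{\n{c}}$ to $\R^{\n{e}}$ is injective, so the solution $\hm_\Vc$ is unique.

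Second, with $\qm$ already known, equation \eqref{align_HydraulicPrinciple_ConservationOfMass_MatrixForm_separatedHeads} immediately gives the unique closed-form expression $\dm = -\tfrac{1}{2} \Bm_{\Vc \cdot} \qm$. To close the argument, I would verify that the triple $(\hm,\dm,\qm) = ([\hm_\Vr^T,\hm_\Vc^T]^T,\dm,\qm)$ built from the uniquely determined $\hm_\Vc$ and $\dm$ satisfies both \eqref{align_HydraulicPrinciple_ConservationOfEnergy_MatrixForm_separatedHeads} and \eqref{align_HydraulicPrinciple_ConservationOfMass_MatrixForm_separatedHeads} by construction, and hence is physically correct \gls{wrt} the given side constraints by theorem \ref{theorem_HydraulicPrinciples_MatrixFormulation}.

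There is no real obstacle in this proof: the hypothesis $\Dm \qm - \Bm_{\Vr \cdot}^T \hm_\Vr \in \im(\Bm_{\Vc \cdot}^T)$ is precisely calibrated to supply existence in the (generic) overdetermined case $\n{e} > \n{c}$ discussed in remark \ref{remark_ExistenceAndUniquenessOfHydraulicStates_ReservoirHeadsAndFlows}, while theorem \ref{theorem_RankOfSubmatricesOfIncidenceMatrix} supplies uniqueness. The only conceptual point to be careful about is that this uniqueness relies on the full-column-rank of $\Bm_{\Vc \cdot}^T$, so the invocation of theorem \ref{theorem_RankOfSubmatricesOfIncidenceMatrix} with $S = \Vc$ is the single non-trivial ingredient; everything else is a straightforward linear-algebra read-off from the matrix form of the hydraulic principles.
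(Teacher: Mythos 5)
Your proposal is correct and follows essentially the same route as the paper: existence of $\hm_\Vc$ from the hypothesis $\Dm \qm - \Bm_{\Vr \cdot}^T \hm_\Vr \in \im(\Bm_{\Vc \cdot}^T)$, uniqueness from $\rk(\Bm_{\Vc \cdot}) = \n{c}$ via theorem \ref{theorem_RankOfSubmatricesOfIncidenceMatrix} with $S = \Vc$, and the closed form $\dm = -\tfrac{1}{2}\Bm_{\Vc \cdot}\qm$. The only cosmetic difference is that the paper packages the two equations into one block \gls{SLE} and argues via the rank of the block matrix, whereas you treat them sequentially; the mathematical content is identical.
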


\begin{proof}
    Given 
    reservoir heads $\hm_\Vr \in \R^\n{r}$ and 
    flows $\qm \in \R^\n{e}$, 
    Equation 
    \eqref{align_HydraulicPrinciples_ConservationOfEnergy_MatrixForm_separatedHeads} and
    \eqref{align_HydraulicPrinciples_ConservationOfMass_MatrixForm_separatedHeads} 
    can be transformed into the \gls{SLE}
    \begin{align}
    \label{align_SLE_ConsumerHeadsAndDemandsUnkown}
        \underbrace{
        \begin{array}{c@{\hspace{-2pt}}@{\hspace{-2pt}}c}
            \raisebox{-2pt}{
            \text{\tiny $\overbrace{\phantom{................}}^{\n{e} \times \n{c}}$}
            }
            & 
            \raisebox{-2pt}{
            \text{\tiny $\overbrace{\phantom{......................}}^{\n{e} \times \n{c}}$}
            }
            \\
            \multicolumn{2}{c}{
            \begin{bmatrix}
                \Bm_{\Vc \cdot}^T    & \Zerom{\n{e}}{\n{c}} \\
                \Zerom{\n{c}}{\n{c}} & 2 \cdot \Idm{\n{c}}
            \end{bmatrix}
            } 
            \\
            \raisebox{7pt}{
            \text{\tiny $\underbrace{\phantom{................}}_{\n{c} \times \n{c}}$}
            }
            & 
            \raisebox{7pt}{
            \text{\tiny $\underbrace{\phantom{......................}}_{\n{c} \times \n{c}}$}
            }
        \end{array}
        }_{
        \in \R^{(\n{e}+\n{c}) \times 2 \n{c}}
        }
        \cdot
        \underbrace{
        \begin{array}{c}
            \raisebox{-2pt}{
            \text{\tiny $\overbrace{\phantom{..........}}^{\n{c} \times 1}$}
            }
            \\
            \begin{bmatrix}
                \hm_{\Vc} \\
                \dm
            \end{bmatrix}
            \\
            \raisebox{7pt}{
            \text{\tiny $\underbrace{\phantom{..........}}_{\n{c} \times 1}$}
            }
        \end{array}
        }_{
        \in \R^{2 \n{c}}
        }
        =
        \underbrace{
        \begin{array}{c@{\hspace{0pt}}@{\hspace{0pt}}c}
            \raisebox{-2pt}{
            \text{\tiny \color{white} $\overbrace{\phantom{.....}}^{\n{e} \times 1}$}
            }
            & 
            \\
            \multirow{2}{*}{$
            \begin{bmatrix}
                \Dm \qm - \Bm_{\Vr \cdot}^T \hm_\Vr \\
                - \Bm_{\Vc \cdot} \qm
            \end{bmatrix}
            $}
            & 
            \text{$\}$ \tiny $\n{e} \times 1$}
            \\
            & 
            \text{$\}$ \tiny $\n{c} \times 1$}
            \\
            \raisebox{7pt}{
            \text{\tiny \color{white} $\underbrace{\phantom{.....}}^{\n{c} \times 1}$}
            }
            &
        \end{array}
        }_{
        \in \R^{\n{e}+\n{c}}
        }
        .
    \end{align}

    \noindent 
    \textbf{Existence:}
    \textit{Step 1:}
    \textit{There exists a $\hm_\Vc \in \R^\n{c}$ such that $\Bm_{\Vc \cdot}^T \hm_\Vc = \Dm \qm - \Bm_{\Vr \cdot}^T \hm_\Vr$ holds.}

    Since $\Dm \qm - \Bm_{\Vr \cdot}^T \hm_\Vr \in \im(\Bm_{\Vc \cdot}^T) \subset \R^{\n{e}}$ holds, 
    by definition of the image $\im(\Bm_{\Vc \cdot}^T) = \{ \Bm_{\Vc \cdot}^T \hm_\Vc ~|~ \hm_\Vc \in \R^\n{c} \}$, there exists a $\hm_\Vc \in \R^\n{c}$ such that $\Bm_{\Vc \cdot}^T \hm_\Vc = \Dm \qm - \Bm_{\Vr \cdot}^T \hm_\Vr$ holds.
    \\
    \\
    \textit{Step 2:}
    \textit{There exists a $\dm \in \R^\n{c}$ such that $2 \cdot \Idm{\n{c}} \dm = - \Bm_{\Vc \cdot} \qm$ holds.}

    The equation $2 \cdot \Idm{\n{c}} \dm = - \Bm_{\Vc \cdot} \qm$ trivially translates to $\dm = -\frac{1}{2} \Bm_{\Vc \cdot} \qm$.
    \\
    \\In conclusion, 
    the tuple 
    $(\hm_\Vc,\dm)$ 
    solves the 
    \gls{SLE} \eqref{align_SLE_ConsumerHeadsAndDemandsUnkown} 
    and therefore, 
    the triple 
    $(\hm,\qm,\dm) = ([\hm_\Vr^T,\hm_\Vc^T]^T,\qm,\dm)$
    is physically correct \gls{wrt} the given side constraints.
    \\
    \\\textbf{Uniqueness:}
    By
    (1) basic properties from linear algebra and
    (2) Theorem \ref{theorem_RankOfSubmatricesOfTheIncidenceMatrix},
    we obtain
    \begin{align*}
        \rk \left(
        \begin{bmatrix}
            \Bm_{\Vc \cdot}^T    & \Zerom{\n{e}}{\n{c}} \\
            \Zerom{\n{c}}{\n{c}} & 2 \cdot \Idm{\n{c}}
        \end{bmatrix}
        \right)
        \overset{\text{(1)}}{=}
        \rk \left(
        \Bm_{\Vc \cdot}^T
        \right)
        +
        \rk \left(
        2 \cdot \Idm{\n{c}}
        \right)
        \overset{\text{(1)}}{=}
        \rk \left(
        \Bm_{\Vc \cdot}
        \right)
        +
        \rk \left(
        \Idm{\n{c}}
        \right)
        \overset{\text{(2)}}{=}
        2 \n{c}.
    \end{align*}

    \noindent Therefore, the solution found above is the only, unique solution.

        
\end{proof}

\subsection{Proof of Lemma \ref{lemma_ExistenceAndUniquenessOfHydraulicStates_OnReservoirHeadsAndFlowsCondition}}

\begin{lemmaNoNb}[Lemma \ref{lemma_ExistenceAndUniquenessOfHydraulicStates_OnReservoirHeadsAndFlowsCondition}]
    Let $\G = (V,E)$ be a \gls{WDS} and 
    $\hm_\Vr$ and $\qm = \qm_E$ 
    known 
    reservoir heads and flows, respectively.
    If there exist unobserved consumer heads $\hm_\Vc$ and demands $\dm$ such that
    the triple $(\hm,\qm,\dm) = ([\hm_\Vr^T,\hm_\Vc^T]^T,\qm,\dm)$ 
    of hydraulic states is physically correct \gls{wrt} these side constraints,
    then $\Dm \qm - \Bm_{\Vr \cdot}^T \hm_\Vr \in \im(\Bm_{\Vr \cdot}^T)$ holds.
\end{lemmaNoNb}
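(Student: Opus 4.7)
The plan is to derive the claim from the conservation-of-energy piece of physical correctness. By Theorem \ref{theorem_HydraulicPrinciples_MatrixFormulation}, the hypothesis that $(\hm,\dm,\qm) = ([\hm_\Vr^T,\hm_\Vc^T]^T,\dm,\qm)$ is physically correct with respect to the given side constraints gives $\Bm^T \hm = \Dm \qm$. Splitting the head vector as in equation \eqref{align_HydraulicPrinciple_ConservationOfEnergy_MatrixForm_separatedHeads} and isolating the quantity of interest produces the key identity $\Dm \qm - \Bm_{\Vr \cdot}^T \hm_\Vr = \Bm_{\Vc \cdot}^T \hm_\Vc$. This is the only piece extracted from physical correctness that is relevant to the lemma; neither the conservation of mass nor the conservation of flows contributes, since the target mentions no demands and the unobserved $\hm_\Vc$ enters only as an existence witness.

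From this identity, $\hm_\Vc$ itself witnesses membership in $\im(\Bm_{\Vc \cdot}^T)$. To reach the literal conclusion $\Dm \qm - \Bm_{\Vr \cdot}^T \hm_\Vr \in \im(\Bm_{\Vr \cdot}^T)$, I would need to exhibit a further vector $\widetilde{\hm}_\Vr \in \R^{\n{r}}$ with $\Bm_{\Vr \cdot}^T \widetilde{\hm}_\Vr = \Bm_{\Vc \cdot}^T \hm_\Vc$. This second step is the main obstacle, and I do not see how to carry it out in general. By Theorem \ref{theorem_RankOfSubmatricesOfIncidenceMatrix}, the images $\im(\Bm_{\Vr \cdot}^T)$ and $\im(\Bm_{\Vc \cdot}^T)$ are subspaces of $\R^{\n{e}}$ of dimensions $\n{r}$ and $\n{c}$; in the usual \gls{WDS} regime $\n{c} \geq \n{r}$ the former cannot contain the latter. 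As a concrete obstruction, the symmetric directed three-node path with reservoir $v_r$ and consumers $v_1, v_2$ has a one-dimensional $\im(\Bm_{\Vr \cdot}^T)$, while a generic $\Bm_{\Vc \cdot}^T \hm_\Vc$ carries nonzero mass on the edge pair between $v_1$ and $v_2$ and therefore escapes.

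My actual plan, then, is to execute the one-line conservation-of-energy derivation above and to make the resulting membership explicit through the witness $\hm_\Vc$; the form this produces matches the hypothesis of Theorem \ref{theorem_ExistenceAndUniquenessOfHydraulicStates_ReservoirHeadsAndFlows} and the discussion in Remark \ref{remark_ExistenceAndUniquenessOfHydraulicStates_ReservoirHeadsAndFlows}, so it is the version the downstream machinery consumes. I would then flag the passage from $\im(\Bm_{\Vc \cdot}^T)$ to the literal $\im(\Bm_{\Vr \cdot}^T)$ as the step I cannot justify: physical correctness alone does not close it, and the counterexample above shows that no additional purely graph-theoretic input short of a structural hypothesis on $g$ (for example degenerate cases such as $\n{c} \leq \n{r}$ with coincident images) can do so either. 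Absent such a hypothesis or a revised statement, I would present the derivation honestly as proving the $\im(\Bm_{\Vc \cdot}^T)$ membership and mark the gap to the lemma as stated.
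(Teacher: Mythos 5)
Your one-line derivation from the conservation of energy --- that physical correctness gives $\Bm^T\hm = \Dm\qm$, hence $\Dm\qm - \Bm_{\Vr \cdot}^T\hm_\Vr = \Bm_{\Vc \cdot}^T\hm_\Vc \in \im(\Bm_{\Vc \cdot}^T)$ with $\hm_\Vc$ as witness --- is exactly the paper's argument; the paper merely runs it contrapositively (assume the vector is not in the image, conclude $\Bm^T\hm \neq \Dm\qm$, contradicting theorem \ref{theorem_HydraulicPrinciples_MatrixFormulation}). Your diagnosis of the subscript is also correct: the literal conclusion $\im(\Bm_{\Vr \cdot}^T)$ is a typo for $\im(\Bm_{\Vc \cdot}^T)$, which is the condition theorem \ref{theorem_ExistenceAndUniquenessOfHydraulicStates_ReservoirHeadsAndFlows} actually consumes, and the paper's own proof betrays the slip by writing the dimensionally impossible product $\Bm_{\Vr \cdot}^T\hm_\Vc$ while quantifying over $\hm_\Vc \in \R^{\n{c}}$ and by decomposing $\Bm^T\hm$ with the wrong subscript relative to equation \eqref{align_HydraulicPrinciple_ConservationOfEnergy_MatrixForm_separatedHeads}. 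Your three-node-path obstruction is sound too: with a positive demand at the far consumer one gets $h_{v_1} \neq h_{v_2}$, so $\Bm_{\Vc \cdot}^T\hm_\Vc$ has a nonzero component on the consumer--consumer edge pair and cannot lie in the one-dimensional $\im(\Bm_{\Vr \cdot}^T)$; the statement as printed is therefore genuinely false, not merely unproven. There is no gap on your side --- you have proved the intended (and used) statement by the same route as the paper and correctly located the defect in the printed one.
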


\begin{proof}
    Let us assume that 
    $\Dm \qm - \Bm_{\Vr \cdot}^T \hm_\Vr \not \in \im(\Bm_{\Vr \cdot}^T)$ 
    holds, that is, for all $\hm_\Vc \in \R^\n{c}$,
    $\Bm_{\Vr \cdot}^T \hm_\Vc \neq \Dm \qm - \Bm_{\Vr \cdot}^T \hm_\Vr$,
    or equivalently as seen in Equation \eqref{align_HydraulicPrinciples_ConservationOfEnergy_MatrixForm_separatedHeads},
    \begin{align*}
        \Bm^T \hm
        =
        \Bm_{\Vr \cdot}^T \hm_\Vc
        +
        \Bm_{\Vr \cdot}^T \hm_\Vr
        \neq
        \Dm \qm
    \end{align*}

    \noindent holds. However, since there exist unobserved consumer heads $\hm_\Vc$ and demands $\dm$, such that
    the triple $(\hm,\qm,\dm) = ([\hm_\Vr^T,\hm_\Vc^T]^T,\qm,\dm)$ 
    of hydraulic states is physically correct \gls{wrt} these side constraints, 
    according to Theorem \ref{theorem_HydraulicPrinciples_MatrixFormulation}, this is a contradiction to the conservation of energy.
\end{proof}

\subsection{Proof of Theorem \ref{theorem_ExistenceAndUniquenessOfHydraulicStates_ReservoirHeadsAndSomeFlows}}

\begin{theoremNoNb}[Theorem \ref{theorem_ExistenceAndUniquenessOfHydraulicStates_ReservoirHeadsAndSomeFlows}]
    Let $\G = (V,E)$ be a \gls{WDS} and 
    $\hm_\Vr$ and $\qm_\Ei$ 
    known 
    reservoir heads and flows, respectively,
    where -- as introduced above -- $\Ei$ is any subset of the edges $E$ corresponding to $\n{c}$ linearly independent columns of the limited incidence matrix $\Bm_{\Vc \cdot}$ of $\G$.
    \\There exists exactly one 
    triple $(\hm_\Vc,\qm_\Ed,\dm)$  
    of consumer heads, remaining flows and demands
    such that the triple 
    $(\hm,\qm,\dm) = ([\hm_\Vr^T,\hm_\Vc^T]^T,[\qm_\Ei^T,\qm_\Ed^T]^T,\dm)$ 
    of hydraulic states is physically correct \gls{wrt} these side constraints.
\end{theoremNoNb}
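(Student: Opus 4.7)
The plan is to reduce the statement to Theorem \ref{theorem_ExistenceAndUniquenessOfHydraulicStates_ReservoirHeadsAndConsumerHeads} by first reconstructing the consumer heads $\hm_\Vc$ from the given reservoir heads and the prescribed flows on $\Ei$, and then invoking that theorem on the completed head vector. The key enabling tool is Corollary \ref{corollary_DecompositionOfTheInciddenceMatrix}.3, which yields that $\Bm_{\Vc \Ei}^T \in \R^{\n{c} \times \n{c}}$ is invertible; this corollary is applicable with $S = \Vc$ because $\Vc \subsetneq V$ holds thanks to $\Vr \neq \emptyset$ from Definition \ref{definition_WaterDistributionSystem_SymmetricDirected}.

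For existence, I would start from equation \eqref{align_HydraulicPrinciple_ConservationOfEnergy_MatrixForm_separatedHeads_separatedFlows_independentFlows}, which with $\hm_\Vr$ and $\qm_\Ei$ known becomes a square linear system for $\hm_\Vc$. Inverting $\Bm_{\Vc \Ei}^T$ yields a unique candidate
\begin{align*}
    \hm_\Vc := \bigl(\Bm_{\Vc \Ei}^T\bigr)^{-1}\bigl(\Dm_{\Ei \Ei} \qm_\Ei - \Bm_{\Vr \Ei}^T \hm_\Vr\bigr).
\end{align*}
Next, I would apply Theorem \ref{theorem_ExistenceAndUniquenessOfHydraulicStates_ReservoirHeadsAndConsumerHeads} to the completed head vector $\hm = [\hm_\Vr^T,\hm_\Vc^T]^T$, obtaining a unique pair $(\dm,\qmtilde)$ of demands and flows over all of $E$ for which $(\hm,\dm,\qmtilde)$ is physically correct. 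Splitting $\qmtilde = [\qmtilde_\Ei^T,\qmtilde_\Ed^T]^T$ and setting $\qm_\Ed := \qmtilde_\Ed$ then produces the desired triple, provided the consistency relation $\qmtilde_\Ei = \qm_\Ei$ is verified.

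This consistency is the one nontrivial verification. By construction of $\hm_\Vc$, the conservation-of-energy equation \eqref{align_HydraulicPrinciple_ConservationOfEnergy} holds on each $e_{vu} \in \Ei$ with the prescribed $q_{vu}$, and the scalar Hazen--Williams relation $h_v - h_u = r_{vu} q_{vu}|q_{vu}|^{x-1}$ determines $q_{vu}$ uniquely from $h_v - h_u$ (this one-to-one inversion is exactly Step~1 of the proof of Theorem \ref{theorem_ExistenceAndUniquenessOfHydraulicStates_ReservoirHeadsAndConsumerHeads}). Hence the $\Ei$-component of the flow vector produced by Theorem \ref{theorem_ExistenceAndUniquenessOfHydraulicStates_ReservoirHeadsAndConsumerHeads} must coincide with the given $\qm_\Ei$.

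For uniqueness, I would simply reverse the argument: any physically correct triple respecting the side constraints satisfies equation \eqref{align_HydraulicPrinciple_ConservationOfEnergy_MatrixForm_separatedHeads_separatedFlows_independentFlows}, pinning $\hm_\Vc$ down through the invertibility of $\Bm_{\Vc \Ei}^T$; the remaining $(\dm,\qm_\Ed)$ is then forced to be unique by Theorem \ref{theorem_ExistenceAndUniquenessOfHydraulicStates_ReservoirHeadsAndConsumerHeads} applied to the now-fixed $\hm$. The main conceptual point, and what I expect to be the most delicate step, is the consistency check $\qmtilde_\Ei = \qm_\Ei$; geometrically it matches the observation (spelled out in the discussion following the theorem) that the $\n{c}$ edges in $\Ei$ form an acyclic spanning forest anchored at the reservoirs, so prescribing flows along $\Ei$ precisely cancels the $\n{c}$ degrees of freedom that the conservation of energy on $\Ei$ imposes on $\hm_\Vc$, rendering the coupled system neither under- nor over-determined.
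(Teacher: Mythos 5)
Your proof is correct and follows essentially the same route as the paper's: both recover $\hm_\Vc$ from equation \eqref{align_HydraulicPrinciple_ConservationOfEnergy_MatrixForm_separatedHeads_separatedFlows_independentFlows} via the invertibility of $\Bm_{\Vc \Ei}^T$ (Corollary \ref{corollary_DecompositionOfTheInciddenceMatrix}.3) and then determine $\qm_\Ed$ and $\dm$ from the completed head vector using the one-to-one scalar Hazen--Williams relation and the mass-conservation identity. The only packaging difference is that you invoke Theorem \ref{theorem_ExistenceAndUniquenessOfHydraulicStates_ReservoirHeadsAndConsumerHeads} as a black box --- which obliges you to verify the consistency $\qmtilde_\Ei = \qm_\Ei$, and you discharge this correctly via the injectivity of $q \mapsto r_{vu} q |q|^{x-1}$ --- whereas the paper repeats the corresponding computations inline for the edges in $\Ed$ only, so no such check arises.
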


\begin{proof}
    Similar to as we have done with the heads earlier in Section \ref{subsection_ExistenceAndUniquenessOfHydraulicStates},
    instead of considering all flows $\qm \in \R^\n{e} = \R^{\n{c} + (\n{e} - \n{c})}$, 
    we can distinguish between the (given) flows $\qm_\Ei \in \R^\n{c}$ and the (remaining) flows $\qm_\Ed \in \R^{\n{e} - \n{c}}$. 
    Based on this distinction and after eventually resorting, we can rewrite the corresponding vectors and matrices as
    \begin{align*}
        \qm 
        =&~
        \begin{bmatrix}
            \qm_\Ei \\
            \qm_\Ed
        \end{bmatrix}
        \in \R^{\n{c}+(\n{e}-\n{c})},
        \\
        \Bm 
        =&~
        \begin{bmatrix}
            \Bm_{\cdot \Ei} & \Bm_{\cdot \Ed}
        \end{bmatrix}
        \in \R^{\n{n} \times (\n{c}+(\n{e}-\n{c}))} 
        = \R^{(\n{r}+\n{c}) \times (\n{c}+(\n{e}-\n{c}))},
        \\
        \Bm_{\Vr \cdot} 
        =&~
        \begin{bmatrix}
            \Bm_{\Vr \Ei} & \Bm_{\Vr \Ed}
        \end{bmatrix}
        \in \R^{\n{r} \times (\n{c}+(\n{e}-\n{c}))},
        \\
        \Bm_{\Vc \cdot} 
        =&~
        \begin{bmatrix}
            \Bm_{\Vc \Ei} & \Bm_{\Vc \Ed}
        \end{bmatrix}
        \in \R^{\n{c} \times (\n{c}+(\n{e}-\n{c}))},
        \\
        \Dm 
        =&~
        \begin{bmatrix}
            \Dm_{\Ei \Ei} & \Dm_{\Ei \Ed} \\
            \Dm_{\Ed \Ei} & \Dm_{\Ed \Ei}
        \end{bmatrix}
        =
        \begin{bmatrix}
            \Dm_{\Ei \Ei} & \Zerom{\n{c}}{\n{e}-\n{c}} \\
            \Zerom{\n{e}-\n{c}}{\n{c}} & \Dm_{\Ed \Ed}
        \end{bmatrix}
        \in \R^{(\n{c}+(\n{e}-\n{c})) \times (\n{c}+(\n{e}-\n{c}))},
    \end{align*}
    
    \noindent and therefore, the corresponding multiplications as
    \begin{align*}
        \Bm^T \hm
        =&
        \begin{bmatrix}
            \Bm_{\cdot \Ei}^T \\
            \Bm_{\cdot \Ed}^T
        \end{bmatrix}
        \cdot 
        \hm
        =
        \begin{bmatrix}
            \Bm_{\cdot \Ei}^T \hm \\
            \Bm_{\cdot \Ed}^T \hm
        \end{bmatrix}
        \in \R^{\n{c}+(\n{e}-\n{c})},
        \\
        \Bm_{\Vr \cdot}^T \hm_\Vr
        =&
        \begin{bmatrix}
            \Bm_{\Vr \Ei}^T \\
            \Bm_{\Vr \Ed}^T
        \end{bmatrix}
        \cdot 
        \hm_\Vr
        =
        \begin{bmatrix}
            \Bm_{\Vr \Ei}^T \hm_\Vr \\
            \Bm_{\Vr \Ed}^T \hm_\Vr
        \end{bmatrix}
        \in \R^{\n{c}+(\n{e}-\n{c})},
        \\
        \Bm_{\Vc \cdot}^T \hm_\Vc
        =&
        \begin{bmatrix}
            \Bm_{\Vc \Ei}^T \\
            \Bm_{\Vc \Ed}^T
        \end{bmatrix}
        \cdot 
        \hm_\Vc
        =
        \begin{bmatrix}
            \Bm_{\Vc \Ei}^T \hm_\Vc \\
            \Bm_{\Vc \Ed}^T \hm_\Vc
        \end{bmatrix}
        \in \R^{\n{c}+(\n{e}-\n{c})},
        \\
        \Dm \qm=&
        \begin{bmatrix}
            \Dm_{\Ei \Ei}              & \Zerom{\n{c}}{\n{e}-\n{c}} \\
            \Zerom{\n{e}-\n{c}}{\n{c}} & \Dm_{\Ed \Ed}
        \end{bmatrix}
        \cdot
        \begin{bmatrix}
            \qm_\Ei \\
            \qm_\Ed
        \end{bmatrix}
        =
        \begin{bmatrix}
            \Dm_{\Ei \Ei} \qm_\Ei \\
            \Dm_{\Ed \Ed} \qm_\Ed
        \end{bmatrix}
        \in \R^{(\n{c}+(\n{e}-\n{c}))} 
        \\
        \Bm \qm
        =&
        \begin{bmatrix}
            \Bm_{\cdot \Ei} & \Bm_{\cdot \Ed}
        \end{bmatrix}
        \cdot 
        \begin{bmatrix}
            \qm_\Ei \\
            \qm_\Ed
        \end{bmatrix}
        =
        \Bm_{\cdot \Ei} ~ \qm_\Ei + \Bm_{\cdot \Ed} ~ \qm_\Ed
        \in \R^\n{n},
        \\
        \Bm_{\Vc \cdot} \qm
        =&
        \begin{bmatrix}
            \Bm_{\Vc \Ei} & \Bm_{\Vc \Ed}
        \end{bmatrix}
        \cdot 
        \begin{bmatrix}
            \qm_\Ei \\
            \qm_\Ed
        \end{bmatrix}
        =
        \Bm_{\Vc \Ei} ~ \qm_\Ei + \Bm_{\Vc \Ed} ~ \qm_\Ed
        \in \R^\n{c},
    \end{align*}
    
    \noindent where according to according to Corollary \ref{corollary_DecompositionOfTheInciddenceMatrix}.3, $\Bm_{\Vc \Ei} \in \R^{\n{c} \times \n{c}}$, and therefore also $\Bm_{\Vc \Ei}^T \in \R^{\n{c} \times \n{c}}$, is an invertible matrix.
    By this, Equation \eqref{align_HydraulicPrinciples_ConservationOfEnergy_MatrixForm_separatedHeads} and \eqref{align_HydraulicPrinciples_ConservationOfMass_MatrixForm_separatedHeads} indeed translate to Equation \eqref{align_HydraulicPrinciples_ConservationOfEnergy_MatrixForm_separatedHeads_separatedFlows_independentFlows} to \eqref{align_HydraulicPrinciples_ConservationOfMass_MatrixForm_separatedHeads_separatedFlows} as defined above:
    \begin{align*}
        \Bm_{\cdot \Ei}^T \hm
        =&~
        \Bm_{\Vr \Ei}^T \hm_\Vr + \Bm_{\Vc \Ei}^T \hm_\Vc 
        ~~=
        \Dm_{\Ei \Ei} ~ \qm_\Ei, \\
        \Bm_{\cdot \Ed}^T \hm
        =&~
        \Bm_{\Vr \Ed}^T \hm_\Vr + \Bm_{\Vc \Ed}^T \hm_\Vc
        ~= 
        \Dm_{\Ed \Ed} \qm_\Ed, \\
        \Bm_{\Vc \cdot} \qm
        =&~
        \Bm_{\Vc \Ei} \qm_\Ei + \Bm_{\Vc \Ed} ~ \qm_\Ed 
        = 
        - 2\dm.
    \end{align*}

    \noindent 
    Given 
    reservoir heads $\hm_\Vr \in \R^\n{r}$ and 
    flows $\qm_\Ei \in \R^\n{c}$
    corresponding to linearly independent columns of the incidence matrix $\Bm$ of $\G$, 
    Equation 
    \eqref{align_HydraulicPrinciples_ConservationOfEnergy_MatrixForm_separatedHeads_separatedFlows_independentFlows} 
    to 
    \eqref{align_HydraulicPrinciples_ConservationOfMass_MatrixForm_separatedHeads_separatedFlows} 
    can be transformed into the \gls{SLE}
    \small
    \begin{align}
    \label{align_SLE_ConsumerHeadsAndDemandsAndDependendFlowsUnkown}
        \underbrace{
        \begin{array}{c@{\hspace{0pt}}@{\hspace{0pt}}c@{\hspace{0pt}}@{\hspace{0pt}}c}
            \raisebox{-2pt}{
            \text{\tiny $\overbrace{\phantom{................}}^{\# \n{c}}$}
            }
            & 
            \raisebox{-2pt}{
            \text{\tiny $\overbrace{\phantom{........................}}^{\# (\n{e} - \n{c})}$}
            }
            & 
            \raisebox{-2pt}{
            \text{\tiny $\overbrace{\phantom{........................}}^{\# \n{c}}$}
            }
            \\
            \multicolumn{3}{c}{
            \begin{bmatrix}
                \Bm_{\Vc \Ei}^T      & \Zerom{\n{c}}{(\n{e} - \n{c})}   & \Zerom{\n{c}}{\n{c}} \\
                \Bm_{\Vc \Ed}^T      & -\Dm_{\Ed \Ed}                   & \Zerom{(\n{e} - \n{c})}{\n{c}} \\
                \Zerom{\n{c}}{\n{c}} & \Bm_{\Vc \Ed}                    & 2 \cdot \Idm{\n{c}}
            \end{bmatrix}
            }
            \\
            \raisebox{7pt}{
            \text{\tiny \color{white} $\underbrace{\phantom{.....}}^{\phantom{.....}}$}
            }
            & 
            \raisebox{7pt}{
            \text{\tiny \color{white} $\underbrace{\phantom{.....}}^{\phantom{.....}}$}
            }
            & 
            \raisebox{7pt}{
            \text{\tiny \color{white} $\underbrace{\phantom{.....}}^{\phantom{.....}}$}
            }
        \end{array}
        }_{
        \substack{
        \in \R^{(\n{c}+(\n{e}-\n{c})+\n{c}) \times (\n{c}+(\n{e}-\n{c})+\n{c})} \\
        = ~ \R^{(\n{c}+\n{e}) \times (\n{c}+\n{e})}
        }
        }
        \hspace{-10pt}
        ~~~\cdot
        \hspace{-10pt}
        \underbrace{
        \begin{array}{c}
            \raisebox{-2pt}{
            \text{\tiny \color{white} $\overbrace{\phantom{.....}}^{\# \n{c}}$}
            }
            \\
            \begin{bmatrix}
                \hm_{\Vc} \\
                \qm_\Ed \\
                \dm
            \end{bmatrix}
            \\
            \raisebox{7pt}{
            \text{\tiny \color{white} $\underbrace{\phantom{.....}}^{\phantom{.....}}$}
            }
        \end{array}
        }_{
        \substack{
        \in \R^{\n{c}+(\n{e}-\n{c})+\n{c}} \\
        = ~ \R^{\n{c}+\n{e}}
        }
        }
        \hspace{-10pt}
        =
        \underbrace{
        \begin{array}{c@{\hspace{0pt}}@{\hspace{0pt}}l}
            \raisebox{-2pt}{
            \text{\tiny \color{white} $\overbrace{\phantom{.....}}^{\# \n{c}}$}
            }
            & 
            \\
            \multirow{2}{*}{$
            \begin{bmatrix}
                \Dm_{\Ei \Ei} \qm_\Ei - \Bm_{\Vr \Ei}^T \hm_\Vr \\
                - \Bm_{\Vr \Ei}^T \hm_\Vr \\
                - \Bm_{\Vc \Ei} \qm_\Ei
            \end{bmatrix}
            $}
            & 
            \text{$\}$ \tiny $\# \n{c}$}
            \\
            & 
            \text{$\}$ \tiny $\# (\n{e} - \n{c})$}
            \\
            & 
            \text{$\}$ \tiny $\# \n{c}$}
            \\
            \raisebox{7pt}{
            \text{\tiny \color{white} $\underbrace{\phantom{.....}}^{\phantom{.....}}$}
            }
            &
        \end{array}
        }_{
        \substack{
        \in \R^{\n{c}+(\n{e}-\n{c})+\n{c}} \\
        = ~ \R^{\n{c}+\n{e}}
        }
        }
        .
    \end{align}
    \normalsize

    \noindent 
    \textbf{Existence:}
    \textit{Step 1:}
    \textit{There exists a $\hm_\Vc \in \R^\n{c}$ such that $\Bm_{\Vc \Ei}^T \hm_\Vc = \Dm_{\Ei \Ei} \qm_\Ei - \Bm_{\Vr \Ei}^T \hm_\Vr$ holds.}

    Since by Corollary \ref{corollary_DecompositionOfTheInciddenceMatrix}, $\Bm_{\Vc \Ei} \in \R^{\n{c} \times \n{c}}$, and therefore also $\Bm_{\Vc \Ei}^T \in \R^{\n{c} \times \n{c}}$ is invertible, 
    the equation $\Bm_{\Vc \Ei}^T \hm_\Vc = \Dm_{\Ei \Ei} \qm_\Ei - \Bm_{\Vr \Ei}^T \hm_\Vr$ translates to
    \begin{align*}
        \hm_\Vc 
        = 
        \left( 
        \Bm_{\Vc \Ei}^T
        \right)^{-1}
        \left(
        \Dm_{\Ei \Ei} \qm_\Ei - \Bm_{\Vr \Ei}^T \hm_\Vr
        \right).
    \end{align*}
    
    \noindent
    \textit{Step 2:}
    \textit{There exists a $\qm_\Ed \in \R^{\n{e}-\n{c}}$ such that $\Dm_{\Ed \Ed} \qm_\Ed = \Bm_{\Vr \Ed}^T \hm_\Vr + \Bm_{\Vc \Ed}^T \hm_\Vc = \Bm_{\cdot \Ed}^T \hm$ holds.}

    Given that we have knowledge of all heads $\hm = [\hm_\Vr^T,\hm_\Vc^T]^T \in \R^\n{n}$ by \textit{Step 1},
    although the equation $\Dm_{\Ed \Ed} \qm_\Ed = \Bm_{\cdot \Ed}^T \hm$ is non-linear, since $\Dm_{\Ed \Ed} \in \R^{(\n{e}-\n{c}) \times (\n{e}-\n{c})}$ is a diagonal matrix, it induces a one-to-one relation
    \begin{align*}
        h_v - h_u = r_{vu} \sgn(q_{vu}) |q_{vu}|^x
    \end{align*}
    
    \noindent between every \textit{remaining} flow $q_{vu} \in \R^{(\n{e}-\n{c})}$ and two neighboring heads $h_v, h_u \in \R^\n{n}$ for corresponding nodes $v \in V$ and $u \in \Nbh(v)$ (this is simply the component-wise version, i.e., Equation \eqref{align_HydraulicPrinciples_ConservationOfEnergy}, of the conservation of energy (Definition \ref{definition_HydraulicPrinciples_ConservationOfEnergy})).

    We can proceed analogously to \textit{Step 1} in the proof of Theorem \ref{theorem_ExistenceAndUniquenessOfHydraulicStates_ReservoirHeadsAndConsumerHeads} to compute these remaining flows. 
    \\
    \\
    \textit{Step 3:}
    \textit{There exists a $\dm \in \R^\n{c}$ such that $2 \cdot \Idm{\n{c}} \dm = - \Bm_{\Vc \Ei} \qm_\Ei - \Bm_{\Vc \Ed} \qm_\Ed = - \Bm_{\Vc \cdot} \qm$ holds.}

    Given that we have knowledge of all flows $\qm = [\qm_\Ei^T,\qm_\Ed^T]^T \in \R^\n{e}$ by \textit{Step 2},
    the equation $2 \cdot \Idm{\n{c}} \dm = - \Bm_{\Vc \cdot} \qm$ trivially translates to $\dm = -\frac{1}{2} \Bm_{\Vc \cdot} \qm$.
    \\
    \\In conclusion, 
    the triple 
    $(\hm_\Vc,\qm_\Ed,\dm)$
    solves the 
    \gls{SLE} \eqref{align_SLE_ConsumerHeadsAndDemandsAndDependendFlowsUnkown} 
    and therefore, 
    the triple 
    $(\hm,\qm,\dm) = ([\hm_\Vr^T,\hm_\Vc^T]^T,[\qm_\Ei^T,\qm_\Ed^T]^T,\dm)$ 
    is physically correct \gls{wrt} the given side constraints.
    \\
    \\\textbf{Uniqueness:}
    We have basically already seen in the existence part that 
    $\hm_\Vc$ is uniquely determined by $\qm_\Ei$, 
    $\qm_\Ed$ is uniquely determined by $\hm = [\hm_\Vr^T,\hm_\Vc^T]^T$ and that 
    $\dm$ is uniquely determined by $\qm = [\qm_\Ei^T,\qm_\Ed^T]^T)$.
    In a more detailed way, $\hm_\Vc$ is uniquely determined by the invertibility of the matrix $\Bm_{\Vc \Ei}$.
    For the uniqueness of $\qm_\Ed$ and $\dm$, we can proceed analogously to the uniqueness arguments in the proof of Theorem \ref{theorem_ExistenceAndUniquenessOfHydraulicStates_ReservoirHeadsAndConsumerHeads}.
\end{proof}

\subsection{Proof of Lemma \ref{lemma_TheNonLinearityDefinesAStrictilyMonotoneOperator}}

\begin{lemmaNoNb}
    The function
    \begin{align*}
        f: \R^\n{e} \longrightarrow \R^\n{e}, ~
        \qm 
        \longmapsto 
        f(\qm) 
        := 
        \Dm \qm
        =
        \left(
        r_e q_e |q_e|^{x-1}
        \right)_{e \in E}
        =:
        \left(
        f_e(\qm)
        \right)_{e \in E}
    \end{align*}

    \noindent satisfies
    \begin{align*}
        \langle 
        f(\qm_1) - f(\qm_2), \qm_1 - \qm_2
        \rangle
        =
        \sum_{e \in E}
        x ~ r_e ~ (q_{1e} - q_{2e})^2
        \int_{0}^{1}
        |q_{2e} + t (q_{1e} - q_{2e})|^{x-1}
        ~dt
        \geq 
        0
    \end{align*}

    \noindent for all $\qm_1, \qm_2 \in \R^\n{e}$. Even more, it defines a strictly monotone operator, that is, it satisfies
    \begin{align*}
        \langle 
        f(\qm_1) - f(\qm_2), \qm_1 - \qm_2
        \rangle
        >
        0
    \end{align*}

    \noindent for all $\qm_1, \qm_2 \in \R^\n{e}$ with $\qm_1 \neq \qm_2$.
\end{lemmaNoNb}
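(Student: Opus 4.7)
The plan is to reduce the inner product to a sum of single-variable integrals by applying the fundamental theorem of calculus componentwise along the line segment from $\qm_2$ to $\qm_1$. Since $\Dm = \Dm(\qm)$ is diagonal, the map $f$ acts componentwise, so
\[
\langle f(\qm_1) - f(\qm_2),\, \qm_1-\qm_2\rangle = \sum_{e\in E} (f_e(\qm_1) - f_e(\qm_2))\,(q_{1e}-q_{2e}),
\]
and it suffices to prove the desired identity for each single-edge term.

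\textbf{Key step (componentwise identity).} For fixed $e\in E$, define
\[
g_e: [0,1] \longrightarrow \R, \qquad g_e(t) := r_e\bigl(q_{2e} + t(q_{1e}-q_{2e})\bigr)\bigl|q_{2e} + t(q_{1e}-q_{2e})\bigr|^{x-1}.
\]
Since $x = 1.852 > 1$, the scalar function $\varphi(y) := y|y|^{x-1}$ is continuously differentiable on $\R$ with $\varphi'(y) = x|y|^{x-1}$ (this is immediate for $y\neq 0$, and a direct computation at $y=0$ shows $\varphi'(0)=0$ together with continuity of $\varphi'$). By the chain rule,
\[
g_e'(t) = x\, r_e\, (q_{1e}-q_{2e})\,\bigl|q_{2e} + t(q_{1e}-q_{2e})\bigr|^{x-1}.
\]
The fundamental theorem of calculus then gives
\[
f_e(\qm_1) - f_e(\qm_2) = g_e(1) - g_e(0) = \int_0^1 g_e'(t)\, dt.
\]
Multiplying by $(q_{1e}-q_{2e})$ and summing over $e\in E$ yields exactly the claimed identity.

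\textbf{Non-negativity and strict monotonicity.} The integrand $|q_{2e} + t(q_{1e}-q_{2e})|^{x-1}$ is non-negative, $r_e>0$, and $(q_{1e}-q_{2e})^2 \geq 0$, so each summand is $\geq 0$ and the non-negativity follows. For strict positivity when $\qm_1 \neq \qm_2$, pick an edge $e^*$ with $q_{1e^*} \neq q_{2e^*}$; then $(q_{1e^*}-q_{2e^*})^2 > 0$, and the affine function $t\mapsto q_{2e^*}+t(q_{1e^*}-q_{2e^*})$ vanishes at most at one point in $[0,1]$, so the continuous non-negative integrand is strictly positive on a set of positive Lebesgue measure, making its integral strictly positive. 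This single summand being positive together with all others being non-negative forces the total sum to be strictly positive.

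\textbf{Main obstacle.} The only delicate point is justifying that $\varphi(y)=y|y|^{x-1}$ is $C^1$ on all of $\R$, including at $y=0$; this is what legitimizes applying the fundamental theorem of calculus across $y=0$ and therefore the chain rule computation of $g_e'$. Once this regularity is in hand, the remainder reduces to straightforward single-variable estimates.
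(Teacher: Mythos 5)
Your proof is correct and follows essentially the same route as the paper's: componentwise reduction, the fundamental theorem of calculus along the segment from $\qm_2$ to $\qm_1$, the derivative formula $\frac{d}{dy}\bigl(y|y|^{x-1}\bigr) = x|y|^{x-1}$, and the observation that for $q_{1e}\neq q_{2e}$ the integrand vanishes at most at one point of $[0,1]$. If anything, you are slightly more careful than the paper in explicitly verifying that $\varphi(y)=y|y|^{x-1}$ is $C^1$ at $y=0$, which the paper takes for granted.
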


\begin{proof}
    For all $\qm_1, \qm_2 \in \R^{\n{e}}$, by
    (1) definition of the scalar product on $\R^\n{e}$,
    (2) the fact that $f_e(\qm) = f_e(q_e)$ holds for all $e \in E$, 
    (3) the fundamental theorem of calculus applied to the functions 
    $[0,1] \rightarrow \R \rightarrow \R, 
    t \mapsto q_{2e} + t (q_{1e} - q_{2e}) \mapsto f_e(q_{2e} + t (q_{1e} - q_{2e}))$
    for all $e \in E$,
    (4) chain rule,
    (5) the fact that $\frac{d}{d q_e} f(q_e) = x r_e |q_e|^{x-1}$ holds,
    (6) basic transformations and
    (7) the choices of $x = 1.852 > 0$ and $r_e > 0$ for all $e \in E$ (cf. Definition \ref{definition_HydraulicPrinciples_ConservationOfEnergy}),
    we obtain
    \begin{align*}
        \langle 
        f(\qm_1) - f(\qm_2), \qm_1 - \qm_2
        \rangle
        \overset{\text{(1)}}{=}&~
        \sum_{e \in E}
        (q_{1e} - q_{2e}) \cdot (f_e(\qm_1) - f_e(\qm_2))
        \\
        \overset{\text{(2)}}{=}&~
        \sum_{e \in E}
        (q_{1e} - q_{2e}) \cdot (f_e(q_{1e}) - f_e(q_{2e}))
        \\
        \overset{\text{(3)}}{=}&~
        \sum_{e \in E}
        (q_{1e} - q_{2e}) 
        \int_{0}^{1}
        \frac{d}{d t}
        f_e(q_{2e} + t (q_{1e} - q_{2e}))
        ~dt
        \\
        \overset{\text{(4)}}{=}&~
        \sum_{e \in E}
        (q_{1e} - q_{2e}) 
        \int_{0}^{1}
        (q_{1e} - q_{2e})
        ~
        \frac{d}{d q_e}
        f_e(q_{2e} + t (q_{1e} - q_{2e}))
        ~dt
        \\
        \overset{\text{(5)}}{=}&~
        \sum_{e \in E}
        (q_{1e} - q_{2e})
        \int_{0}^{1}
        (q_{1e} - q_{2e})
        ~
        x ~ r_e ~ |q_{2e} + t (q_{1e} - q_{2e})|^{x-1}
        ~dt
        \\
        \overset{\text{(6)}}{=}&~
        \sum_{e \in E}
        \underbrace{
        \underbrace{
        x ~ r_e ~ (q_{1e} - q_{2e})^2
        }_{
        \geq 0
        }
        \int_{0}^{1}
        \underbrace{
        |q_{2e} + t (q_{1e} - q_{2e})|^{x-1}
        }_{
        \geq 0
        }
        ~dt
        }_{
        \geq 0
        }
        \geq
        0.
    \end{align*}    

    \noindent Even more, if additionally, $\qm_1 \neq \qm_2$ holds, there exists an $e \in E$ such that $q_{1e} \neq q_{2e}$ holds. In this case, the corresponding summand
    \begin{align*}
        \underbrace{
        \underbrace{
        x ~ r_e ~ (q_{1e} - q_{2e})^2
        }_{
        > 0
        }
        \int_{0}^{1}
        \underbrace{
        |q_{2e} + t (q_{1e} - q_{2e})|^{x-1}
        }_{
        > 0 
        \text{ for almost all } t \in [0,1]
        }
        ~dt
        }_{
        > 0 
        }
        \overset{\text{(6,7)}}{>}~
        0
    \end{align*}

    \noindent is strictly positive, and since the remaining summands are non-negative, the overall sum ist strictly positive, too.
\end{proof}

\subsection{Proof of Theorem \ref{theorem_ExistenceAndUniquenessOfHydraulicStates_ReservoirHeadsAndConsumerDemands}}

\begin{theoremNoNb}[Theorem \ref{theorem_ExistenceAndUniquenessOfHydraulicStates_ReservoirHeadsAndConsumerDemands}]
    Let $\G = (V,E)$ be a \gls{WDS} and
    $\hm_\Vr$ and $\dm = \dm_\Vc$ 
    known 
    reservoir heads and (consumer) demands, respectively.
    \\There exists exactly one 
    tuple $(\hm_\Vc,\qm)$ 
    of consumer heads and flows 
    such that the triple 
    $(\hm,\qm,\dm) = ([\hm_\Vr^T,\hm_\Vc^T]^T,\qm,\dm)$ 
    of hydraulic states is physically correct \gls{wrt} these side constraints.
\end{theoremNoNb}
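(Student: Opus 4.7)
The strategy is to rewrite the system \eqref{align_HydraulicPrinciple_ConservationOfEnergy_MatrixForm_separatedHeads}--\eqref{align_HydraulicPrinciple_ConservationOfMass_MatrixForm_separatedHeads}, with $\hm_\Vr$ and $\dm$ fixed, as the KKT system of a strictly convex and coercive constrained minimization in the variable $\qm$. The unique primal minimizer will give the flows, and the unique Lagrange multiplier associated with the mass-conservation constraint will give $\hm_\Vc$. The two inputs from earlier in the paper that drive the argument are Lemma \ref{lemma_TheNonLinearityDefinesAStrictilyMonotoneOperator} (strict monotonicity of $f$, equivalently strict convexity of a natural potential) and Theorem \ref{theorem_RankOfSubmatricesOfIncidenceMatrix} (full row rank of $\Bm_{\Vc \cdot}$, hence injectivity of $\Bm_{\Vc \cdot}^T$, non-empty feasibility, and the linear independence constraint qualification).

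\textbf{Existence.} I would work with the optimization problem
\[
\min_{\qm \in \R^{\n{e}}}~ J(\qm) - \hm_\Vr^T \Bm_{\Vr \cdot} \qm \quad \text{subject to} \quad \Bm_{\Vc \cdot} \qm = -2\dm, \qquad J(\qm) := \sum_{e \in E} \tfrac{r_e}{x+1} |q_e|^{x+1}.
\]
A component-wise differentiation yields $\nabla J(\qm) = f(\qm) = \Dm\qm$, so Lemma \ref{lemma_TheNonLinearityDefinesAStrictilyMonotoneOperator} makes $J$ strictly convex. Since $r_e > 0$ and $x + 1 > 1$, $J$ is already coercive on all of $\R^{\n{e}}$, so the full objective is coercive globally (the linear correction cannot offset superlinear growth). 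By Theorem \ref{theorem_RankOfSubmatricesOfIncidenceMatrix} applied to $S = \Vc$, $\im(\Bm_{\Vc \cdot}) = \R^{\n{c}}$, making the affine constraint set non-empty and closed, and Weierstrass produces a unique minimizer $\qm^{*}$. LICQ holds because $\Bm_{\Vc \cdot}^T$ has full column rank, so KKT yields a unique multiplier $\hm_\Vc^{*} \in \R^{\n{c}}$ with $f(\qm^{*}) = \Bm_{\Vr \cdot}^T \hm_\Vr + \Bm_{\Vc \cdot}^T \hm_\Vc^{*}$, which is exactly \eqref{align_HydraulicPrinciple_ConservationOfEnergy_MatrixForm_separatedHeads}, while the constraint is \eqref{align_HydraulicPrinciple_ConservationOfMass_MatrixForm_separatedHeads}. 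Hence $([\hm_\Vr^T,(\hm_\Vc^{*})^T]^T,\dm,\qm^{*})$ is physically correct with respect to the side constraints.

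\textbf{Uniqueness and main obstacle.} For uniqueness of the full pair $(\hm_\Vc, \qm)$ I would argue directly, reusing the monotonicity lemma rather than uniqueness of the minimizer alone. Given two physically correct candidates $(\hm_\Vc^{(i)}, \qm^{(i)})$, both obey $\Bm_{\Vc \cdot}\qm^{(i)} = -2\dm$ and $\Bm_{\Vc \cdot}^T \hm_\Vc^{(i)} = f(\qm^{(i)}) - \Bm_{\Vr \cdot}^T \hm_\Vr$; pairing the difference of the energy equations with $\qm^{(1)} - \qm^{(2)}$ and shifting $\Bm_{\Vc \cdot}^T$ to the other side of the inner product collapses the left-hand side via $\Bm_{\Vc \cdot}(\qm^{(1)} - \qm^{(2)}) = 0$, leaving $\langle f(\qm^{(1)}) - f(\qm^{(2)}), \qm^{(1)} - \qm^{(2)} \rangle = 0$. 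Strict monotonicity (Lemma \ref{lemma_TheNonLinearityDefinesAStrictilyMonotoneOperator}) then forces $\qm^{(1)} = \qm^{(2)}$, after which injectivity of $\Bm_{\Vc \cdot}^T$ (Theorem \ref{theorem_RankOfSubmatricesOfIncidenceMatrix}) forces $\hm_\Vc^{(1)} = \hm_\Vc^{(2)}$. The main obstacle I expect is not the uniqueness argument, which is essentially a two-line manipulation once the strict monotonicity lemma is available, but the existence step: one has to recognize that the Hazen-Williams non-linearity $r_e q_e |q_e|^{x-1}$ is the derivative of a convex $L^{x+1}$-type primitive, which lifts the coupled non-linear system into a standard convex-optimization framework amenable to Weierstrass rather than to a fixed-point or degree-theoretic detour; once this convex reformulation is spotted, the rest is machinery assembled from the earlier theorems.
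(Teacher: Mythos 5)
Your proposal is correct, and your uniqueness argument is essentially identical to the paper's: both pair the difference of the energy equations with $\qm_1 - \qm_2$, use $\Bm_{\Vc \cdot}(\qm_1 - \qm_2) = \zerom{\n{c}}$ to kill the head term, invoke the strict monotonicity of $f$ from lemma \ref{lemma_TheNonLinearityDefinesAStrictilyMonotoneOperator} to conclude $\qm_1 = \qm_2$, and then use the injectivity of $\Bm_{\Vc \cdot}^T$ (theorem \ref{theorem_RankOfSubmatricesOfIncidenceMatrix}) to recover uniqueness of $\hm_\Vc$. The existence half, however, follows a genuinely different route. The paper splits $\qm = \qm_\dm + \qm_{\zerom{}}$ with $\qm_\dm$ a particular solution of $\Bm_{\Vc \cdot} \qm = -2\dm$ and $\qm_{\zerom{}} \in \ker(\Bm_{\Vc \cdot})$, defines the projected non-linear map $\Psi(\qm_{\zerom{}}) = \Pi\bigl(f(\qm_\dm + \qm_{\zerom{}}) - \Bm_{\Vr \cdot}^T \hm_\Vr\bigr)$ on the kernel, and proves $\Psi$ continuous and coercive so that a Brouwer-type surjectivity theorem (theorem \ref{theorem_GeneralExistenceTheoremInFiniteDimensionalSpaces}) yields a zero of $\Psi$, from which $\hm_\Vc$ is recovered because $\ker(\Pi) = \im(\Bm_{\Vc \cdot}^T)$; the coercivity estimate for $\Psi$ is the technically heaviest part of the paper's proof. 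You instead observe that $f = \nabla J$ for the strictly convex, superlinearly coercive potential
\begin{align*}
    J(\qm) = \sum_{e \in E} \tfrac{r_e}{x+1} |q_e|^{x+1},
\end{align*}
minimize $J(\qm) - \hm_\Vr^T \Bm_{\Vr \cdot} \qm$ over the non-empty affine set $\{\Bm_{\Vc \cdot} \qm = -2\dm\}$, and read off $\hm_\Vc$ as the (unique, by full row rank of $\Bm_{\Vc \cdot}$) Lagrange multiplier; the stationarity condition $\nabla J(\qm^*) - \Bm_{\Vr \cdot}^T \hm_\Vr \in \ker(\Bm_{\Vc \cdot})^{\perp} = \im(\Bm_{\Vc \cdot}^T)$ is exactly equation \eqref{align_HydraulicPrinciple_ConservationOfEnergy_MatrixForm_separatedHeads}. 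Both arguments rest on the same two pillars (strict monotonicity of $f$ and the rank theorem), but your variational formulation replaces the paper's hand-crafted coercivity estimate for $\Psi$ with the one-line coercivity of $J$, and it additionally delivers uniqueness of $\qm^*$ for free from strict convexity; the paper's monotone-operator route is more flexible in that it does not require $f$ to be a gradient field, which would matter if the head-loss law were replaced by a non-symmetric non-linearity. One minor point of care: the multiplier you obtain exists without any constraint qualification since the constraints are affine, so the appeal to LICQ is needed only for its uniqueness, which you could equally get from the injectivity of $\Bm_{\Vc \cdot}^T$ as in your uniqueness step.
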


\begin{proof}
    Given 
    reservoir heads $\hm_\Vr \in \R^\n{r}$ and 
    (consumer) demands $\dm_\Vc \in \R^\n{c}$, 
    Equation \eqref{align_HydraulicPrinciples_ConservationOfEnergy_MatrixForm_separatedHeads} and
    \eqref{align_HydraulicPrinciples_ConservationOfMass_MatrixForm_separatedHeads} 
    can be transformed into the \gls{SNLE}
    \begin{align}
    \label{align_SNLE_ConsumerHeadsAndFlowsUnkown}
        \underbrace{
        \begin{array}{c@{\hspace{-2pt}}@{\hspace{-2pt}}c}
            \raisebox{-2pt}{
            \text{\tiny $\overbrace{\phantom{................}}^{\n{e} \times \n{c}}$}
            }
            & 
            \raisebox{-2pt}{
            \text{\tiny $\overbrace{\phantom{............}}^{\n{e} \times \n{e}}$}
            }
            \\
            \multicolumn{2}{c}{
            \begin{bmatrix}
                \Bm_{\Vc \cdot}^T    & -\Dm\\
                \Zerom{\n{c}}{\n{c}} & \Bm_{\Vc \cdot}
            \end{bmatrix}
            } 
            \\
            \raisebox{7pt}{
            \text{\tiny $\underbrace{\phantom{................}}_{\n{c} \times \n{c}}$}
            }
            & 
            \raisebox{7pt}{
            \text{\tiny $\underbrace{\phantom{............}}_{\n{c} \times \n{e}}$}
            }
        \end{array}
        }_{
        \in \R^{(\n{e}+\n{c}) \times (\n{c}+\n{e})}
        }
        \cdot
        \underbrace{
        \begin{array}{c}
            \raisebox{-2pt}{
            \text{\tiny $\overbrace{\phantom{..........}}^{\n{c} \times 1}$}
            }
            \\
            \begin{bmatrix}
                \hm_{\Vc} \\
                \qm
            \end{bmatrix}
            \\
            \raisebox{7pt}{
            \text{\tiny $\underbrace{\phantom{..........}}_{\n{e} \times 1}$}
            }
        \end{array}
        }_{
        \in \R^{\n{c}+\n{e}}
        }
        =
        \underbrace{
        \begin{array}{c@{\hspace{0pt}}@{\hspace{0pt}}c}
            \raisebox{-2pt}{
            \text{\tiny \color{white} $\overbrace{\phantom{.....}}^{\n{e} \times 1}$}
            }
            & 
            \\
            \multirow{2}{*}{$
            \begin{bmatrix}
                - \Bm_{\Vr \cdot}^T \hm_\Vr \\
                - 2 \dm
            \end{bmatrix}
            $}
            & 
            \text{$\}$ \tiny $\n{e} \times 1$}
            \\
            & 
            \text{$\}$ \tiny $\n{c} \times 1$}
            \\
            \raisebox{7pt}{
            \text{\tiny \color{white} $\underbrace{\phantom{.....}}_{\n{c} \times 1}$}
            }
            &
        \end{array}
        }_{
        \in \R^{\n{e}+\n{c}}
        }
        .
    \end{align}

    \noindent 
    \textbf{Existence:}
    \textit{Step 1:}
    \textit{There exists a $\qm_\dm \in \R^\n{e}$ such that $\Bm_{\Vc \cdot} \qm_\dm = - 2\dm$ holds.}

    Since by Theorem \ref{theorem_RankOfSubmatricesOfTheIncidenceMatrix}, $\rk(\Bm_{\Vc \cdot}) = \n{c}$ holds,
    by basic results from linear algebra, the matrix $\Bm_{\Vc \cdot} \in \R^{\n{c} \times \n{e}}$ induces a surjective linear map $\R^\n{e} \rightarrow \R^{\n{c}}, ~ \qm \mapsto \Bm_{\Vc \cdot} \qm$. 
    Consequently, $- 2\dm \in \R^\n{c} = \im(\Bm_{\Vc \cdot})$ holds and thus, there exists a $\qm_\dm \in \R^\n{e}$ such that $\Bm_{\Vc \cdot} \qm_\dm = - 2\dm$ holds.

    \begin{remark}[Uniqueness comes only from the coupled systems of equations]
    \label{remark_OnTheConservationOfMass}
        By $\n{c} = \rk(\Bm_{\Vc \cdot}) \leq \min\{\n{c},\n{e}\} \leq \n{c}$, $\min\{\n{c},\n{e}\} = \n{c}$ and thus, $\n{e} \geq \n{c}$ follows. 
        If $\n{e} = \n{c}$ holds, the linear map is not only surjective, but bijective.
        However, in the usual case where $\n{e} > \n{c}$ holds, the linear map can not be injective. 
        Intuitively, the kernel $\ker(\Bm_{\Vc \cdot})$ of $\Bm_{\Vc \cdot}$ consists of all flows $\qm$ which flow in cycles through the \gls{WDS} without any sources and sinks. However, physically, such cycles would contradict the fact that water flows along decreasing heads (cf. Section \ref{subsection_WaterHydraulics_HydraulicPrinciples}).
        Therefore, the uniqueness of a $\qm$ that solves the \gls{SNLE} \eqref{align_SNLE_ConsumerHeadsAndFlowsUnkown} comes only in combination with the second set of equations.
    \end{remark}

    \noindent 
    \textit{Step 2:}
    \textit{Given $\qm_\dm$ from Step 1,
    there exist $\qm_\zerom{} \in \ker(\Bm_{\Vc \cdot})$ and $\hm_\Vc \in \R^\n{c}$, such that}
    \begin{align}
    \label{align_HydraulicPrinciples_ConservationOfEnergy_MatrixForm_separatedHeads_modified}
        \Bm_{\Vc \cdot}^T \hm_\Vc - \Dm (\qm_\dm + \qm_\zerom{}) 
        = 
        - \Bm_{\Vr \cdot}^T \hm_\Vr
    \end{align}

    \noindent \textit{holds.}
    
    In order to prove the claim, we make use of the orthogonal projection
    \begin{align*}
        \Pi: \R^\n{e} \longrightarrow \ker(\Bm_{\Vc \cdot}),~
        \qm \mapsto \Pi(\qm)
    \end{align*}

    \noindent from the (euclidean) vector space $\R^\n{e}$ onto the sub-vector space $\ker(\Bm_{\Vc \cdot}) \subset \R^\n{e}$. The following results are well-known results from linear algebra:
    
    \begin{enumerate}
        \item $\Pi$ is linear,

        \item $\Pi$ is self-adjoint, that is $\langle \Pi(\qm_1),\qm_2 \rangle = \langle \qm_1,\Pi(\qm_2) \rangle$ holds for all $\qm_1,\qm_2 \in \R^\n{e}$,

        \item $\Pi(\qm) = \qm$ \gls{iff} $\qm \in \ker(\Bm_{\Vc \cdot})$ and

        \item $\im(\Pi) = \ker(\Bm_{\Vc \cdot})$ and $\ker(\Pi) = \ker(\Bm_{\Vc \cdot})^\perp = \im(\Bm_{\Vc \cdot}^T)$. 
    \end{enumerate}

    \noindent Based on the orthogonal projection $\Pi$ and the \gls{SNLE} \eqref{align_HydraulicPrinciples_ConservationOfEnergy_MatrixForm_separatedHeads_modified}, we define the function
    \begin{align*}
        \Psi: \ker(\Bm_{\Vc \cdot}) \longrightarrow \ker(\Bm_{\Vc \cdot}),~
        \qm_\zerom{} \mapsto \Pi(f(\qm_\dm + \qm_\zerom{}) - \Bm_{\Vr \cdot}^T \hm_\Vr).
    \end{align*}

    \noindent 
    \textit{Step 2.1:}
    \textit{It sufficies to show that $\Psi$ is surjective.}
    
    Since clearly, $\zerom{\n{e}} \in \ker(\Bm_{\Vc \cdot})$ holds, 
    if $\Psi$ is surjective, 
    there exists a $\qm_\zerom{} \in \ker(\Bm_{\Vc \cdot})$ 
    such that $\Psi(\qm_\zerom{}) = \Pi(f(\qm_\dm + \qm_\zerom{}) - \Bm_{\Vr \cdot}^T \hm_\Vr) = \zerom{\n{e}}$ holds. 
    Consequently, 
    $f(\qm_\dm + \qm_\zerom{}) - \Bm_{\Vr \cdot}^T \hm_\Vr \in \ker(\Pi) = \ker(\Bm_{\Vc \cdot})^\perp = \im(\Bm_{\Vc \cdot}^T)$ holds. 
    That is, there exists a $\hm_\Vc \in \R^\n{c}$, such that 
    \begin{align*}
        \Bm_{\Vc \cdot}^T \hm_\Vc 
        = 
        f(\qm_\dm + \qm_\zerom{}) - \Bm_{\Vr \cdot}^T \hm_\Vr
        =
        \Dm (\qm_\dm + \qm_\zerom{}) - \Bm_{\Vr \cdot}^T \hm_\Vr
    \end{align*}

    \noindent holds.
    Therefore, if $\Psi$ is surjective, 
    there exist $\qm_\zerom{} \in \ker(\Bm_{\Vc \cdot})$ and $\hm_\Vc \in \R^\n{c}$, such that \eqref{align_HydraulicPrinciples_ConservationOfEnergy_MatrixForm_separatedHeads_modified} holds.
    \\
    \\\textit{Step 2.2.:}
    \textit{$\Psi$ is surjective.}
    
    We show the surjectivity of $\Psi$ using the following theorem:

    \begin{theorem}[General existence theorem in finite dimensional spaces (Theorem 16.4 in \cite{Schweizer2013OrdinaryAndPartialDifferentialEquations_german})]
    \label{theorem_GeneralExistenceTheoremInFiniteDimensionalSpaces}
        Let $\Psi: \R^n \rightarrow \R^n$ be continuous and coercive, that is,
        \begin{align*}
            \lim_{||\xm||_2 \rightarrow \infty}
            \frac{
            \langle \Psi(\xm) , \xm \rangle
            }{
            ||\xm||_2
            }
            =
            \infty.
        \end{align*}

        \noindent Then $\Psi$ is surjective, i.e., for each $\bm \in \R^n$ there exists a $\xm \in \R^n$ such that $\Psi(\xm) = \bm$ holds.
    \end{theorem}

    \noindent As a subspace of $\R^\n{e}$, $\ker(\Bm_{\Vc \cdot}) \subset \R^\n{e}$ is itself an $n$-dimensional (euclidean) vector space with -- using the rank-nullity theorem and the previous observation $\rk(\B_{\Vc \cdot}) = \n{c}$ -- dimension $n = \dim(\ker(\Bm_{\Vc \cdot})) = \n{e} - \dim(\im(\Bm_{\Vc \cdot})) = \n{e} - \rk(\Bm_{\Vc \cdot}) = \n{e} - \n{n}$.
    Therefore, we can apply Theorem \ref{theorem_GeneralExistenceTheoremInFiniteDimensionalSpaces} to $\Psi: \ker(\Bm_{\Vc \cdot}) \longrightarrow \ker(\Bm_{\Vc \cdot})$ as defined above.

    \textit{$\Psi$ is continuous:}
    Since $\Pi$ is linear, it is continuous. Moreover, $f$ is continuous. Therefore, so is $\Psi$ as a composition of continuous functions.

    \textit{$\Psi$ is coercive:}
    By
    (1) linearity of the scalar product,
    for any $\qm_\zerom{} \in \ker(\Bm_{\Vc \cdot})$ with $||\qm_\zerom{}||_2 \neq 0$,
    we obtain

    \begin{align*}
        \frac{
        \langle \Psi(\qm_\zerom{}) , \qm_\zerom{} \rangle
        }{
        ||\qm_\zerom{}||_2
        }
        \overset{\text{(1)}}{=}&~
        \frac{
        \langle \Psi(\qm_\zerom{}) - \Psi(\zerom{\n{e}}), \qm_\zerom{} \rangle
        }{
        ||\qm_\zerom{}||_2
        }
        +
        \frac{
        \langle \Psi(\zerom{\n{e}}), \qm_\zerom{} \rangle
        }{
        ||\qm_\zerom{}||_2
        }.
    \end{align*}

    \noindent 
    By
    (1) definition of $\Psi$,
    (2) linearity of $\Pi$,
    (3) self-adjointness of $\Pi$,
    (4) the fact that $\Pi_{| \ker(\Bm_{\Vc \cdot})} = \id$ holds,
    (5) basic transformations and
    (6) Lemma \ref{lemma_TheNonLinearityDefinesAStrictilyMonotoneOperator} with $\qm_1 = \qm_\dm + \qm_\zerom{} \in \R^\n{e}$ and $\qm_2 = \qm_\dm \in \R^\n{e}$, 
    we obtain
    
    \begin{align*}
        \frac{
        \langle \Psi(\qm_\zerom{}) - \Psi(\zerom{\n{e}}), \qm_\zerom{} \rangle
        }{
        ||\qm_\zerom{}||_2
        }
        \overset{\text{(1)}}{=}&~
        \frac{
        \langle ~
        \Pi(f(\qm_\dm + \qm_\zerom{}) - \Bm_{\Vr \cdot}^T \hm_\Vr)
        -
        \Pi(f(\qm_\dm + \zerom{\n{e}}) - \Bm_{\Vr \cdot}^T \hm_\Vr)
        ~,~ 
        \qm_\zerom{} 
        ~ \rangle
        }{
        ||\qm_\zerom{}||_2
        }
        \\
        \overset{\text{(2)}}{=}&~
        \frac{
        \langle ~
        \Pi(f(\qm_\dm + \qm_\zerom{}) - f(\qm_\dm)) 
        ~,~ 
        \qm_\zerom{}
        ~ \rangle
        }{
        ||\qm_\zerom{}||_2
        }
        \\
        \overset{\text{(3)}}{=}&~
        \frac{
        \langle ~
        f(\qm_\dm + \qm_\zerom{}) - f(\qm_\dm)
        ~,~ 
        \Pi(\qm_\zerom{})
        ~ \rangle
        }{
        ||\qm_\zerom{}||_2
        }
        \\
        \overset{\text{(4)}}{=}&~
        \frac{
        \langle ~
        f(\qm_\dm + \qm_\zerom{}) - f(\qm_\dm)
        ~,~ 
        \qm_\zerom{}
        ~ \rangle
        }{
        ||\qm_\zerom{}||_2
        }
        \\
        \overset{\text{(5)}}{=}&~
        \frac{
        \langle ~
        f(\qm_\dm + \qm_\zerom{}) - f(\qm_\dm)
        ~,~ 
        (\qm_\dm + \qm_\zerom{}) - \qm_\dm
        ~ \rangle
        }{
        ||\qm_\zerom{}||_2
        }
        \\
        \overset{\text{(6)}}{=}&~
        \frac{1}{||\qm_\zerom{}||_2}
        \sum_{e \in E}
        x ~ r_e ~ q_{\zerom{} e}^2
        \int_{0}^{1}
        |q_{\dm e} + t ~q_{\zerom{} e}|^{x-1}
        ~dt.
    \end{align*}

    \noindent Let us choose $e = \argmax_{e \in E} |q_{\zerom{} e}|$. 
    Then if $||\qm_\zerom{}||_2 = \left( \sum_{e \in E} |q_{\zerom e}|^2 \right)^{\frac{1}{2}} \rightarrow \infty$ holds, so does $|q_{\zerom e}| \rightarrow \infty$.
    Therefore, \gls{wlog}, we can assume that $|\frac{1}{4} q_{\zerom{} e}| > |q_{\dm e}|$ holds.
    
    Consequently, by
    (1) basic transformations,
    (2) making usage of the maximum $|q_{\zerom e}| = \max_{e \in E} |q_{\zerom e}|$,
    (3) reversed triangle inequaility,
    (4) the fact that 
    $t |q_{\zerom{} e}| - |q_{\dm e}| > t |q_{\zerom{} e}| - \frac{1}{4} |q_{\zerom{} e}| > 0$  
    holds for $|q_{\zerom{} e}|$ large enough and all $t \in [\frac{1}{2},1]$ and
    (5) the fact that $t^x > t$ holds for all $t,x \in (0,1)$ and
    we obtain
    \begin{align*}
        \frac{
        \langle \Psi(\qm_\zerom{}) - \Psi(\zerom{\n{e}}), \qm_\zerom{} \rangle
        }{
        ||\qm_\zerom{}||_2
        }
        =&~
        \frac{1}{||\qm_\zerom{}||_2}
        \sum_{e \in E}
        x ~ r_e ~ q_{\zerom{} e}^2
        \int_{0}^{1}
        |q_{\dm e} + t ~q_{\zerom{} e}|^{x-1}
        ~dt
        \\
        \overset{\text{(1)}}{\geq}&~
        \frac{1}{||\qm_\zerom{}||_2}
        \sum_{e \in E}
        x ~ r_e ~ q_{\zerom{} e}^2
        \int_{\frac{1}{2}}^{1}
        |t ~q_{\zerom{} e} + q_{\dm e}|^{x-1}
        ~dt
        \\
        \overset{\text{(2)}}{\geq}&~
        \frac{
        x ~ r_e ~ q_{\zerom{} e}^2
        }{
        \n{e}^{\frac{1}{2}} ~ |q_{\zerom{} e}|
        }
        \int_{\frac{1}{2}}^{1}
        |t ~q_{\zerom{} e} + q_{\dm e}|^{x-1}
        ~dt
        \\
        \overset{\text{(3)}}{\geq}&~
        \frac{
        x ~ r_e ~ q_{\zerom{} e}^2
        }{
        \n{e}^{\frac{1}{2}} ~ |q_{\zerom{} e}|
        }
        \int_{\frac{1}{2}}^{1}
        \big| ~ 
        |t ~q_{\zerom{} e}| 
        - 
        |q_{\dm e}| 
        ~ \big|^{x-1}
        ~dt
        \\
        \overset{\text{(4)}}{>}&~
        \frac{
        x ~ r_e ~ q_{\zerom{} e}^2
        }{
        \n{e}^{\frac{1}{2}} ~ |q_{\zerom{} e}|
        }
        \int_{\frac{1}{2}}^{1}
        \left| ~ 
        t |q_{\zerom{} e}| 
        - 
        \frac{1}{4} |q_{\zerom{} e}|
        ~ \right|^{x-1}
        ~dt
        \\
        \overset{\text{(1)}}{=}&~
        \frac{
        x ~ r_e ~ q_{\zerom{} e}^2 ~ |q_{\zerom{} e}|^{x-1} 
        }{
        \n{e}^{\frac{1}{2}} ~ |q_{\zerom{} e}|
        }
        \int_{\frac{1}{2}}^{1}
        \left( t - \frac{1}{4} \right)^{x-1} 
        ~dt
        \\
        \overset{\text{(5)}}{>}&~
        \frac{
        x ~ r_e ~ q_{\zerom{} e}^2 ~ |q_{\zerom{} e}|^{x-1} 
        }{
        \n{e}^{\frac{1}{2}} ~ |q_{\zerom{} e}|
        }
        \int_{\frac{1}{2}}^{1}
        \left( t - \frac{1}{4} \right)
        ~dt
        \\
        \overset{\text{(1)}}{=}&~
        \frac{
        x ~ r_e ~ |q_{\zerom{} e}|^x
        }{
        4 \n{e}^{\frac{1}{2}}
        }.
    \end{align*}

    \noindent Since the right-hand side converges towards infinity as $||\qm_\zerom{}||_2 \rightarrow \infty$, we can conclude that
    \begin{align*}
        \lim_{||\qm_\zerom{}||_2 \rightarrow \infty}
        \frac{
        \langle \Psi(\qm_\zerom{}) - \Psi(\zerom{\n{e}}), \qm_\zerom{} \rangle
        }{
        ||\qm_\zerom{}||_2
        }
        =
        \infty
    \end{align*}

    \noindent holds. Therefore, it suffices to show that the remaining term does not converge towards minus infinity. Indeed, by
    Cauchy-Schwarz inequality,
    \begin{align*}
        \Big |
        \frac{
        \langle \Psi(\zerom{\n{e}}), \qm_\zerom{} \rangle
        }{
        ||\qm_\zerom{}||_2
        }
        \Big |
        = 
        \frac{
        | \langle \Psi(\zerom{\n{e}}), \qm_\zerom{} \rangle |
        }{
        ||\qm_\zerom{}||_2
        }
        \leq 
        \frac{
        || \Psi(\zerom{\n{e}}) ||_2 \cdot || \qm_\zerom{} ||_2
        }{
        ||\qm_\zerom{}||_2
        }
        = 
        || \Psi(\zerom{\n{e}}) ||_2
        < 
        \infty
    \end{align*}

    \noindent holds.
    In summary, $\Psi$ is continuous and coercive, and therefore, by Theorem \ref{theorem_GeneralExistenceTheoremInFiniteDimensionalSpaces}, it is surjective.
    \\
    \\\textit{Step 3:}
    \textit{
    Given $\qm_\dm$ from Step 1 and $\qm_\zerom{} \in \ker(\Bm_{\Vc \cdot}) \subset \R^\n{e}$ and $\hm_{\Vc \cdot} \in \R^\n{c}$ from Step 2,
    the vector $(\hm_\Vc, \qm) := (\hm_\Vc, \qm_\dm + \qm_\zerom{})$ is solution to the \gls{SNLE} \eqref{align_SNLE_ConsumerHeadsAndFlowsUnkown}.}

    The vector $(\hm_\Vc, \qm) := (\hm_\Vc, \qm_\dm + \qm_\zerom{})$ satisfies
    \begin{align*}
        \Bm_{\Vc \cdot}^T \hm_\Vc - \Dm \qm
        =
        \Bm_{\Vc \cdot}^T \hm_\Vc - \Dm (\qm_\dm + \qm_\zerom{}) 
        =
        - \Bm_{\Vr \cdot}^T \hm_\Vr
    \end{align*}

    \noindent by \textit{Step 2} and 
    \begin{align*}
        \Bm_{\Vc \cdot} \qm 
        =
        \Bm_{\Vc \cdot} (\qm_\dm + \qm_\zerom{}) 
        =
        \Bm_{\Vc \cdot} \qm_\dm +  \Bm_{\Vc \cdot} \qm_\zerom{}
        =
        - 2\dm + \zerom{\n{c}}
        = 
        - 2\dm
    \end{align*}

    \noindent by \textit{Step 1} and the fact that $\qm_\zerom{} \in \ker(\Bm_{\Vc \cdot})$ holds, and therefore is a solution to the \gls{SNLE} \eqref{align_SNLE_ConsumerHeadsAndFlowsUnkown}.
    \\
    \\In conclusion, 
    the tuple
    $(\hm_\Vc,\qm) := (\hm_\Vc,\qm_\dm + \qm_\zerom{})$ 
    solves the 
    \gls{SNLE} \eqref{align_SNLE_ConsumerHeadsAndFlowsUnkown}
    and therefore, 
    the triple 
    $(\hm,\qm,\dm) = ([\hm_\Vr^T,\hm_\Vc^T]^T,\qm_\dm + \qm_\zerom{},\dm)$ 
    is physically correct \gls{wrt} the given side constraints.
    \\
    \\
    Before investigating on the uniqueness, let us summarize some general results regarding the solution $\qm_\dm$ of the \gls{SLE} $\Bm_{\Vc \cdot} \qm = - 2\dm$ from \textit{Step 1}:

    By basic results from linear algebra, we know that the solution space $L(\Bm_{\Vc \cdot},- 2\dm)$ of the \gls{SLE} $\Bm_{\Vc \cdot} \qm = - 2\dm$ can be written as
    \begin{align*}
        L(\Bm_{\Vc \cdot},- 2\dm)
        =
        \qm_\dm + \ker(\Bm_{\Vc \cdot})
        \subset 
        \R^\n{e}.
    \end{align*}
    
    \noindent Even more, it is a well-known result that the linear map $\R^\n{e} \rightarrow \R^{\n{c}}, ~ \qm \mapsto \Bm_{\Vc \cdot} \qm$ induces a decomposition of the vector space $\R^\n{e}$ as the direct sum
    \begin{align*}
        \R^\n{e} = \ker(\Bm_{\Vc \cdot}) \oplus \im(\Bm_{\Vc \cdot}^T)
    \end{align*}

    \noindent of the kernel $\ker(\Bm_{\Vc \cdot})$ of the matrix $\Bm_{\Vc \cdot}$ and the image $\im(\Bm_{\Vc \cdot}^T)$ of the transposed matrix $\Bm_{\Vc \cdot}^T$.
    As a consequence of the above considerations, each $\qm \in \R^\n{e}$ can be written as 
    \begin{align*}
        \qm = \qm_\dm + \qm_\zerom{}
        \text{ with }
        \qm_\dm \in \im(\Bm_{\Vc \cdot}^T) \perp \ker(\Bm_{\Vc \cdot})
        \text{ and }
        \qm_\zerom{} \in \ker(\Bm_{\Vc \cdot}).
    \end{align*}

    \noindent This observation allows us to investigate in the uniqueness of the solution found above.
    \\
    \\\textbf{Uniqueness:}
    Let us assume 
    that there exist two
    tuples $(\hm_{1 \Vc},\qm_{1}) \neq (\hm_{2 \Vc},\qm_{2})$ 
    of consumer heads and flows 
    such that the triples 
    $([\hm_\Vr^T,\hm_{1 \Vc}^T]^T,\qm_1,\dm)$ and $([\hm_\Vr^T,\hm_{2 \Vc}^T]^T,\qm_2,\dm)$ 
    of hydraulic states are physically correct \gls{wrt} these side constraints.
    
    As elaborated above, for $\qm_1$ and $\qm_2$ there exist $\qm_{\zerom{} 1} \in \ker(\Bm_{\Vc \cdot})$ and $\qm_{\zerom{} 2} \in \ker(\Bm_{\Vc \cdot})$ such that 
    \begin{align*}
        \qm_1 = \qm_\dm + \qm_{\zerom{} 1} \text{ and }
        \qm_2 = \qm_\dm + \qm_{\zerom{} 2}
    \end{align*}

    \noindent holds. Moreover, since  $([\hm_\Vr^T,\hm_{1 \Vc}^T]^T,\qm_1,\dm)$ and $([\hm_\Vr^T,\hm_{2 \Vc}^T]^T,\qm_2,\dm)$ are physically correct,
    \begin{align*}
        \Bm_{\Vc \cdot} \qm_1 
        = 
        - 2\dm
        =
        \Bm_{\Vc \cdot} \qm_2 \text{ and }
        \Bm_{\Vc \cdot}^T \hm_{1 \Vc} - \Dm(\qm_1) \qm_1 
        =
        - \Bm_{\Vr \cdot}^T \hm_\Vr
        =
        \Bm_{\Vc \cdot}^T \hm_{2 \Vc} - \Dm(\qm_2) \qm_2 
    \end{align*}

    \noindent and therefore, especially, by definition of the non-linearity $f$ (Lemma \ref{lemma_TheNonLinearityDefinesAStrictilyMonotoneOperator}),
    \begin{align*}
        f(\qm_1)  - f(\qm_2) 
        =
        \Dm(\qm_1) \qm_1  - \Dm(\qm_2) \qm_2 
        =
        \Bm_{\Vc \cdot}^T \hm_{1 \Vc} -  \Bm_{\Vc \cdot}^T \hm_{2 \Vc}
        =
        \Bm_{\Vc \cdot}^T (\hm_{1 \Vc} - \hm_{2 \Vc})
    \end{align*}

    \noindent holds.
    \\
    \\\textit{Step 1:} If $\qm_1 \neq \qm_2$ holds,
    by
    (1) the considerations above,
    (2) basic transformations and
    (3) the fact that $\qm_{\zerom{} 1}, \qm_{\zerom{} 2} \in \ker(\Bm_{\Vc \cdot})$ holds,
    we obtain
    \begin{align*}
        \langle 
        f(\qm_1) - f(\qm_2), \qm_1 - \qm_2
        \rangle
        \overset{\text{(1)}}{=}&~
        \langle 
        \Bm_{\Vc \cdot}^T (\hm_{1 \Vc} - \hm_{2 \Vc}) , \qm_{\zerom{} 1} - \qm_{\zerom{} 2}
        \rangle
        \\
        \overset{\text{(2)}}{=}&~
        \langle 
        \hm_{1 \Vc} - \hm_{2 \Vc} , \Bm_{\Vc \cdot} (\qm_{\zerom{} 1} - \qm_{\zerom{} 2})
        \rangle
        \\
        \overset{\text{(3)}}{=}&~
        \langle 
        \hm_{1 \Vc} - \hm_{2 \Vc} , \zerom{\n{c}}
        \rangle
        \\
        \overset{\text{(2)}}{=}&~
        0,
    \end{align*}

    \noindent a contradiction to the strict monotonicity of the non-linearity $f$ (cf. Lemma \ref{lemma_TheNonLinearityDefinesAStrictilyMonotoneOperator}).
    Therefore, $\qm_1 = \qm_2$ must hold.
    \\
    \\\textit{Step 2:} If $\hm_{1 \Vc} \neq \hm_{2 \Vc}$ holds,
    by 
    (1) the considerations above and
    (2) the fact that $\qm_1 = \qm_2$ holds by \textit{Step 1},
    we obtain
    \begin{align*}
        \Bm_{\Vc \cdot}^T \hm_{1 \Vc}
        \overset{\text{(1)}}{=}~
        f(\qm_1) - \Bm_{\Vr \cdot}^T \hm_\Vr
        \overset{\text{(1)}}{=}~
        f(\qm_2) - \Bm_{\Vr \cdot}^T \hm_\Vr
        \overset{\text{(1)}}{=}~
        \Bm_{\Vc \cdot}^T \hm_{2 \Vc}.
    \end{align*}

    \noindent Consequently, $\hm_{1 \Vc}, \hm_{2 \Vc} \in \im(\Bm_{\Vc \cdot}^T)$ have the same image under $\Bm_{\Vc \cdot}^T$. 
    However, 
    since by Theorem \ref{theorem_RankOfSubmatricesOfTheIncidenceMatrix}, $\rk(\Bm_{\Vc \cdot}) = \n{c}$ holds,
    by basic results from linear algebra, the matrix $\Bm_{\Vc \cdot}^T \in \R^{\n{e} \times \n{c}}$ induces an injective linear map $\R^\n{c} \longrightarrow \R^\n{e}, \hm_\Vc \longmapsto \Bm_{\Vc \cdot}^T \hm_\Vc$. 
    Therefore, $\hm_{1 \Vc} = \hm_{2 \Vc}$ must hold.
\end{proof}

\end{document}